%
%
\NeedsTeXFormat{LaTeX2e}

\documentclass{scrartcl}

\usepackage[utf8]{inputenc}
\usepackage[english]{babel}
\usepackage{amsmath,amssymb}
\usepackage{amsthm,enumerate}
\usepackage[all]{xy}
\usepackage{multirow}
\usepackage{authblk}
\usepackage{mathrsfs}
\usepackage{hyperref}
\pdfstringdefDisableCommands{%
}
\usepackage{microtype}
\usepackage{enumitem}
\setlist[enumerate,1]{label=\textup{(\alph*)}}

\newtheorem{theorem}{Theorem}[section]
\newtheorem{proposition}[theorem]{Proposition}
\newtheorem{lemma}[theorem]{Lemma}
\newtheorem{corollary}[theorem]{Corollary}

\theoremstyle{definition}
\newtheorem{definition}[theorem]{Definition}
\newtheorem{remark}[theorem]{Remark}{}
{}
\newtheorem{notation}[theorem]{Notation}{}

\newcommand{\F}{\mathbb{F}}
\newcommand{\Z}{\mathbb{Z}}
\newcommand{\gen}[1]{\langle #1 \rangle}
\newcommand{\FF}{\mathcal{F}}
\newcommand{\EE}{\mathcal{E}}
\newcommand{\LL}{\mathcal{L}}
\newcommand{\TT}{\mathcal{T}}
\newcommand{\foc}{\mathfrak{foc}}
\newcommand{\g}{\mathcal{G}}
\newcommand{\pp}{\mathcal{P}}
\newcommand{\hh}{\mathcal{H}}

\newcommand{\bb}{\mathcal{B}}
\newcommand{\definicio}{\stackrel{\text{def}} = }
\newcommand{\setp}{\mathfrak{X}}
\newcommand{\functor}{(-)^{\bullet}}

\newcommand{\GL}{\operatorname{GL}\nolimits}
\newcommand{\SL}{\operatorname{SL}\nolimits}
\newcommand{\Id}{\operatorname{Id}\nolimits}
\newcommand{\Hom}{\operatorname{Hom}\nolimits}
\newcommand{\Inn}{\operatorname{Inn}\nolimits}
\newcommand{\Aut}{\operatorname{Aut}\nolimits}
\newcommand{\Out}{\operatorname{Out}\nolimits}
\newcommand{\Syl}{\operatorname{Syl}\nolimits}
\newcommand{\Iso}{\operatorname{Iso}\nolimits}
\newcommand{\incl}{\operatorname{incl}\nolimits}
\newcommand{\Mor}{\operatorname{Mor}\nolimits}
\newcommand{\Map}{\operatorname{Map}\nolimits}
\newcommand{\Ob}{\operatorname{Ob}\nolimits}
\newcommand{\Ker}{\operatorname{Ker}\nolimits}
\newcommand{\rk}{\operatorname{rk}\nolimits}

\numberwithin{equation}{section}


\title{Some new examples of simple $p$-local compact groups}
\author{
	Alex Gonz\'alez\\
	\texttt{agondem@gmail.com}
	\and
	Toni Lozano\\
	\texttt{tonilb@mat.uab.cat}\footnote{Second author has been supported by MICINN grant BES-2011-044403.}
	\and
	Albert Ruiz\\
	\texttt{albert@mat.uab.cat}\footnote{All authors are partially supported by MICINN-FEDER project number MTM2016-80439-P.}
}
\date{}

\begin{document}

\maketitle

\begin{abstract}
In this paper we present new examples of simple $p$-local compact groups for all odd primes.
We also develop the necessary tools to show saturation, simpleness and the non-realizability as $p$-compact groups or compact Lie groups, which can be applied in a more general framework.
 \end{abstract}

\section{Introduction}
In \cite{BLO3}, C. Broto, R. Levi and B. Oliver defined the concept of $p$-local compact group: given a prime number $p$, a $p$-local compact group is a triple $(S,\FF,\LL)$ where $S$ is a discrete $p$-toral group, $\FF$ a saturated fusion system over $S$ and $\LL$ a centric linking associated to $\FF$. In \cite{BLO3}, the authors also prove that compact Lie groups and $p$-compact groups provide examples of $p$-local compact groups: given $G$ a compact Lie group (respectively a $p$-compact group $X$), there is a $p$-local compact group structure $(S,\FF,\LL)$ together with an inclusion $S\leq G$ (respectively a map $Bf\colon BS \to BX$) which is a Sylow $p$-subgroup, such that $\FF$ is the fusion system over $S$ induced by $G$ (respectively by $X$) and $\LL$ is a centric linking system associated to $\FF$. Moreover, the same authors also prove that, in these cases, $|\LL|^\wedge_p \simeq BG^\wedge_p$ (respectively $|\LL|^\wedge_p \simeq BX$).

We are interested in proving the existence of simple $p$-local compact groups (see Definition \ref{defisimple}) which do not correspond neither to compact Lie groups, nor to $p$-compact groups. The way we use to get examples of this kind is looking to the already known exotic $p$-local finite groups, identifying an abelian subgroup which plays the role of a torus, and checking if there is a way to consider a $p$-local compact group with this structure. An example with this property is the $3$-local finite group labelled as $\FF(3^{2k+1},3)$ in \cite[Table 6]{DRV}. This example gives a possible structure of a $3$-local finite group and, moreover, we have been able to fit this case in a family which can be defined for every odd prime $p$.

To state the main result of the paper we need some notation: given a prime $p$, let $C_p$ be the cyclic group of order $p$ (in multiplicative notation), and let $\Z/p^{\infty}$ denote the union of all the cyclic groups $\Z/p^n$ (in additive notation).

\begin{theorem}
	Let $p$ be an odd prime number. Consider the action of $C_p$ on $T\definicio(\Z/p^{\infty})^{p-1}$ given by matrix $B$ in Equation~\eqref{action_B}, and define the split extension $S\definicio(\Z/p^\infty)^{p-1}\rtimes C_p$. Consider $s$ an element of order $p$ in $S\setminus T$, $\zeta$ a generator of the center of $S$ and define $V\definicio\langle s, \zeta\rangle \cong C_p\times C_p$. Then, there exist $p$-local compact groups $(S,\FF,\LL)$ for each prime number $p\geq 3$,  and $(S,\widetilde{\FF},\widetilde{\LL})$ for $p\geq 5$ fulfilling the following table:
	{\renewcommand{\arraystretch}{2}
		\[
		\begin{array}{|c||c|c|c|c|}
		\hline \FF & \Aut_\FF(S) & \Aut_\FF(T) & \Aut_\FF(V) & \text{prime} \\
		\hline \hline \FF & \langle \phi, \psi, \Inn(S) \rangle  & \GL_2(\F_3) & \GL_2(\F_3) & p=3 \\
		\hline \FF &  \langle \phi^2, \psi\phi^{-1}, \Inn(S) \rangle & A_p \rtimes C_{p-1} & \SL_2(\F_p) \rtimes C_{(p-1)/2} & \multirow{2}{30pt}{$p\geq5$} \\
		\cline{1-4} \widetilde{\FF} &  \langle \phi, \psi, \Inn(S) \rangle & \Sigma_p \times C_{p-1} & \GL_2(\F_p) & \\ \hline
		\end{array}
		\]
	}
	and satisfying:
	\begin{enumerate}
		\item Neither $(S,\FF,\LL)$, nor $(S,\widetilde{\FF},\widetilde{\LL})$ can be realized by a compact Lie group, or by a $p$-compact group.
		\item For $p\geq 3$, the $p$-local compact groups $(S,\FF,\LL)$ are simple, and for $p\geq5$, $(S,\FF,\LL)$ is the only proper normal 	subsystem of $(S,\widetilde{\FF},\widetilde{\LL})$.
		\item The $p$-completed nerves of $\LL$ (for $p \geq 3$) and $\widetilde{\LL}$ (for $p \geq 5$) are simply connected. 
	\end{enumerate}
\end{theorem}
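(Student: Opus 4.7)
I would organize the argument in four stages that mirror the four claims of the statement: (i) construct $\FF$, and for $p\geq 5$ also $\widetilde{\FF}$, and prove saturation; (ii) produce the associated centric linking systems $\LL$ and $\widetilde{\LL}$; (iii) verify non-realizability by compact Lie groups and $p$-compact groups; (iv) determine all normal subsystems and compute $\pi_1$ of the $p$-completed nerves.

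\textbf{Construction and saturation.} For (i), I would prescribe the automorphism groups as displayed in the table on $S$, $T$ and $V$, and define $\FF$ (resp.\ $\widetilde{\FF}$) as the fusion system on $S$ generated by these groups together with $\Inn(S)$ and restrictions. Using the structure of the $C_p$-action through the matrix $B$, one first checks that $T$ represents the only $\FF$-conjugacy class of infinite $\FF$-centric radical subgroups, and that $V$ represents the only conjugacy class of finite essential subgroups properly contained in $S$. I would then appeal to the saturation criterion for $p$-local compact groups developed in the toolkit sections of the paper, along the lines of the Broto--Castellana--Grodal--Levi--Oliver criterion applied at each discrete level $S_n=(\Z/p^n)^{p-1}\rtimes C_p$, and pass to the discrete $p$-toral limit. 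The $p=3$ case is calibrated to recover $\FF(3^{2k+1},3)$ from \cite[Table~6]{DRV}, and the $p\geq 5$ analogues follow the same combinatorial pattern with the prescribed $\SL_2(\F_p)$ and $\GL_2(\F_p)$ data on $V$.

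\textbf{Linking systems, non-realization, normal subsystems.} For (ii), once saturation is established the existence and uniqueness of $\LL$ and $\widetilde{\LL}$ follow from the Broto--Levi--Oliver obstruction theory, reducing to the vanishing of $\lim^{i}\Z\FF$ for $i=2,3$ over the appropriate orbit category; I expect this to reduce to a level-wise computation on the finite subsystems and a transfer to the colimit. For (iii), the key point is that the Weyl group $\Aut_\FF(T)/\Aut_T(T)$ contains $A_p$ (or $\Sigma_p$, or the order-$3$ element of $\GL_2(\F_3)$), which does not act on $T\otimes_{\Z_p}\mathbb{Q}_p\cong \mathbb{Q}_p^{p-1}$ as a pseudo-reflection group; comparing with the Clark--Ewing list of irreducible $p$-adic reflection groups of rank $p-1$ rules out realization by a connected $p$-compact group, and the real analogue of the same argument, coupled with reduction to the connected component, rules out compact Lie realizations. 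For (iv), I would classify the strongly $\FF$- and $\widetilde{\FF}$-closed subgroups of $S$: transitivity of $\Aut_\FF(V)$ on $V\setminus\{1\}$ together with the $\Aut_\FF(T)$-action on $T$ eliminates all nontrivial proper strongly closed candidates, so any normal subsystem is supported on all of $S$. Parametrizing such subsystems by (compatible) normal subgroups of the outer automorphism groups prescribed in the table then yields simplicity of $(S,\FF,\LL)$ for every $p\geq 3$ and identifies it as the unique proper normal subsystem of $(S,\widetilde{\FF},\widetilde{\LL})$ for $p\geq 5$, reflecting the index-$2$ inclusion of the two rows.

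\textbf{Fundamental group and main obstacle.} Simple connectivity of $|\LL|^\wedge_p$ and $|\widetilde{\LL}|^\wedge_p$ will follow from the hyperfocal subgroup theorem for $p$-local compact groups, since already the commutators produced by $\Aut_\FF(S)$ and $\Aut_\FF(V)$ on $S$ span the whole of $S$, so $\foc(\FF)=S$ and $\pi_1(|\LL|^\wedge_p)=S/\foc(\FF)$ vanishes. The main obstacle, in my view, is step (i): producing a clean proof of saturation that handles the discrete $p$-toral group $S$ uniformly in $p$, including the exhaustive identification of the essential subgroups, the compatibility of the prescribed automorphism data on their overlaps, and the control of $\FF$-conjugacy in the tower $(S_n)$. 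Once the fusion data are firmly in place the remaining steps are applications of the standard obstruction-theoretic and normal-subsystem machinery adapted to the compact setting.
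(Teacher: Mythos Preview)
Your overall architecture matches the paper's: construct the fusion systems, prove saturation, deduce linking systems, then handle exoticness, normal subsystems, and $\pi_1$. The normal-subsystem step and the $\pi_1$ computation are essentially what the paper does (no proper strongly closed subgroups, hence any normal subsystem lives over $S$; then $O^p_{\FF}(S)=S$ via the Hyperfocal Subgroup Theorem). Two of your steps, however, take routes that differ substantively from the paper and deserve comment.

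\textbf{Saturation and linking systems.} The paper does not argue level-by-level on the tower $(S_n)$; it proves saturation directly on $S$ via a centralizer criterion (its Proposition~\ref{1.1LO}, generalizing \cite[Proposition~1.1]{LO}): one fixes the set $\mathfrak{X}$ of order-$p$ elements in $T$, shows every order-$p$ element is $\FF$-conjugate into $\mathfrak{X}$, and checks that each $C_{\FF}(v)$ is the fusion system of the compact Lie group $T\rtimes A_p$ (or $T\rtimes\Sigma_p$), hence saturated. This avoids the bookkeeping of essential subgroups at each finite stage and the passage to a colimit. For the linking system you do not need any $\varprojlim{}^i$ computation: the paper simply invokes Levi--Libman's existence/uniqueness theorem for centric linking systems over discrete $p$-toral groups, which settles (ii) in one line.

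\textbf{Non-realizability.} Here your strategy is genuinely different. You propose to rule out $p$-compact groups by arguing that $\Aut_{\FF}(T)$ is not a $p$-adic pseudo-reflection group on $T\otimes_{\Z_p}\mathbb{Q}_p$, and then to compare with the Clark--Ewing list. This can be made to work, but it requires (a) justifying that any realizing $p$-compact group would be connected (true here because every element of $S$ is $\FF$-conjugate into $T$, but you should say so), and (b) a case-by-case check that $\GL_2(\F_3)$, $A_p\rtimes C_{p-1}$, and $\Sigma_p\times C_{p-1}$ are not generated by pseudo-reflections in the relevant representation; your parenthetical ``contains \ldots the order-$3$ element of $\GL_2(\F_3)$'' is not the right kind of statement, since containing a non-reflection does not preclude being a reflection group. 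The paper's argument bypasses all of this: it shows that the centralizer $C_{\FF}(Z)$ is the fusion of $T\rtimes A_p$ or $T\rtimes\Sigma_p$, and then invokes Ishiguro's result that $B(T\rtimes H)^\wedge_p$ is a $p$-compact group only if $H$ is $p$-nilpotent, which $A_p$ and $\Sigma_p$ are not. This is shorter, uniform in $p$, and uses no classification. For compact Lie groups both approaches reduce to the connected component via the induced normal subsystem, and then feed back into the $p$-compact case; your sketch here is fine.
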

\begin{proof}
The fusion systems $(S,\FF)$ (for $p \geq 3$) and $(S,\widetilde{\FF})$ (for $p \geq 5$) are defined in Section~\ref{sec:plcg}. The saturation of $(S,\FF)$ and $(S,\widetilde{\FF})$ is proved in Theorem~\ref{FFpissaturated} and the exoticness results are proved in Theorems \ref{exoticaspcompact} and \ref{exoticasLiegroup}. The simplicity and normality conditions, together with the property of the fundamental groups, are proved in Proposition~\ref{prop_FFp_simple}. 
\end{proof}

Proving this result requires some new techniques related to $p$-local compact groups. Mainly, we develop the following tools.
\begin{itemize}
\item A saturation criterion for $p$-local compact groups (Proposition~\ref{1.1LO}) which generalizes \cite[Proposition~1.1]{LO} to the infinite case.
\item A classification of saturated fusion subsystems of index prime to $p$ in a given $p$-local compact group: Appendix~\ref{appendixA} generalizes the corresponding classification in the case of $p$-local finite groups (\cite{BCGLO2}) to the case of $p$-local compact groups.
\item An analysis of the homotopy type and connectedness of classifying spaces of certain \emph{centralizer $p$-local compact groups} associated to $p$-compact groups (Proposition~\ref{prop_p_compact}).
\end{itemize}

\begin{remark}\label{generated} 
	Let $S$ be a finite $p$-group or a discrete $p$-toral group. We say that $\FF$ is the fusion system over $S$ generated by $\Aut_\FF(Q)$ for $Q$ in a list of subgroups of $S$ if any other morphism in $\FF$ is the composition of restrictions of the given automorphism groups.
\end{remark}

\noindent\emph{Acknowledgements:} The authors thank the interest of C.~Broto, J.~M\o{}ller, B.~Oliver and A.~Viruel and the conversations with all of them while working in the results of this paper. Each author thanks respectively the Universitat Aut\`{o}noma de Barcelona, the Centre for Symmetry and Deformation in Copenhagen and the Kyoto University for their hospitality during the corresponding stays.


\section{On \texorpdfstring{$p$}{p}-local compact groups}\label{review_pcompact}

In this section we recall and generalize some concepts about $p$-local compact groups that we will use throughout this paper. For a more exhaustive treatment of this topic, the reader is referred to \cite{BLO2}, \cite{BLO3} and \cite{AKO}. Let $p$ be a prime to remain fixed for the rest of this section.

\subsection{Normal fusion subsystems}
We start by reviewing the concept of a normal subsystem of a fusion system. Let $S$ be a discrete $p$-toral group, and let $\FF$ be a saturated fusion system over $S$. 
Let also $P$ be a subgroup of $S$.
Recall that: 
\begin{itemize}
\item $P$ is \emph{strongly closed in $\FF$} if we have $\varphi(g)\in P$ for all $g \in P$ and $\varphi\in\Hom_\FF(\gen{g},S)$.
\item $P$ is \emph{normal in $\FF$} if $P$ is normal in $S$ and each morphism $\varphi \in \Hom_\FF(Q,R)$ in $\FF$ extends to a morphism $\overline{\varphi}\in\Hom_\FF(PQ,PR)$ such that $\overline{\varphi}(P)=P$. The maximal normal $p$-subgroup of $\FF$ is denoted by $O_p(\FF)$.
\end{itemize}

The following definition was introduced by M. Aschbacher \cite[Section 6]{Aschbacher} for finite fusion systems, although it applies to fusion systems over discrete $p$-toral groups without modification.

\begin{definition}\label{definormal}
Let $S$ be a discrete $p$-toral group, let $\FF$ be a saturated fusion system over $S$, and let $(S', \FF') \subseteq (S, \FF)$ be a subsystem. Then, $\FF'$ \emph{is normal in} $\FF$ if the following conditions are satisfied.
\begin{enumerate}[label=\textup{(N\arabic*)}]
\item $S'$ is strongly closed in $\FF$.
\item For each $P \leq Q \leq S'$ and each $\gamma \in \Hom_{\FF}(Q, S)$, the map that sends each $f \in \Hom_{\FF'}(P,Q)$ to $\gamma \circ f \circ 
\gamma^{-1}$ defines a bijection between the sets $\Hom_{\FF'}(P,Q)$ and $\Hom_{\FF'}(\gamma(P), \gamma(Q))$.
\item $\FF'$ is a saturated fusion system over $S'$.
\item Each $f \in \Aut_{\FF'}(S')$ extends to some $\widetilde{f} \in \Aut_{\FF}(S' C_S(S'))$ such that
\[
[\widetilde{f}, C_S(S')] = \{\widetilde{f}(g)\cdot g^{-1} \, | \, g \in C_S(S')\} \leq Z(S').
\]
\end{enumerate}
\end{definition}

\begin{lemma}\label{normal_subgroup_normal_subsystem}
Let $G$ be a compact Lie group and $S \in \Syl_p(G)$ be a Sylow $p$-subgroup. Let $H \trianglelefteq G$ be a normal closed subgroup and write $R = S \cap H$. Then $R \in \Syl_p(H)$ and the saturated fusion system $\FF_R(H)$ is normal in $\FF_S(G)$.
\end{lemma}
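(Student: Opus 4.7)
The plan is to establish $R \in \Syl_p(H)$ first, then verify the conditions (N1)--(N3) of Definition~\ref{definormal}, which follow routinely from the normality of $H$ in $G$, and concentrate the real work on condition (N4). For the Sylow claim, I would pick any $R_0 \in \Syl_p(H)$ and use Sylow's theorem in $G$ to find $g \in G$ with $gR_0g^{-1} \leq S$; normality of $H$ then forces $gR_0g^{-1} \leq S \cap H = R$, and since conjugation by $g$ is an automorphism of $H$ and hence preserves Sylows, maximality of $gR_0g^{-1}$ inside $R$ yields $R = gR_0g^{-1} \in \Syl_p(H)$. Condition (N1) is immediate because any $\FF_S(G)$-morphism is induced by conjugation by some $g\in G$, which preserves $H$; (N2) follows from $\gamma\circ c_h\circ\gamma^{-1} = c_{\gamma h\gamma^{-1}}$ with $\gamma h\gamma^{-1}\in H$; and (N3) is the saturation of the fusion system of the compact Lie group $H$ at its Sylow $R$, given by \cite{BLO3}.

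For (N4), given $f = c_h|_R$ with $h \in N_H(R)$, the strategy is to find $a \in C_H(R)$ such that $\widetilde h := a^{-1} h \in H$ also normalizes $C_S(R)$; this automatically forces $[\widetilde h, C_S(R)] \leq C_S(R) \cap H = Z(R)$. Strong closure of $R$ makes it fully $\FF$-centralized, so $C_S(R) \in \Syl_p(C_G(R))$ and, by the Sylow-in-normal-subgroup principle applied to $C_H(R) = C_G(R)\cap H \trianglelefteq C_G(R)$, the intersection $Z(R) = C_S(R)\cap H$ is a Sylow of $C_H(R)$. I would then introduce the closed subgroup $L := \overline{C_S(R)}\cdot C_H(R) \leq C_G(R)$, observing that the quotient $L/C_H(R) \cong \overline{C_S(R)}/(\overline{C_S(R)}\cap C_H(R))$ is itself $p$-toral. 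Since conjugation by $h \in H$ acts trivially on $C_G(R)/C_H(R)$, both $\overline{C_S(R)}$ and $h\overline{C_S(R)}h^{-1}$ lie in $L$ and are maximal $p$-toral there. A Frattini argument --- $L = C_H(R)\cdot N_L(\overline{C_S(R)})$, because $N_L(\overline{C_S(R)})$ surjects onto the $p$-toral quotient $L/C_H(R)$ --- combined with Sylow in $L$ produces $a \in C_H(R)$ with $a\overline{C_S(R)}a^{-1} = h\overline{C_S(R)}h^{-1}$; passing to the canonical discrete Sylow gives $aC_S(R)a^{-1} = hC_S(R)h^{-1}$.

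With $\widetilde h := a^{-1} h \in H$, one verifies that $\widetilde h$ normalizes both $R$ and $C_S(R)$ and restricts to $f$ on $R$, so $\widetilde f := c_{\widetilde h} \in \Aut_{\FF_S(G)}(R C_S(R))$ extends $f$. For $x \in C_S(R)$, the commutator $[\widetilde h, x]$ lies in $C_S(R)$ (because $\widetilde h$ normalizes $C_S(R)$) and in $H$ (because $\widetilde h \in H$ and $H \trianglelefteq G$), so it lies in $S \cap H = R$; as it also centralizes $R$ (since $c_x|_R = \Id$ commutes with $c_{\widetilde h}|_R$), it lies in $R \cap C_G(R) = Z(R)$. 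The main obstacle is the Sylow/Frattini step inside $L$: one must ensure that the usual finite Frattini argument carries over when the quotient is $p$-toral rather than a finite $p$-group, and that the decomposition of maximal $p$-toral subgroups descends to their canonical discrete Sylows. Both points go through once $L/C_H(R)$ is recognised as $p$-toral, but this is the unique place in the argument requiring care with the compact Lie rather than finite setting.
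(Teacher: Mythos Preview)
Your outline follows essentially the same route as the paper: the Sylow claim and conditions (N1)--(N3) are disposed of exactly as the paper does, and for (N4) both you and the paper aim to modify $h\in N_H(R)$ by an element of $C_H(R)$ so that the resulting element normalizes $C_S(R)$, via a Frattini/conjugacy argument inside a group built from $C_H(R)$ and the ($p$-toral closure of the) $S$-centralizer of $R$. The only structural difference is the ambient group: you work in $L=\overline{C_S(R)}\cdot C_H(R)$ and use conjugacy of maximal $p$-toral subgroups there, while the paper works in $N_H(R)\,C_{\overline S}(R)$ with normal subgroup $C_H(R)\,C_{\overline S}(R)$ and applies Frattini with the \emph{discrete} Sylow $C_S(R)$ of that normal subgroup.

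There is, however, a genuine gap in your passage ``passing to the canonical discrete Sylow gives $aC_S(R)a^{-1}=hC_S(R)h^{-1}$''. A $p$-toral group need not have a unique (let alone characteristic) maximal discrete $p$-toral subgroup: for example $O(2)$ has one discrete $2$-Sylow $\Z/2^{\infty}\rtimes\langle r\rangle$ for every choice of reflection $r$, and these are permuted nontrivially by $SO(2)$. Thus knowing that $a^{-1}h$ normalizes $\overline{C_S(R)}$ does \emph{not} let you conclude that it normalizes $C_S(R)$; the adjustment needed to fix this (conjugating by an element of the torus of $\overline{C_S(R)}$) would in general take you outside $H$, destroying the commutator estimate $[\widetilde h,C_S(R)]\le H$. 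The paper's version avoids this particular pitfall by never passing to $\overline{C_S(R)}$ in the Sylow step: it observes $C_S(R)\in\Syl_p\big(C_H(R)\,C_{\overline S}(R)\big)$ and applies Frattini there, so the element $y$ it produces already normalizes $C_S(R)$ itself. You can repair your argument in the same spirit: note that $[h,C_S(R)]\le C_H(R)$ forces $hC_S(R)h^{-1}\le C_H(R)\cdot C_S(R)\le L$, so $hC_S(R)h^{-1}$ and $C_S(R)$ are both \emph{discrete} $p$-Sylows of the closed group $L$; running conjugacy of Sylows in $L$ and then peeling off the $C_H(R)$-factor of the conjugating element (using $C_H(R)\trianglelefteq L$) gives the desired $a\in C_H(R)$ with $a^{-1}h$ normalizing $C_S(R)$ directly.
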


\begin{proof}
We have to show that $R \in \Syl_p(H)$ and that $(R, \FF_R(H)) \subseteq (S, \FF_S(G))$ satisfies the properties of Definition~\ref{definormal}.

To prove that $R \in \Syl_p(H)$, choose $P \in \Syl_p(H)$ such that $R \leq P$, which can be done by \cite[Proposition 9.3(b)]{BLO3}. By the same result from \cite{BLO3}, $P \leq S^g$ for some $g \in G$. Since $H \trianglelefteq G$ we get $P = P \cap H \leq S^g \cap H = (S \cap H)^g = R^g$. Therefore, $R \leq P \leq R^g$. By the discussion after Definition 1.1 in \cite{BLO3} we get $|R| \leq |P| \leq |R^g| = |R|$, so $R = P$.

In conclusion, we have that $R \in \Syl_p(H)$. It remains to prove that the fusion system $(R, \FF_R(H)) \subseteq (S, \FF_S(G))$ satisfies the properties of
Definition~\ref{definormal}.

\textbf{Condition (N1):} For every morphism $\varphi \in \FF_S(G)$ we have $\varphi = c_g$ for some $g \in G$. Since $H$ is normal in $S$ we obtain $\varphi(a) = c_g(a) \in R$ for all $a \in R$, so $R$ is strongly closed in $\FF_S(G)$.

\textbf{Condition (N2):} Fix $\gamma \in \Hom_{\FF_S(G)}(Q, S)$, again we have $\gamma = c_g$ for some $g \in G$. It is easy to see that we can define the map
\[\begin{array}{cccc}
\gamma^*\colon & \Hom_{\FF_R(H)}(P,Q) & \longrightarrow & \Hom_{\FF_R(H)}(\gamma(P), \gamma(Q))\\
 & f & \mapsto & \gamma \circ f \circ \gamma^{-1}
\end{array}\]
And, by considering $\beta = c_{g^{-1}}$, the map $\beta^*$ is an inverse of $\gamma^*$, so $\gamma^*$ is bijective.


\textbf{Condition (N3):} Since $H$ is a closed subgroup of $G$, it is itself a compact Lie group. Moreover, $R \in \Syl_p(H)$, so $\FF_R(H)$ is a saturated fusion system over $R$ by \cite[Lemma 9.5]{BLO3}.

\textbf{Condition (N4):} Consider the $p$-toral group $\overline{S}$, which is the topological closure of $S$. An elementary computation shows that $[N_H(R), C_{\overline{S}}(R)] \leq C_H(R)$. Moreover, 
we can see that $C_H(R)C_{\overline{S}}(R)$ is a normal subgroup of $N_H(R)C_{\overline{S}}(R)$. Then, since $R$ is strongly closed in $\FF_S(G)$, it is also fully centralized in $\FF_S(G)$, 
so we have $C_S(R) \in \Syl_p(C_G(R))$ by \cite[Lemma 9.5]{BLO3}. Now we have $C_S(R) \in \Syl_p(C_H(R)C_{\overline{S}}(R))$ and we can apply the Frattini argument to obtain
\[
N_H(R)C_{\overline{S}}(R) = C_H(R)C_{\overline{S}}(R)N_{N_H(R)C_{\overline{S}}(R)}(C_S(R)).
\]
Finally, let $f \in \Aut_{\FF_R(H)}(R)$, then $f = c_g$ with $g \in N_H(R)$. By the previous decomposition, we can write $g = xy$ for some $x \in 
C_H(R)C_{\overline{S}}(R)$ and $y \in N_{N_H(R)C_{\overline{S}}(R)}(C_S(R))$. So $y=hz$, with $h\in N_H(R)$ and $z\in C_{\overline{S}}(R)$. Under this situation, $h\in N_G(C_S(R))$: if $s\in C_S(R)$ and $r\in R$, we have to check that $(hsh^{-1})r(hsh^{-1})^{-1}=r$, and it follows because $hs(h^{-1}rh)s^{-1}h^{-1}=h(h^{-1}rh)h^{-1}$ (as $h^{-1}rh\in R$ and $s\in C_S(R)$). Then, we can consider $\widetilde{f} = c_h$, which defines an element $\widetilde{f} \in 
\Aut_{\FF_S(G)}(R C_S(R))$ and it is an extension of $f$. Moreover, if $g \in C_S(R)$, $\widetilde{f}(g) \cdot g^{-1} \in H \cap C_S(R) = C_R(R) = Z(R)$.
\end{proof}

Next we recall the definition of simplicity for saturated fusion systems and $p$-local compact groups \cite[Definition~3.1 and Remark~2.3]{Gonza2}.

\begin{definition}\label{defisimple}
Let $\FF$ be a saturated fusion system over a discrete $p$-toral group $S$. Then, $\FF$ is \emph{simple} if it satisfies one of the following conditions.
\begin{enumerate}[label=\textup{(\roman*)}]
\item $\rk(\FF) = 0$ and $\FF$ has no proper normal subsystems.

\item $\rk(\FF) \geq 1$ and every proper normal subsystem of $\FF$ is finite.

\end{enumerate}
Similarly, a $p$-local compact group $\g = (S, \FF, \LL)$ is \emph{simple} if $\FF$ is simple.

\end{definition}

Note that if $\g = (S, \FF, \LL)$ is a simple $p$-local compact group of positive rank $r \geq 1$, then the fusion system $\FF$ may still contain finite normal subsystems. The reader may compare this situation with the definitions of finite simple group and simple compact Lie group, where a similar phenomenon occurs.

For a (possibly infinite) group $G$, let $O^p(G)$ be the intersection of all the normal subgroups of $G$ that have finite $p$-power index. Similarly, let $O^{p'}(G)$ be the intersection of all the normal subgroups of $G$ that have finite index prime to $p$. Following the usual notation for groups, let also $O_p(G)$ be the maximal normal $p$-subgroup of $G$.

\begin{definition}\label{defifoca}
Let $S$ be a discrete $p$-toral group, and let $\FF$ be a saturated fusion system over $S$. The \emph{hyperfocal subgroup of $\FF$} is the subgroup
\[
O_{\FF}^p(S) = \gen{T, \{g \cdot \varphi(g)^{-1} \, |\, g \in Q \leq S, \, \varphi \in O^p(\Aut_{\FF}(Q))\}} \leq S.
\]
Given a saturated subsystem $(S', \FF') \subseteq (S, \FF)$, we say that:
\begin{itemize}
\item $\FF'$ has \emph{$p$-power index in $\FF$} if $S' \geq O_{\FF}^p(S)$, and $\Aut_{\FF'}(P) \geq O^p(\Aut_{\FF}(P))$ for all $P \leq S'$.

\item $\FF'$ has \emph{index prime to $p$} if $S' = S$, and $\Aut_{\FF'}(P) \geq O^{p'}(\Aut_{\FF}(P))$ for all $P \leq S'$.

\end{itemize}

\end{definition}

In some parts of this paper we deal with subsystems of $\FF$ of \emph{$p$-power index} and of \emph{index prime to $p$}. Such subsystems can be detected via the computation of certain subgroups. The detection techniques in the finite case were developed in \cite{BCGLO2}. In the compact case, the tools for the detection of subsystems of $p$-power index were developed in \cite[Appendix B]{Gonza2}. Regarding the detection of subsystems of index prime to $p$, the necessary techniques are developed in Appendix \ref{appendixA}.

The following result states the classification of subsystems of $p$-power index and of index prime to $p$ for a given infinite fusion system. The first part corresponds to \cite[Theorem B.12]{Gonza2}, together with \cite[Corollary B.13]{Gonza2}, and the second part corresponds to Theorem~\ref{app4}, together with Corollary~\ref{minnor}.

\begin{theorem}
Let $(S, \FF, \LL)$ be a $p$-local compact group, and set
\[
\Gamma_{p'}(\FF) \definicio \pi_1(|\LL|)/O^{p'}(\pi_1(|\LL|))
\]
(see Equation~\eqref{Gamma} in the Appendix~\ref{appendixA} for more details). Then, the following holds.
\begin{enumerate}
\item For each $R \leq S$ such that $O_{\FF}^p(S) \leq R$, there exists a unique subsystem $(R,\FF_R) \subseteq (S,\FF)$ of $p$-power index. In particular, 
$\FF$ contains a minimal subsystem of $p$-power index, denoted by $O^p(\FF)$, which is normal in $\FF$.
\item For each $H \leq \Gamma_{p'}(\FF)$ there exists a unique subsystem $(S, \FF_H) \subseteq (S, \FF)$ of index prime to $p$. In particular, $\FF$ contains a minimal subsystem of index prime to $p$, denoted by $O^{p'}(\FF)$, which is normal in $\FF$.
\end{enumerate}
\end{theorem}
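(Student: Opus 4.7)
My plan is to handle the two parts separately, since part (1) is a pure citation while part (2) requires genuine work carried out in Appendix~\ref{appendixA}.

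For part (1), the existence of a unique subsystem $(R,\FF_R)$ of $p$-power index for every $R$ with $O_\FF^p(S) \leq R \leq S$ is exactly the content of \cite[Theorem~B.12]{Gonza2}, and the fact that the minimal choice $R=O_\FF^p(S)$ yields the distinguished normal subsystem $O^p(\FF)$ is \cite[Corollary~B.13]{Gonza2}. So in this paper I would simply invoke those results and move on, noting that they are stated in full generality for $p$-local compact groups in \cite{Gonza2}.

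For part (2), the strategy follows the template of \cite{BCGLO2} in the $p$-local finite case, rebuilt in Appendix~\ref{appendixA} for the compact setting. The first step is to make precise the meaning of the quotient $\Gamma_{p'}(\FF) = \pi_1(|\LL|)/O^{p'}(\pi_1(|\LL|))$, which is a finite $p'$-group after \cite[Appendix A]{Gonza2}-style arguments (this is the substance of Equation~\eqref{Gamma} in the Appendix). Given a subgroup $H \leq \Gamma_{p'}(\FF)$, I would construct $(S,\FF_H)$ by defining, for each $P \leq S$,
\[
\Aut_{\FF_H}(P) \definicio \bigl\{\varphi \in \Aut_{\FF}(P) : [\varphi]\in H\bigr\},
\]
where $[\varphi]$ denotes the class determined by a lift along the centric linking system, as made rigorous in Theorem~\ref{app4}; then $\FF_H$ is the fusion system over $S$ generated by these automorphism groups in the sense of Remark~\ref{generated}. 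The four things that must be checked are: (a) $\FF_H$ is saturated, (b) $\FF_H$ has index prime to $p$ inside $\FF$, (c) any other saturated subsystem of index prime to $p$ with the same class data coincides with $\FF_H$, (d) the minimal subsystem $O^{p'}(\FF)$ obtained from $H=1$ is normal in $\FF$.

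For (a) I would invoke the saturation criterion, namely Proposition~\ref{1.1LO}, which is precisely the generalization of \cite[Proposition~1.1]{LO} to the infinite case that is advertised in the introduction. For (b) the containment $\Aut_{\FF_H}(P) \geq O^{p'}(\Aut_\FF(P))$ is immediate from the definition, since $O^{p'}$ lands in the kernel of the map to $\Gamma_{p'}(\FF)$. For (c) uniqueness reduces to the statement that two saturated fusion systems over $S$ with the same automorphism groups on a suitable collection of subgroups coincide; this is standard once Alperin's fusion theorem for $p$-local compact groups (\cite{BLO3}) is applied, and it is handled in Corollary~\ref{minnor}. The minimality and normality in (d) is where most of the work resides in Appendix~\ref{appendixA}: normality requires verifying condition~(N4) of Definition~\ref{definormal}, and the expected obstacle is that in the compact case one cannot invoke finite order arguments directly---instead one has to use the fact that $S' = S$ and track that automorphisms of $S$ restrict compatibly to the maximal torus, which is where the generalization of the techniques from \cite{BCGLO2} is delicate.

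The step I expect to be the main obstacle is precisely (a)--(d) for the minimal subsystem $O^{p'}(\FF)$ when the rank is positive: ensuring that extensions of automorphisms through the torus behave correctly under passage to $O^{p'}$. Once the saturation criterion Proposition~\ref{1.1LO} is in place and the Appendix has been set up to compare subgroups of $\Gamma_{p'}(\FF)$ with subsystems cofibrantly, everything else reduces to bookkeeping analogous to the finite case.
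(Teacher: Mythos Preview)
Your handling of part (1) matches the paper exactly: it is a direct citation of \cite[Theorem~B.12 and Corollary~B.13]{Gonza2}.

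For part (2), your broad outline (follow \cite{BCGLO2}, build a map to $\Gamma_{p'}(\FF)$, pull back subgroups) is correct, but the mechanism you propose for saturation is not the one the paper uses, and I do not see how to make yours go through. You propose to prove that $\FF_H$ is saturated by invoking Proposition~\ref{1.1LO}. That criterion requires, for each $x$ in your chosen set $\setp$, that the centralizer $C_{\FF_H}(x)$ be a \emph{saturated} fusion system. But $C_{\FF_H}(x)$ is itself a subsystem of index prime to $p$ in $C_{\FF}(x)$, so you are reduced to the same problem you started with; there is no evident induction to close the loop. In the paper, Proposition~\ref{1.1LO} is used only to prove saturation of the specific examples in Theorem~\ref{FFpissaturated}, where the centralizers are computed explicitly (Lemma~\ref{centralZ}) and seen to come from honest groups.

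The paper instead proves saturation of $\FF_H$ via the \emph{fusion mapping triple} machinery of Appendix~\ref{appendixA}: one constructs a functor $\widehat{\theta}\colon \FF^c \to \bb(\Out_\FF(S)/\Out^0_\FF(S))$ (Proposition~\ref{app2}), packages it as a fusion mapping triple, extends it from $\FF^c$ to $\FF^q$ (Lemma~\ref{app5}, which needs the technical Lemma~\ref{app51} on quasicentric subgroups), and then invokes Proposition~\ref{app3} to conclude that $\FF_H$ is saturated. This is the genuine generalization of \cite{BCGLO2} to the compact case, and the bullet functor $(-)^\bullet$ is what makes the inductive arguments finite. You do not mention fusion mapping triples at all; they are the missing ingredient.

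Two smaller corrections: uniqueness in (c) is established inside Theorem~\ref{app4} (via Alperin's fusion theorem), not in Corollary~\ref{minnor}; and for normality (d), condition (N4) is actually trivial here because $S'=S$ forces $S'C_S(S')=S$, while (N2) is the condition that needs the functoriality of $\widehat\Theta$ --- the opposite of the emphasis you placed.
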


\begin{definition}
The saturated fusion system $(S, \FF)$ is \emph{reduced} if $O_p(\FF)=1$ and $O^p(\FF)=O^{p'}(\FF)=\FF$.
\end{definition}

\begin{lemma}\label{reducedsimple}
Let $(S, \FF)$ be a saturated fusion system, with $S$ a finite $p$-group. If $\FF$ is reduced and $S$ contains no proper strongly $\FF$-closed subgroups, then $\FF$ is simple.
\end{lemma}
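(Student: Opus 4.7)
The plan is to argue by contradiction. Suppose $\FF$ admits a nontrivial normal subsystem $(S', \FF')$ with $\FF' \neq \FF$. By condition (N1) of Definition~\ref{definormal}, the subgroup $S'$ is strongly $\FF$-closed, and nontriviality of $\FF'$ gives $S' \neq 1$. The hypothesis that $S$ contains no proper strongly $\FF$-closed subgroups then forces $S' = S$, so $\FF'$ and $\FF$ are both saturated fusion systems over the same finite $p$-group $S$.

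The key step, and the main obstacle, is to show that $\FF'$ has index prime to $p$ in $\FF$ in the sense of Definition~\ref{defifoca}, i.e.\ $O^{p'}(\Aut_\FF(P)) \subseteq \Aut_{\FF'}(P)$ for every $P \leq S$. Since $O^{p'}$ of a finite group is generated by its $p$-elements, it suffices to show that every $p$-element of $\Aut_\FF(P)$ lies in $\Aut_{\FF'}(P)$. Condition (N2) implies that $\Aut_{\FF'}(P) \trianglelefteq \Aut_\FF(P)$, and since (N2) transports $\Aut_{\FF'}$ along $\FF$-isomorphisms, I may reduce to the case where $P$ is fully automized in $\FF$. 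The saturation axioms for $\FF$ then yield $\Aut_S(P) \in \Syl_p(\Aut_\FF(P))$. For any $p$-element $\alpha \in \Aut_\FF(P)$, Sylow's theorem produces $\beta \in \Aut_\FF(P)$ with $\beta\alpha\beta^{-1} \in \Aut_S(P) = \Aut_{S'}(P) \subseteq \Aut_{\FF'}(P)$, where the last inclusion uses $S' = S$ together with the saturation of $\FF'$. Normality of $\Aut_{\FF'}(P)$ in $\Aut_\FF(P)$ then returns $\alpha$ to $\Aut_{\FF'}(P)$.

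With the key claim in hand, $\FF'$ is a subsystem of index prime to $p$ in $\FF$. By the classification of such subsystems stated just after Definition~\ref{defifoca}, combined with the reducedness hypothesis $O^{p'}(\FF) = \FF$, the only subsystem of $\FF$ of index prime to $p$ is $\FF$ itself. Hence $\FF' \supseteq O^{p'}(\FF) = \FF$, contradicting $\FF' \neq \FF$, and so $\FF$ is simple.
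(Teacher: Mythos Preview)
Your proof is correct and follows the same route as the paper: reduce to $S' = S$ via the strongly-closed hypothesis, show the normal subsystem has index prime to $p$, and then invoke $O^{p'}(\FF) = \FF$. The only difference is that where the paper cites \cite[Lemma~5.72]{Craven} to conclude that $\Aut_{\FF'}(Q)$ has index prime to $p$ in $\Aut_{\FF}(Q)$, you supply the direct Sylow-conjugation argument yourself --- this is essentially what that lemma amounts to in the present finite setting.
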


\begin{proof}
Suppose $\FF$ is not simple, and let $(R, \mathcal{E}) \lhd (S,\FF)$ be a (nontrivial) normal subsystem. Then, $R$ is a strongly $\FF$-closed subgroup of $S$. As $\mathcal{E}$ is nontrivial and $S$ contains no proper strongly $\FF$-closed subgroups, it follows that $R = S$. By \cite[Lemma 5.72]{Craven}, $\Aut_{\mathcal{E}}(Q)$ has index prime to $p$ in $\Aut_{\FF}(Q)$ for all $Q \leq S$. Hence, $\mathcal{E}$ must be of index prime to $p$ in $\FF$, contradicting the assumption that $O^{p'}(\FF) = \FF$.
\end{proof}

\subsection{A saturation criterion}

In this subsection we present a generalization of \cite[Proposition 1.1]{LO} for infinite fusion systems (see \cite[Proposition 4.4]{BM} for a topological analogue of the result in \cite{LO}). Before we prove the main result of this subsection, Proposition~\ref{1.1LO}, we need a technical result.


\begin{lemma}\label{lemma}
	Let $S$ be a discrete $p$-toral group and $P \leq S$ any nontrivial subgroup. Then, there exists an element $x \in Z(P)$ of
order $p$ which is fixed by all $\varphi \in \Aut_S(P)$.
\end{lemma}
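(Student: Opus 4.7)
The plan is to reduce the lemma to the classical fact that a finite $p$-group acting on a nonzero $\F_p$-vector space has a nontrivial fixed vector. The natural candidate is $\Omega \definicio \{x \in Z(P) : x^p = 1\}$, the $p$-torsion of the center. Conjugation makes $\Omega$ into a module over $N_S(P)$, and the action factors through $\Aut_S(P) = N_S(P)/C_S(P)$, so producing a nontrivial element fixed by the whole $\Aut_S(P)$-action is exactly the required conclusion.

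The first step is to check that $\Omega \neq 1$, which amounts to showing that the nontrivial discrete $p$-toral group $P$ has nontrivial center. I would split into cases according to whether the maximal divisible subgroup $T \cong (\Z/p^\infty)^r$ of $P$ is trivial. If $T = 1$, then $P$ is a nontrivial finite $p$-group and the class equation gives $Z(P) \neq 1$. If $T \neq 1$, then $T[p]$ is a nonzero finite $\F_p$-vector space on which the finite $p$-group $P/T$ acts, so it has nontrivial fixed points, and any such element lies in $Z(P)$. Because every element of a discrete $p$-toral group has $p$-power order, $Z(P)$ then contains an element of exact order $p$, and $\Omega \neq 1$.

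The second step is to verify that the image $H$ of $N_S(P)$ in $\Aut(\Omega)$ is a finite $p$-group. Since $\Omega$ is finite, $\Aut(\Omega)$ is finite, hence $H$ is finite; and since every element of $S$, and therefore of $N_S(P)$ and of $H$, has $p$-power order, Cauchy's theorem forces $|H|$ to be a power of $p$. Applying the fixed-point theorem to the action of $H$ on the nonzero $\F_p$-vector space $\Omega$ then produces a nontrivial element $x \in \Omega$ fixed by $H$, which gives the required element of order $p$ in $Z(P)$. The only step that really uses the discrete $p$-toral hypothesis (rather than a finite $p$-group one) is this reduction to a finite $p$-group action, via the $p$-power order of every element of $S$; everything else is formal.
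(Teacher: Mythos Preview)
Your proof is correct and follows essentially the same idea as the paper's: reduce to a finite $p$-group acting on a nonzero finite $\F_p$-vector space inside $Z(P)$ and invoke the fixed-point theorem. The paper splits into the cases $P$ finite (acting on $P$ via $S/C_S(P)$) and $P$ infinite (acting on $\Omega_1(T')$ for $T'$ the maximal torus of $P$, via the finite $p$-group $S/C_S(P)T$), whereas your use of Cauchy's theorem to conclude that the image of $N_S(P)$ in $\Aut(\Omega_1(Z(P)))$ is a finite $p$-group gives a slightly more uniform packaging of the same argument.
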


\begin{proof}
As $\Aut_S(P) = \Aut_{N_S(P)}(P)$, we may assume that $P \lhd S$. Suppose first that $P$ is finite. In this case, $\Aut_S(P) = S/C_S(P)$ is a finite $p$-group, and it is a well-known fact that $1 \neq P^{S/C_S(P)} \leq Z(P)$. Suppose now that $P$ is not finite. In this case, consider the quotient $S/C_S(P)T$, where $T \leq S$ is the maximal torus. Let also $T'$ be the maximal torus of $P$, and let $\Omega_1(T') \leq T'$ be the subgroup generated by all elements of order $p$. Since $\Omega_1(T')$ is a characteristic subgroup of $P$ and since $T' \leq T$ is abelian, it follows that $S/C_S(P)T$ is a finite $p$-group which acts on $\Omega_1(T')$. By the finite case studied above, it follows that $1 \neq \Omega_1(T')^{S/C_S(P)T} \leq Z(P)$. This finished the proof.
\end{proof}

\begin{proposition}\label{1.1LO}
	Let $(S, \FF)$ be a fusion system over a discrete $p$-toral group. Then, $\FF$ is saturated if and only if the following holds.
	\begin{enumerate}
	\item $(S, \FF)$ satisfies axiom \textup{(III)} in \cite[Definition 2.2]{BLO3}: let $P_1\leq P_2\leq P_3\leq \cdots $ be an increasing sequence of subgroups of $S$, with $P_\infty=\bigcup_{n=1}^\infty P_n$, and if $\varphi\in\Hom(P_\infty,S)$ is any homomorphims such that $\varphi|_{P_n}\in\Hom_{\FF}(P_n,S)$ for all $n$, then $\varphi\in\Hom_{\FF}(P_\infty,S)$.
	\item There exists a set $\setp$ of elements of order $p$ in $S$ such that the following conditions are satisfied:
	\begin{enumerate}[label=\textup{(\roman*)}]
		\item each $x \in S$ of order $p$ is $\FF$-conjugate to some $y \in \setp$;
		\item if $x,y$ are $\FF$-conjugate and $y \in \setp$, then there is some morphism \[\rho \in \Hom_{\FF}(C_S(x), C_S(y))\] such that 
$\rho(x) = y$ and
		\item for each $x \in \setp$, $C_{\FF}(x)$ is a saturated fusion system over $C_S(x)$.
	\end{enumerate}
	\end{enumerate}
\end{proposition}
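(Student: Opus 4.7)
The direction ``$\FF$ saturated $\Rightarrow$ (1), (2)'' is routine: axiom~(III) is built into the definition of a saturated fusion system, and one may take $\setp$ to be a complete set of $\FF$-conjugacy class representatives of elements of order $p$, chosen so each representative is fully $\FF$-centralized. Then (i) is obvious, (ii) follows by applying the extension axiom~(II) of \cite{BLO3} to the cyclic subgroups $\gen{x}$ and $\gen{y}$ (and then passing to the centralizer side via axiom~(II) again), and (iii) is the standard fact that $C_\FF(y)$ is saturated whenever $y$ is fully $\FF$-centralized.

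\textbf{Sufficiency — overall strategy.} The substance is the converse, and the plan is a reduction to the saturated centralizer systems $C_\FF(y)$ for $y \in \setp$, in the spirit of \cite[Proposition~1.1]{LO}. Given any nontrivial subgroup $P \leq S$, Lemma~\ref{lemma} produces an element $x \in Z(P)$ of order $p$ which is fixed by every $\varphi \in \Aut_S(P)$. By hypothesis (2)(i)--(ii), $x$ is $\FF$-conjugate to some $y \in \setp$ via a morphism $\rho \in \Hom_\FF(C_S(x), C_S(y))$, and since $x \in Z(P)$ we have $P \leq C_S(x)$, so $\rho$ carries $P$ isomorphically onto $\rho(P) \leq C_S(y)$, which sits inside the saturated system $C_\FF(y)$ by (2)(iii). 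For axiom~(I), given $P$ fully $\FF$-normalized one shows that $\rho(P)$ is fully $C_\FF(y)$-normalized, concludes by saturation of $C_\FF(y)$ that it is fully $C_\FF(y)$-centralized and fully $C_\FF(y)$-automized, and then translates these properties back to $P$ in $\FF$ through $\rho$ and through the fact that $x$ is $\Aut_S(P)$-invariant (which ensures that $\Aut_S(P)$ maps cleanly into $\Aut_{C_S(y)}(\rho(P))$). For axiom~(II), a morphism $\varphi \colon P \to P'$ with $P'$ fully $\FF$-centralized is transported via appropriately chosen $\rho$, $\rho'$ into a morphism between subgroups of $C_S(y)$, extended by saturation of $C_\FF(y)$, and pulled back.

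\textbf{Main obstacle.} The central difficulty, absent from the finite argument of \cite[Proposition~1.1]{LO}, is the treatment of subgroups $P \leq S$ that are genuinely infinite, i.e.\ discrete $p$-toral with nontrivial torus part: for such $P$, the group $\Aut_S(P)$ need not be finite, and ``fully $\FF$-normalized'' involves comparisons with the whole (possibly uncountable family of) $\FF$-conjugates of $P$. The plan here is to first carry out the centralizer reduction for \emph{finite} subgroups of $S$, where saturation of $C_\FF(y)$ applies directly, and then to extend to arbitrary $P \leq S$ by exhausting $P = \bigcup_{n} P_n$ with an increasing sequence of finite subgroups $P_n$ and invoking axiom~(III) to glue the approximating morphisms and automorphisms obtained at each finite level into morphisms defined on $P$. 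The compatibility between the choice of the central $x \in Z(P)$ of order $p$ and the finite truncations $P_n$ (for $n$ large enough, $x \in Z(P_n)$ and is $\Aut_S(P_n)$-fixed) is precisely what allows axiom~(III) to do the limit step formally, once the finite stratum is under control.
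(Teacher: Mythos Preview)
Your overall strategy---reduce to the saturated centralizer $C_\FF(y)$ via the element $x\in Z(P)$ of order $p$ fixed by $\Aut_S(P)$---matches the paper's. But your treatment of the infinite case and the order of the argument diverge from the paper in ways that matter.

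First, the paper does \emph{not} handle infinite $P$ by exhausting $P=\bigcup_n P_n$ and gluing via axiom~(III). For axiom~(II), no finite/infinite dichotomy is needed at all: once you pick $x'\in Z(P')$ of order $p$ fixed by $\Aut_S(P')$ (Lemma~\ref{lemma} works for infinite $P'$), the transport to $C_\FF(y)$ and the extension argument go through uniformly. For axiom~(I), the paper reduces to finite $P$ by citing \cite{BLO3} for the fact that axiom~(I) for finite fully normalized subgroups implies axiom~(I) in general (this uses the $(-)^\bullet$ machinery, not axiom~(III)). Your proposed gluing is problematic: if $P=\bigcup_n P_n$, the $P_n$ need not be fully $\FF$-normalized, and your claim that $x$ is $\Aut_S(P_n)$-fixed for large $n$ is not justified---$N_S(P_n)$ can be strictly larger than $N_S(P)$, so $\Aut_S(P_n)$ need not fix $x$.

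Second, you omit a preliminary step the paper carries out before either axiom: showing that $\Aut_\FF(P)$ has Sylow $p$-subgroups (equivalently, $\Out_\FF(P)$ is finite) for every $P\leq S$. This is genuinely new relative to the finite case in \cite{LO}, since a priori $\Aut_\FF(P)$ could be infinite; the paper uses $\Omega_1(Z(P))$ and condition~(iii) to find a normal discrete $p$-torus of finite index in $\Aut_\FF(P)$. Without this, the Sylow condition in axiom~(I) has no meaning.

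Third, the paper proves axiom~(II) \emph{before} axiom~(I), and then invokes \cite[Lemma~2.2]{BLO6} (which needs~(II)) inside the proof of~(I) to pass from the transported $P'$ back to the original fully normalized $P$. Your sketch reverses this and does not indicate how you recover the conclusions for $P$ itself from those for $\rho(P)$.
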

\begin{proof}
Suppose first that $(S,\FF)$ is saturated. Then, condition (a) above is obviously satisfied. To check condition (b), let $\setp$ be the set of all elements $x \in S$ of order $p$ such that $\gen{x}$ is fully centralized in $\FF$. Then, conditions (i) and (ii) follow immediately from the saturation axioms on $\FF$ and condition (iii) is \cite[Theorem 2.3]{BLO6}.

Suppose now that $(S,\FF)$ satisfies conditions (a) and (b) in the statement, for a certain set $\setp$. We have to show that $(S, \FF)$ satisfies axioms (I) and (II) in \cite[Definition 2.2]{BLO3}. For the reader's convenience, we recall the statement of axiom before proving it. Before checking that axioms (I) and (II) are satisfied, we show that $\Aut_{\FF}(P)$ has Sylow $p$-subgroups for all $P \leq S$.

More precisely, given $P \leq S$ we show that $\Aut_{\FF}(P)$ has a normal subgroup of finite index which is isomorphic to a discrete $p$-torus. Let $x \in Z(P)$ be an element of order $p$. As $\Aut_{\FF}(P) \cong \Aut_{\FF}(P')$ if $P'$ is $\FF$-conjugate to $P$, by conditions (i) and (ii) in the statement we may assume that $x \in \setp$. Consider the subgroup $\Omega_1 = \Omega_1(Z(P))$, generated by all elements of order $p$ in $Z(P)$. This is a characteristic subgroup of $P$, and hence $\Aut_{\FF}(P)$ acts on $\Omega_1$. Moreover, $\Omega_1$ is a finite subgroup of $Z(P)$, and this implies that $C_{\Aut_{\FF}(P)}(\Omega_1)$ is a normal subgroup of $\Aut_{\FF}(P)$ of finite index.

Note that $C_{\Aut_{\FF}(P)}(\Omega_1) \leq \Aut_{C_{\FF}(P)}(P)$ just by definition of $C_{\FF}(x)$. As the latter is a saturated fusion system by condition (iii) in the statement, we know that $\Out_{C_{\FF}(x)}(P)$ is a finite group, and this implies that $\Aut_{C_{\FF}(P)}(P)$ contains a normal subgroup of finite index which is isomorphic to a discrete $p$-torus. Let $H \leq \Aut_{C_{\FF}(P)}(P)$ denote such subgroup. Since $\Aut_{C_{\FF}(P)}(P)$ has finite index in $\Aut_{\FF}(P)$, it follows that $H$ also has finite index in $\Aut_{C_{\FF}(P)}(P)$. Furthermore, as $H$ is both infinitely $p$-divisible and a discrete $p$-toral subgroup of $\Aut_{C_{\FF}(P)}(P)$, it follows that $H$ is normal in $\Aut_{\FF}(P)$.

Notice that the subgroup $H$ above is a subgroup of $\Aut_{C_S(x)}(P) \leq \Aut_S(P)$, and hence it follows that $\Out_{\FF}(P)$ is a finite group. In addition, \cite[Lemma 8.1]{BLO3} applies now to show that $\Aut_{\FF}(P)$ has Sylow $p$-subgroups. We are ready to prove axioms (I) and (II). We first prove axiom (II), as we need it when proving axiom (I).

\textbf{Axiom (II):} If $P \leq S$ and $\varphi \in \Hom_\FF(P, S)$ are such that $\varphi(P)$ is fully $\FF$-centralized, and if we set
\[
N_\varphi = \{ g \in N_S(P) \mid \varphi \circ c_g \circ \varphi^{-1} \in \Aut_S(P') \},
\]
then there is $\widetilde{\varphi} \in \Hom_{\FF}(N_{\varphi}, S)$ such that $\widetilde{\varphi}|_P = \varphi$.

Choose $x' \in Z(P')$ of order $p$ and which is fixed under the action of $\Aut_S(P')$, which exists by Lemma~\ref{lemma}. Write $x = 
\varphi^{-1}(x') \in
Z(P)$ and note that, for all $g \in N_\varphi$, the morphism $\varphi \circ c_g \circ \varphi^{-1} \in \Aut_S(P')$ fixes $x'$, thus $c_g(x) = x$. 
Hence,
\begin{itemize}
\item[(A)] $x \in Z(N_\varphi)$, which implies $N_\varphi \leq C_S(x)$. Also, we have $N_S(P') \leq C_S(x')$.
\end{itemize}
Let $y \in \setp$ be $\FF$-conjugate to $x$ and $x'$, whose existence is guaranteed by property (i) of the set $\setp$. Also, by property (ii) of $\setp$, there exist $\rho \in \Hom_\FF(C_S(x), C_S(y))$ and $\rho' \in \Hom_\FF(C_S(x'), C_S(y))$ such that $\rho(x) = y = \rho'(x')$. Set also $Q = \rho(P)$ 
and $Q' =
\rho'(P')$. Since $P$ is fully $\FF$-centralized and $C_S(P) \leq C_S(x)$, it follows that
\begin{itemize}
\item[(B)] $\rho'(C_{C_S(x')}(P')) = \rho'(C_S(P')) = C_S(Q') = C_{C_S(y)}(Q')$.
\end{itemize}
Set $\omega = \rho' \circ \varphi \circ \rho^{-1} \in \Iso_\FF(Q, Q')$. By construction, $\omega(y) = y$, and thus $\omega \in \Iso_{C_\FF(y)}(Q, Q')$. 
Since $P'$ is
fully centralized in $\FF$, property (B) implies that $Q'$ is fully centralized in $C_\FF(y)$. Then, we can apply axiom (II) of saturated fusion systems on 
$\omega$ as a
morphism in $C_\FF(y)$, which is a saturated fusion system by property (iii) of $\setp$. We obtain that $\omega$ extends to some 
$\widetilde{\omega} 
\in
\Hom_{C_\FF(y)}(N_\omega, C_S(y))$, where
\[
N_\omega = \{ g \in N_{C_S(y)}(Q) \mid \omega \circ c_g \circ \omega^{-1} \in \Aut_{C_S(y)}(Q') \}.
\]
Note that, for all $g \in N_\varphi \leq C_S(x)$, we have, by property (A):
\begin{align*}
c_{\omega(\rho(g))} &= \omega \circ c_{\rho(g)} \circ \omega^{-1} = (\omega \circ \rho) \circ c_g \circ (\omega \circ \rho)^{-1} = \\
&= (\rho' \circ \varphi) \circ c_g \circ (\rho' \circ \varphi)^{-1} = c_{\rho'(h)} \in \Aut_{C_S(y)}(Q')
\end{align*}
for some $h \in N_S(P')$ such that $\varphi \circ c_g \circ \varphi^{-1} = c_h$. In particular, $\rho(N_\varphi) \leq N_\omega$. Then, we obtain $\omega(\rho(g)) = \rho'(h) l'$, for some $l' = C_S(Q')$. By property (B) we know that $C_S(Q') = \rho'(C_S(P'))$, so in fact $\omega(\rho(g)) = \rho'(h)\rho'(l)$. Since $N_S(P') \leq C_S(x')$, we obtain $\widetilde{\omega}(\rho(N_\varphi)) \leq \rho'(N_{C_S(x)}(P'))$. We can then define
\[
\widetilde{\varphi} = (\rho')^{-1} \circ (\widetilde{\omega} \circ \rho)|_{N_\varphi} \in \Hom_\FF(N_\varphi, S)
\]
which clearly satisfies axiom (II) above.

\textbf{Axiom (I):} For all $P \leq S$ which is fully normalized in $\FF$, $P$ is fully centralized in $\FF$, $\Out_{\FF}(P)$ is finite, and $\Out_S(P) \in \Syl_p(\Out_{\FF}(P))$.

Let $P \leq S$ be fully normalized in $\FF$, with $P \neq 1$. As $\Aut_{\FF}(P)$ has Sylow $p$-subgroups, we have
\[
\Out_S(P) \in \Syl_p(\Out_{\FF}(P)) \Longleftrightarrow \Aut_S(P) \in \Syl_p(\Aut_{\FF}(P)).
\]
Before we proceed with the rest of the proof of axiom (I), we need to define two sets and to prove two auxiliary results. Consider the sets $U$ and $U_0$ defined as
\begin{flalign*}
U = \lbrace (P,x) \mid & P \leq S \text{ is finite, } \exists \Gamma \in \Syl_p(\Aut_{\FF}(P)) \text{ such that } \Aut_S(P) \leq \Gamma \text{ and }\\
 & x \in Z(P)^{\Gamma} \text{ has order } p \rbrace \text{ and }\\
U_0 = \lbrace &(P, x) \in U \mid x \in \setp \rbrace .
\end{flalign*}
Note that for each nontrivial finite subgroup $P \leq S$, there is some $x \in P$ such that $(P, x) \in U$, since every action of a finite $p$-group 
on $Z(P)$ has nontrivial fixed set. Then, we have the following:
\begin{itemize}
\item[(C)] If $(P, x) \in U_0$ and $P$ is fully centralized in $C_\FF(x)$, then $P$ is fully centralized in $\FF$.
\end{itemize}
Assume otherwise and let $P' \in P^\FF$ be fully centralized in $\FF$ and $\varphi \in \Iso_\FF(P, P')$. Write also $x' = \varphi(x) \in Z(P')$. By 
property
(ii) of the set $\setp$, there is $\rho \in \Hom_\FF(C_S(x'), C_S(x))$ such that $\rho(x') = x$, since we are assuming $x \in \setp$. Note that $P' 
\leq C_S(x')$ and set then $P'' = \rho(P')$. In particular, $\rho \circ \varphi \in \Iso_{C_\FF(x)}(P, P'')$ and therefore $P''$ is 
$C_\FF(x)$-conjugate to $P$. Also, since $\gen{x'} \leq P'$, we have $C_S(P') \leq C_S(x')$ and then $\rho$ sends $C_S(P')$ injectively into 
$C_S(P'')$. Hence,
\[
|C_S(P)| < |C_S(P')| \leq |C_S(P'')|.
\]
However, the equalities $C_S(P) = C_{C_S(x)}(P)$ and $C_S(P'') = C_{C_S(x)}(P'')$ contradict the assumption that $P$ is fully centralized in
$C_\FF(x)$, and this proves property (C).

Note that, by definition, $N_S(P) \leq C_S(x)$ for all $(P, x) \in U$, and hence
\[
\Aut_{C_S(x)}(P) = \Aut_S(P).
\]
Also, if $(P, x) \in U$ and $\Gamma \in \Syl_p(\Aut_\FF(P))$ is as in the definition of $U$, then $\Gamma \leq \Aut_{C_\FF(x)}(P)$. In particular, we 
have
\begin{itemize}
\item[(D)] For all $(P, x) \in U$,
\[
\Aut_S(P) \in \Syl_p(\Aut_\FF(P)) \Longleftrightarrow \Aut_{C_S(x)}(P) \in \Syl_p(\Aut_{C_\FF(x)}(P)).
\]
\end{itemize}

We are ready to check that $\FF$ satisfies axiom (I) of saturated fusion systems. Fix $P \leq S$, $P \neq 1$, a finite subgroup fully normalized in $\FF$. By definition, $|N_S(P)| \geq |N_S(P')|$ for all $P' \in P^\FF$. Choose $x \in Z(P)$ such 
that $(P,
x) \in U$ and let $\Gamma \in \Syl_p(\Aut_\FF(P))$ be such that $\Aut_S(P) \leq \Gamma$ and such that $x \in Z(P)^\Gamma$. Then, by properties (i) 
and (ii) of the set $\setp$, there is some $y \in \setp$ and $\rho \in \Hom_\FF(C_S(x), C_S(y))$ such that $\rho(x) = y$. Set $P' = \rho(P)$ and 
$\Gamma' = \rho \circ \Gamma \circ \rho^{-1} \in \Syl_p(\Aut_\FF(P'))$.

Note that, since $P$ is assumed to be fully normalized in $\FF$, it follows that $\rho(N_S(P)) = N_S(P')$. Therefore, $\Aut_S(P') \leq \Gamma'$ and $y 
\in
Z(P')^{\Gamma'}$, hence $(P', y) \in U_0$. Since $N_S(P') \leq C_S(y)$, the maximality of $|N_S(P')| = |N_{C_S(y)}(P')|$ implies that $P'$ is fully 
normalized
in $C_\FF(y)$.

Now, by property (iii) of $\setp$, the fusion system $C_\FF(y)$ is saturated. Then, since $P'$ is fully normalized in $C_\FF(y)$, we have that $P'$ 
is 
fully centralized in $C_\FF(y)$ and $\Aut_{C_S(y)}(P') \in \Syl_p(\Aut_{C_\FF(y)}(P'))$. Therefore, by properties (C) and (D), $P'$ is fully centralized in 
$\FF$ and $\Aut_S(P') \in \Syl_p(\Aut_\FF(P'))$.

Recall that $P$ is fully $\FF$-normalized. Since $P'$ is fully centralized in $\FF$, and since we have already proved that axiom (II) holds on $\FF$, we may apply \cite[Lemma~2.2]{BLO6} to deduce that $P$ is also fully centralized in $\FF$ and $\Aut_S(P) \in \Syl_p(\Aut_\FF(P))$. Finally, it is shown in \cite{BLO3} that if axiom (I) holds for all finite fully normalized subgroups, then axiom (I) holds for all fully normalized subgroups.
\end{proof}


\section{Some families of exotic \texorpdfstring{$p$}{p}-local finite groups }\label{sec:finite}

In this section we present some examples of exotic $p$-local finite groups, for $p \geq 3$. That is, we describe some $p$-local finite groups which are not realized by any finite group. The examples in this section are organized as follows. For $p = 3$, we present a family $\{(S_k, \FF_k, \LL_k)\}_{k \geq 2}$. This family was first studied in \cite{DRV}, where the exoticness was also proved. Inspired by the family for $p = 3$, we construct, for $p \geq 5$, two families $\{(S_k, \FF_k, \LL_k)\}_{k \geq 2}$ and $\{(S_k, \widetilde{\FF}_k, \widetilde{\LL}_k)\}_{k \geq 2}$. As it turns out, the latter family was already studied in \cite{BLO2}, where the authors also prove exoticness.

\subsection{A family of exotic \texorpdfstring{$3$}{3}-local finite groups}\label{sec:finite3}

For this subsection we focus on the prime $p = 3$. We are interested in the $3$-local finite groups corresponding to the saturated fusion systems over $S_k$ denoted in \cite[Table 6]{DRV} as $\FF(3^{2k+1},3)$, with $k \geq 2$. These saturated fusion systems are over $3$-groups which can be expressed as an split extension (see Notation~\ref{defiTk} below):
\begin{equation}\label{eq:S_3}
1 \longrightarrow T_k \longrightarrow S_k \longrightarrow C_3 \longrightarrow 1
\end{equation}
where $T_k \cong (C_{3^k})^2$, with a fixed (non-trivial) action of $C_3$ on $T_k$. These groups are noted as $B(3,2k+1;0,0,0)$ in \cite[Appendix A]{DRV}. As we present in later sections a generalization of these $3$-groups for all odd primes, we do not give too many details and, instead, we refer the reader to \cite{DRV} for further details. Thus, consider the action of $\GL_2(\F_3)$ on $T_{k}$ as explained in \cite[Lemma A.17]{DRV}, $\zeta$ a generator of the center of $S_{k}$, $s$ an element (of order $3$) not in $T_{k}$ and $V\definicio \langle \zeta,s \rangle$ an elementary abelian $3$-group of rank $2$.


\begin{lemma}\label{elements-not-in-maximal-torus-conjugated-to-s}
Let $s_1^i s_2^j s \in S_{k}$ be an element not in the maximal torus. Then, $s_1^i s_2^j s$ is conjugated to $s$ if and only if $i \equiv 0 \mod{3}$.
\end{lemma}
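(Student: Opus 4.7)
The strategy is a direct computation in the semidirect product $S_k = T_k \rtimes \langle s \rangle$. Writing $T_k \cong (C_{3^k})^2$ additively with respect to the basis $\{s_1, s_2\}$, the conjugation action of $s$ on $T_k$ is given by the order-$3$ matrix $B$ from \cite[Lemma A.17]{DRV}, so that $s \cdot t \cdot s^{-1} = B(t)$ for each $t \in T_k$.

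The first key step is to show that for every $x \in T_k$, the $S_k$-conjugacy class of $xs$ equals the coset $\bigl(x + \mathrm{Im}(I - B)\bigr) \cdot s$ of $T_k \cdot s$. Using that $T_k$ is abelian, one derives the conjugation formula
\[
t \cdot (xs) \cdot t^{-1} = \bigl(x + (I - B)(t)\bigr) \cdot s \qquad \text{for all } t \in T_k,
\]
which already realises the whole coset via $T_k$-conjugation alone. Then I would check that conjugation by $s$ itself stays inside the coset: since $s \cdot (xs) \cdot s^{-1} = B(x) \cdot s$ and $B(x) - x = -(I - B)(x) \in \mathrm{Im}(I - B)$, no new elements are produced. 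Because $T_k$ and $s$ generate $S_k$, the full $S_k$-conjugacy class of $xs$ is exactly this coset.

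It follows that $s_1^i s_2^j s$ is $S_k$-conjugate to $s$ precisely when $(i, j) \in \mathrm{Im}(I - B)$, and the final step is to identify this image with $\{(i, j) \in T_k : i \equiv 0 \pmod 3\}$. Using the explicit matrix $B$ from \cite[Lemma A.17]{DRV}, one checks that $\det(I - B) = 3$, so $\mathrm{Im}(I - B)$ has index exactly $3$ in $T_k$; reading off the columns of $I - B$ then yields two generators of $\mathrm{Im}(I - B)$ whose first coordinates are both divisible by $3$, and a rank comparison completes the identification.

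The argument is elementary: the only place a genuine choice intervenes is the explicit form of the action matrix $B$ (and the convention for the generators $s_1, s_2$) in \cite[Lemma A.17]{DRV}; every other step is forced by the split extension structure of $T_k \rtimes \langle s \rangle$ together with the abelianness of $T_k$.
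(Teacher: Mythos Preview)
Your proposal is correct and follows essentially the same idea as the paper's proof: both identify the $S_k$-conjugacy class of $s$ inside the coset $T_k\cdot s$ via the semidirect-product structure. The paper simply carries this out by an explicit one-line computation---conjugating $s$ by $s_1^{\alpha}s_2^{\beta}s$ yields $s_1^{3\beta}s_2^{-\alpha+3\beta}s$, and then $(\alpha,\beta)=(i-j,l)$ realises any target with $i=3l$---whereas you package the same computation as ``conjugacy class $=$ coset of $\mathrm{Im}(I-B)$'' and then identify that image. One small caution: the matrix you call $B$ must be the action matrix in the $\{s_1,s_2\}$ basis (denoted $A$ in Equation~\eqref{action_A} here, with $I-A=\left(\begin{smallmatrix}0 & 3\\ -1 & 3\end{smallmatrix}\right)$); in the $\{v_1,v_2\}$ basis the columns of $I-B$ do \emph{not} have first coordinate divisible by~$3$, so make sure your reference to \cite[Lemma~A.17]{DRV} really points to the $\{s_1,s_2\}$-basis matrix.
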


\begin{proof}
Note that if we conjugate $s$ by any element $s_1^\alpha s_2^\beta s \in S_{k}$ we obtain $s_1^{3\beta} s_2^{-\alpha + 3\beta} s$, and $3\beta 
\equiv 0 \mod{3}$. Conversely, if $i = 3l$, conjugating $s$ by $s_1^{i-j} s_2^l s$ we obtain $s_1^i s_2^j s$.
\end{proof}

\begin{theorem}[\cite{DRV}]
Let $S_k$ a finite $3$-group as in Equation~\eqref{eq:S_3} and consider $\eta$, $\omega$ the outer autmorphisms of $S_k$ as in \cite[Notation~5.9]{DRV}.
The following automorphisms groups generate simple saturated fusion systems $(S_k, \FF_k) = (S_{k},\FF(3^{2k+1},3))$ in the sense of Remark~\ref{generated}:
\begin{itemize}
\item $\Aut_{\FF_k}(S_k)=\langle \eta, \omega, \Inn(S_k)\rangle$, getting $\Out_{\FF_k}(S_k)\cong C_2 \times C_2$,
\item $\Aut_{\FF_k}(T_{k})\cong \GL_2(\F_3)$ and
\item $\Aut_{\FF_k}(V))\cong \GL_2(\F_3)$.
\end{itemize}
Moreover, $S_k$, $T_k$ and $V$ are representatives of the only $\FF_k$-conjugacy classes of centric radical subgroups of $S_k$.
\end{theorem}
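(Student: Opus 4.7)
The argument splits into three stages: saturation, classification of centric radical subgroups, and simplicity. For saturation I would invoke the finite version of Proposition~\ref{1.1LO}, namely \cite[Proposition~1.1]{LO}, with $\setp=\{\zeta,s\}$ as representatives of the $\FF_k$-conjugacy classes of order-$3$ elements of $S_k$: every order-$3$ element of $T_k$ is $\FF_k$-conjugate to $\zeta$ via the transitive $\GL_2(\F_3)$-action on $\Omega_1(T_k)\setminus\{1\}$, and every order-$3$ element of $S_k\setminus T_k$ is $S_k$-conjugate to $s$ by Lemma~\ref{elements-not-in-maximal-torus-conjugated-to-s}. Condition (ii) is trivial for $\zeta$ and for $s$ follows from $C_{S_k}(s)=V$ together with the transitive $\GL_2(\F_3)$-action on $V\setminus\{1\}$, which supplies the required conjugating morphisms on centralizers. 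Condition (iii), the saturation of $C_{\FF_k}(s)$ over $V\cong C_3\times C_3$, is immediate since $C_{\FF_k}(s)$ is the fusion system on the abelian group $V$ generated by the stabilizer of $s$ in $\GL_2(\F_3)$.

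Before applying the criterion one must also verify that the generating data is internally consistent: the restrictions of $\Aut_{\FF_k}(S_k)=\langle\eta,\omega,\Inn(S_k)\rangle$ to $T_k$ and to $V$ must fall inside the prescribed copies of $\GL_2(\F_3)$, and the two $\GL_2(\F_3)$-actions must agree on $T_k\cap V=\langle\zeta\rangle$. This is a direct matrix computation using \cite[Notation~5.9]{DRV} and \cite[Lemma~A.17]{DRV}.

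For the classification of centric radical subgroups, I would use the enumeration of subgroups of $S_k$ in \cite[Appendix~A]{DRV}. Recall that $P\leq S_k$ is $\FF_k$-centric if $C_{S_k}(P')=Z(P')$ for every $\FF_k$-conjugate $P'$ of $P$, and $\FF_k$-radical if $O_3(\Out_{\FF_k}(P))=1$. Each proper subgroup of $T_k$ other than $\langle\zeta\rangle$ has centralizer $T_k\supsetneq P$, failing centricity; rank-$1$ subgroups of $V$ have centralizer $V\supsetneq P$, also failing centricity. Any subgroup strictly between $T_k$ (or $V$) and $S_k$ yields $\Out_{\FF_k}(P)$ a $3$-group, failing radicality. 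After eliminating these, only $S_k$, $T_k$ and $V$ remain, and all three are centric and radical: $\Out_{\FF_k}(T_k)\cong\Out_{\FF_k}(V)\cong\GL_2(\F_3)$ has trivial $O_3$, and $\Out_{\FF_k}(S_k)\cong C_2\times C_2$ is a $3'$-group.

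For simplicity, I would apply Lemma~\ref{reducedsimple}. A proper strongly $\FF_k$-closed subgroup of $S_k$ must be preserved by both $\GL_2(\F_3)$-actions; irreducibility of these actions on $T_k/\Phi(T_k)$ and on $V$, combined with the fact that $T_k$ and $V$ generate $S_k$, forces such a subgroup to be all of $S_k$. Reducedness follows: $O_3(\FF_k)$ lies in every centric radical subgroup, hence in $T_k\cap V=\langle\zeta\rangle$, but $\GL_2(\F_3)$ fixes no nonzero vector of $V$, so $O_3(\FF_k)=1$; and $O^3(\FF_k)=O^{3'}(\FF_k)=\FF_k$ because $\GL_2(\F_3)$ has no proper normal subgroup of $3$-power index, while a standard hyperfocal/residual computation handles the remaining $C_2\times C_2$ outer data. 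The main obstacle will be the careful subgroup-by-subgroup case analysis to rule out the many non-centric or non-radical candidates; once this is in hand, saturation and simplicity both reduce to short computations using the explicit matrix actions.
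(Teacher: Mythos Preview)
Your saturation argument has a genuine error. You take $\setp=\{\zeta,s\}$, but you also observe that $\Aut_{\FF_k}(V)=\GL_2(\F_3)$ acts transitively on $V\setminus\{1\}$, so $\zeta$ and $s$ are $\FF_k$-conjugate. Condition (ii) of \cite[Proposition~1.1]{LO} then demands, for $x=\zeta$ and $y=s\in\setp$, a morphism $\rho\in\Hom_{\FF_k}(C_{S_k}(\zeta),C_{S_k}(s))=\Hom_{\FF_k}(S_k,V)$ with $\rho(\zeta)=s$; no such injection exists. The correct choice is $\setp=\{\zeta\}$ (or, as in the proof of Theorem~\ref{FFpissaturated}, the order-$3$ elements of $T_k$), and then the entire weight of condition (iii) falls on showing that $C_{\FF_k}(\zeta)$ is a \emph{saturated} fusion system over $C_{S_k}(\zeta)=S_k$. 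This is the substantive step---one must identify $C_{\FF_k}(\zeta)$ with $\FF_{S_k}(T_k\rtimes\Sigma_3)$ as in Lemma~\ref{centralZ}---and you do not address it. The paper sidesteps all of this by citing \cite[Theorem~5.10]{DRV} for saturation and exoticness directly.

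For simplicity, the paper does not go through Lemma~\ref{reducedsimple}. Instead it shows there is no proper nontrivial strongly $\FF_k$-closed subgroup (via \cite[Theorem~8.1]{AB} to get $Z(S_k)\leq P$, then $s\in P$ by $\FF_k$-conjugacy, then \cite[Lemma~2.2]{Blackburn} to force $[S_k:P]\leq 3$, and finally an explicit $\GL_2(\F_3)$-conjugation in $T_k$), and then invokes \cite[Lemma~5.72]{Craven} together with the classification in \cite[Theorem~5.10]{DRV} to rule out any normal subsystem of index prime to $3$. Your alternative via Lemma~\ref{reducedsimple} is viable, but your ``irreducibility'' sketch for the strongly-closed step is incomplete: for $k\geq 2$ the central element $\zeta$ lies in $\Phi(T_k)=T_k^3$, so containing $\zeta$ gives no information about the image of $P\cap T_k$ in $T_k/\Phi(T_k)$, and irreducibility there does not bite. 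You still need something like Blackburn's lemma to get $[S_k:P]\leq 3$ before a $\GL_2(\F_3)$-move on $T_k$ finishes the job.
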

\begin{proof}
The saturation and exoticness properties are proven in \cite[Theorem 5.10]{DRV}. For the simplicity property, note that $\FF(3^{2k+1},3)$ has no proper 
nontrivial strongly closed subgroups. Indeed, let $P \trianglelefteq S_{k}$ a nontrivial strongly closed subgroup. By \cite[Theorem 8.1]{AB}, $P$ 
must intersect the center in a nontrivial subgroup. Since the center of $S_{k}$ has order 3, we must have $Z(S_{k}) \leq P$. Moreover, since 
$\zeta$ is $\FF(3^{2k+1},3)$-conjugated to $s$, we must have also $s \in P$, since $P$ is strongly closed. Then, by \cite[Lemma 2.2]{Blackburn}, $P$ must be 
of index at most $3$ in $S_{k}$.


In fact, by Lemma~\ref{elements-not-in-maximal-torus-conjugated-to-s}, $P$ must contain the subgroup generated by $s$ and all elements $s_1^{3l}s_2^j \in 
T_{k}$, which is an index 3 subgroup of $S_{k}$. Then, using that the automorphism group of $T_{k}$ is all $\GL_2(\F_3)$, we can conjugate, for 
example, the element $s_1^{-3}s_2^{-2}$ to $s_1s_2$, obtaining that $P$ contains also elements not conjugated to $s$. Since $P$ had index at most 3, 
we obtain that $P$ must be equal to $S_{k}$.

Hence, if there is a proper nontrivial normal subsystem of $\FF(3^{2k+1},3)$, it has to be over the same group $S_{k}$, by condition (N1) of Definition~\ref{definormal}. Then, by \cite[Lemma 5.72]{Craven}, we have that the normal subsystem has to be of index prime to 
$p$ in $\FF(3^{2k+1},3)$, but by the classification in \cite[Theorem 5.10]{DRV}, there is no subsystem of index prime to $p$ in $\FF(3^{2k+1},3)$.
\end{proof}

\subsection{Some \texorpdfstring{$p$}{p}-groups of maximal class}
The Sylow $3$-subgroups listed in the previous subsection are particular cases of maximal nilpotency class finite $p$-groups. These groups were classified by N.~Blackburn and fit in a family defined for every prime $p\geq 3$ in \cite[Page 88]{Blackburn} . We recall here the presentation given there.

Let $p$ be an odd prime, to remain fixed for the rest of this section unless otherwise specified. For $k \geq 2$, define $S_{k}$ as the group of order $p^{(p-1)k+1}$ with parameters $\alpha=\beta=\gamma=\delta=0$
in \cite{Blackburn}. This means that $S_{k}$ can be given by the following presentation: $\{s,s_1, s_2, \dots, s_{(p-1)k}\}$ is a generating set, with relations
\begin{flalign}
& [s,s_{i-1}] = s_i   \mbox{ for $i=2, \dots, (p-1)k$}\label{commutadors}\\
& [s_1,s_i] =  1   \mbox{ for $i=2, \dots, (p-1)k$}\label{s1commuta}\\
& s^p  =  1  \label{ordres}\\
& s_i^{\binom{p}{1}} s_{i+1}^{\binom{p}{2}} \cdots s_{i+p-1}^{\binom{p}{p}} =  1  \mbox{ for $i=1, \dots,
(p-1)k$}\label{relacionssi}
\end{flalign}
In the last equation, we are assuming $s_j=1$ for $j>(p-1)k$.

Consider 
$\gamma(S_{k})\definicio \langle s_i \rangle_{1\leq i \leq (p-1)k}$
 and the lower central series \[\gamma_2(S_{k})=[S_{k},S_{k}], \, 
\gamma_i(S_{k})=[\gamma_{i-1}(S_{k}),S_{k}].\]
The following proposition gives two properties of $S_{k}$ which will allow us to see it as an extension of a finite torus by an element of order $p$:
\begin{proposition}\label{propSpk} The following holds. 
\begin{enumerate}
\item The subgroup $\gamma(S_{k})$ is isomorphic to $(C_{p^k})^{p-1}$
with generators $s_1, \dots, s_{p-1}$.
\item There are $p$ conjugacy classes of subgroups of order $p$ not contained in $\gamma(S_{k})$. The subgroups $\langle s s_1^i\rangle$, for $i \in \{0, 1, \dots, p-1\}$, form a set of representatives of each conjugacy class.
\end{enumerate}
\end{proposition}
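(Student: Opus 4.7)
For part (a), the plan is threefold. First, use relation \eqref{relacionssi} with $i=1$, which reads $s_1^p s_2^{\binom{p}{2}} \cdots s_{p-1}^{\binom{p}{p-1}} s_p = 1$ since $\binom{p}{p}=1$, to express $s_p$ as a word in $s_1, \ldots, s_{p-1}$; iterating with $i=2,3,\ldots$ then expresses every $s_j$ with $j \geq p$ in terms of $s_1, \ldots, s_{p-1}$, so these $p-1$ elements generate $\gamma(S_k)$. Second, establish abelianness of $\gamma(S_k)$: this is the structural content of Blackburn's classification \cite{Blackburn} for the parameter choice $\alpha = \beta = \gamma = \delta = 0$, which can be cited directly. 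Alternatively, one constructs an explicit candidate $T \rtimes \langle \sigma \rangle$ with $T = (C_{p^k})^{p-1}$ and an order-$p$ automorphism $\sigma$ designed so that all the relations \eqref{commutadors}--\eqref{relacionssi} are satisfied, and concludes by the order count $|S_k| = p^{(p-1)k+1}$ that the natural surjection is an isomorphism. Third, the direct-product structure and the order $p^k$ of each $s_j$ for $j = 1, \ldots, p-1$ are then immediate from the model.

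For part (b), write $S_k = T \rtimes \langle s \rangle$ with $T = \gamma(S_k)$ abelian and $\sigma$ denoting the action of $s$ on $T$. Any subgroup of order $p$ outside $T$ contains a unique generator of the form $ts$ with $t \in T$ (by raising any generator to a suitable power coprime to $p$). Using abelianness, $(ts)^p = t \cdot \sigma(t) \cdots \sigma^{p-1}(t) = N(t)$, where $N := \sum_{j=0}^{p-1} \sigma^j$. Computing $\sigma$ on the basis $s_1, \ldots, s_{p-1}$ from the commutator relations \eqref{commutadors} and expanding, one finds $N(s_i) = \prod_{k=1}^{p} s_{i+k-1}^{\binom{p}{k}}$, which is exactly the left-hand side of relation \eqref{relacionssi} and hence trivial in $T$. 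Therefore every element of $Ts$ has order $p$, and the subgroups of order $p$ outside $T$ biject with $T$.

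For the conjugacy count, conjugation of $ts$ by $u \in T$ yields $(t + (\sigma-1)(u))s$ (in additive notation), while conjugation by $s$ sends $ts$ to $\sigma(t)s$. The $T$-conjugacy classes inside $Ts$ thus correspond to cosets of $\mathrm{Im}(\sigma - 1)$, and the $\langle s \rangle$-action on $T / \mathrm{Im}(\sigma - 1)$ is trivial because $\mathrm{Im}(\sigma^{-1} - 1) = \mathrm{Im}(\sigma - 1)$. A direct calculation of the columns of $\sigma - 1$ on $\{s_1, \ldots, s_{p-1}\}$, using the formula for $s_p$ from step (a), gives $\mathrm{Im}(\sigma - 1) = \langle p s_1, s_2, \ldots, s_{p-1} \rangle$. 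Hence $T / \mathrm{Im}(\sigma - 1) \cong C_p$, with cosets represented by $i s_1$ for $i = 0, 1, \ldots, p-1$; these match the subgroups $\langle s s_1^i \rangle$ stated in the proposition (the class of $s s_1^i$ coincides with $i s_1$ modulo $\mathrm{Im}(\sigma - 1)$).

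\textbf{Main obstacle.} The hardest step is verifying that $\gamma(S_k)$ is abelian: granting this, everything else reduces to transparent linear algebra over $\Z/p^k$. The plan is to invoke Blackburn's classification for this structural fact rather than reprove it via commutator calculus from scratch.
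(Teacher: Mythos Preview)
Your proposal is correct, but the two parts differ from the paper in opposite directions.

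For part (a), you flag abelianness of $\gamma(S_k)$ as the main obstacle and propose either citing Blackburn or constructing an explicit model $T \rtimes \langle\sigma\rangle$ and matching orders. The paper instead proves abelianness in three lines directly from the presentation: relation~\eqref{s1commuta} gives $s_1 \in Z(\gamma)$; conjugation by $s$ is an automorphism of $\gamma$ and hence preserves $Z(\gamma)$; and $c_s(s_i) = s_i s_{i+1}$ by~\eqref{commutadors}, so inductively every $s_i$ lies in $Z(\gamma)$. Once $\gamma$ is known to be abelian, the paper reads off the isomorphism type from the $(p-1)k \times (p-1)k$ relation matrix coming from~\eqref{relacionssi}. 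So the obstacle you identify is not really there, and your model-building route, while valid, is heavier than needed.

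For part (b), your approach is cleaner than the paper's. You work systematically with the norm $N = \sum_j \sigma^j$ and with $\mathrm{Im}(\sigma-1)$, reducing the conjugacy count to linear algebra over $\Z/p^k$; the observation that $N(s_i)$ is exactly the word in relation~\eqref{relacionssi}, hence trivial, is an elegant way to see that every $ts$ has order $p$. The paper argues more by hand: it exhibits specific conjugations showing $s$ is $S_k$-conjugate to $s s_2^{i_2}\cdots s_{p-1}^{i_{p-1}}$ and (via~\eqref{relacionssi} with $i=1$) to $s s_1^{p i_1}$ modulo $\gamma_2(S_k)$, and then cites a formula from \cite{Blackburn} to verify $(ss_1^i)^p = 1$. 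Your framing makes the structure more transparent and avoids that external citation for the order computation. One small point worth making explicit in your write-up: since conjugation preserves the coset $Ts$, and the only element of $\langle t's\rangle$ lying in $Ts$ is $t's$ itself, conjugacy of subgroups $\langle ts\rangle$ and $\langle t's\rangle$ is equivalent to conjugacy of the elements $ts$ and $t's$, which justifies passing from element-orbits to subgroup-classes.
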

\begin{proof}
Let us see first that the center of $\gamma \definicio \gamma(S_{k})$, that we denote by $Z(\gamma)$, is all of $\gamma$.
From Equation~\eqref{s1commuta} we obtain that $s_1\in Z(\gamma)$. Conjugation by $s$ induces an automorphism of $\gamma$, so $c_s(Z(\gamma))=Z(\gamma)$.
From Equation~\eqref{commutadors}, $c_s(s_1)=s_1s_2 \in Z(\gamma)$, which implies that $s_2\in Z(\gamma)$. 
As $c_s(s_i)=s_is_{i+1}$ we can iterate this argument and $Z(\gamma)=\gamma$. So $\gamma$ is abelian.

Now we have to classify $\gamma$, an abelian group generated by $\{ s_i \}_{1\leq i \leq (p-1)k}$, subject to $(p-1)k$ relations given by Equation~\eqref{relacionssi}. Using additive notation, these elements are given by the $(p-1)k\times (p-1)k$ upper triangular matrix:
$$
R=\begin{pmatrix}
\binom{p}{1} & \binom{p}{2} & \cdots & \binom{p}{p} & 0 & \cdots & 0 \\
0 & \binom{p}{1} & \cdots & \binom{p}{p-1} & \binom{p}{p} & \cdots & 0 \\
\vdots & \vdots  & \ddots & \vdots & \vdots & \ddots & \vdots \\
0 & 0 & \cdots & \binom{p}{1} & \binom{p}{2} & \cdots & \binom{p}{p} \\
0 & 0 & \cdots & 0 & \binom{p}{1} & \cdots & \binom{p}{p-1} \\
\vdots & \vdots  & \ddots & \vdots & \vdots & \ddots & \vdots \\
0 & 0 & \cdots & 0 & 0 & \cdots & \binom{p}{1}
\end{pmatrix}
$$
where $\binom{p}{1}=p$, $p$ divides $\binom{p}{k}$ if  $k\not\in\{0,p\}$ and $\binom{p}{p}=1$. Then:
\begin{itemize}
\item The order of $\gamma$ is the determinant of $R$, which is equal to $p^{(p-1)k}$. 
\item If we consider the bottom-right $(p-1)\times(p-1)$ submatrix of $R$, with all coefficients in the diagonal equals $p$ and all coefficients divisible by $p$, we get that $\langle s_m \rangle_{(p-1)(k-1)+1\leq m\leq (p-1)k}$ generate a subgroup which is a quotient of $(C_p)^{p-1}$. 
\item Consider now the bottom-right $2(p-1)\times2(p-1)$ submatrix of $R$, with all the coefficients $1$ in the last non-zero overdiagonal. This implies that $s_m$, for ${(p-1)(k-1)+1\leq m\leq (p-1)k}$ is a combination of $\langle s_m^p \rangle_{(p-1)(k-2)+1\leq m\leq (p-1)(k-1)}$, so  $\langle s_m\rangle_{(p-1)(k-2)+1\leq m\leq (p-1)(k-1)}=\langle s_m\rangle_{(p-1)(k-2)+1\leq m\leq (p-1)k}$ generate a quotient of $(C_{p^2})^{p-1}$.
\item Iterating this process, we get that $\gamma$ is a quotient of $(C_{p^k})^{p-1}$.
\end{itemize}
This quotient relation, together with the computation of the order of $\gamma$, implies that $\gamma = \langle s_m \rangle_{1\leq m \leq (p-1)} \cong (C_{p^k})^{p-1}$.

For the second part of the statement, notice that using Equation~\eqref{commutadors} we get that the element $s$ is conjugated to
$s s_2^{i_2} s_3^{i_3} \cdots  s_{p-1}^{i_{p-1}}$ for all $0\leq i_j \leq p^{k}-1$. Use now Equation~\eqref{relacionssi} for $i=1$ to see that conjugating $s$ by powers of $s_{p-1}$ we obtain all possible elements of the type $s s_1^{p i_1}$ modulo $\gamma_2(S_{k})$. Moreover $s$ is not conjugated to $s s_1^i$ for $1\leq i \leq p-1$ and then $s s_1^i$ is not conjugated to $s s^j$ for $i$ and $j$ such that $p\nmid (j-i)$.  Moreover $(s s_1^i)^p=1$: \cite[Page 83, Equation (39)]{Blackburn} with $\beta=0$, getting that $(s s_1^i)^p=s^p(s_1^ps_2^{\binom{p}{2}} \cdots s_p)^i$, which is equal to $1$ by Equations \eqref{ordres} and \eqref{relacionssi}. The result follows from the fact that these are representatives of all possible conjugacy classes of elements of order $p$ which are not in $\gamma(S_{k})$ 
\end{proof}

This proposition tells us that $S_{k}$ fits in a split extension:
\begin{equation}\label{extensionS}
1 \rightarrow (C_{p^k})^{p-1} \rightarrow S_{k} \rightarrow C_p \rightarrow 1
\end{equation}
and the action of $\langle s \rangle \cong C_p$ over $(C_{p^k})^{p-1}$ is given by the matrix
(using additive notation, hence with coefficients in $\Z/p^k$, on the generators $\{s_1, \dots, s_{p-1}\}$):
\begin{equation}
\label{action_A}
A=
\begin{pmatrix}
1 & 0 & 0 & \cdots & 0 & -{\binom{p}{1}} \\
1 & 1 & 0 & \cdots & 0 & -{\binom{p}{2}} \\
0 & 1 & 1 & \cdots & 0 & -{\binom{p}{3}} \\
\vdots & \vdots & \vdots & \ddots & \vdots & \vdots \\
0 & 0 & 0 & \cdots & 1 & -{\binom{p}{p-2}} \\
0 & 0 & 0 & \cdots & 1 & 1-{\binom{p}{p-1}}
\end{pmatrix}.
\end{equation}
Consider now $(\Z/p^k)^p$ generated by $e_1, \dots , e_p$ and the action of $\Sigma_p$, the symmetric group on $p$ letters, by permutation of
the elements of the basis. This action leaves invariant the submodule $M_k$, isomorphic to $(\Z/p^k)^{p-1}$,
generated by the basis $\langle v_1, \dots, v_{p-1}\rangle$, where
$v_1 \definicio e_1-e_2$, $v_2\definicio e_2-e_3$, \ldots , $v_{p-1}\definicio e_{p-1}-e_p$, so we get 
an action of $\Sigma_p$ on $(\Z/p^k)^{p-1}$, and allows us to
construct a split extension:
\begin{equation}
\xymatrix{
	1 \ar[r] & (C_{p^k})^{p-1} \ar[r] & (C_{p^k})^{p-1}\rtimes \Sigma_p \ar[r]  & \Sigma_p \ar[r] & 1
}.
\label{actionsigma}
\end{equation}
In the basis $\{v_1, v_2, \dots, v_{p-1}\}$ the permutation 
$(1,2, \dots, p)$ corresponds to the matrix:
\begin{equation}
\label{action_B}
B=
\begin{pmatrix}
0 & 0 & 0 & \cdots & 0 & -1 \\
1 & 0 & 0 & \cdots & 0 & -1 \\
0 & 1 & 0 & \cdots & 0 & -1 \\
\vdots & \vdots & \vdots & \ddots & \vdots & \vdots \\
0 & 0 & 0 & \cdots & 0 & -1 \\
0 & 0 & 0 & \cdots & 1 & -1
\end{pmatrix}
\end{equation}
\begin{lemma}
The actions of $C_p$ on $(C_{p^k})^{p-1}$ induced by matrices $A$ and $B$ from Equations \eqref{action_A} and \eqref{action_B} produce isomorphic split extensions of the form
\[
\xymatrix{
1 \ar[r] & (C_{p^k})^{p-1} \ar[r] & S_k \ar[r] & C_p \ar[r] & 1 \,.
}
\]
\end{lemma}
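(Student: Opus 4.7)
The plan is to show that $A$ and $B$ are conjugate in $\GL_{p-1}(\Z/p^k)$; any conjugating matrix $P$ is then a $C_p$-equivariant automorphism of the torus $(\Z/p^k)^{p-1}$, and the induced map $(t,c)\mapsto (Pt,c)$ gives an isomorphism between the two split extensions. The key observation is that $B$ is, by construction, the companion matrix of the polynomial $\Phi_p(x) := 1 + x + x^2 + \cdots + x^{p-1}$, so the module $M_B := ((\Z/p^k)^{p-1}, B)$ is cyclic as a $\Z/p^k[C_p]$-module, generated by $v_1$, with annihilator $(\Phi_p(x))$. It therefore suffices to exhibit a cyclic generator of $M_A := ((\Z/p^k)^{p-1}, A)$ with the same annihilator.

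The natural candidate is $e_1 = s_1$. I would define $\phi \colon M_B \to M_A$ to be the $\Z/p^k$-linear map determined by $\phi(B^i v_1) := A^i e_1$ for $0 \leq i \leq p-2$. For $\phi$ to be a well-defined $C_p$-equivariant isomorphism, I need to verify:
\begin{enumerate}[label=\textup{(\roman*)}]
\item $\{e_1, Ae_1, A^2 e_1, \ldots, A^{p-2} e_1\}$ is a basis of $(\Z/p^k)^{p-1}$;
\item $\sum_{i=0}^{p-1} A^i e_1 = 0$.
\end{enumerate}

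Claim (i) is straightforward: induction on $i$, using $A e_j = e_j + e_{j+1}$ for $j \leq p-2$, yields $A^i e_1 = \sum_{j=0}^{i} \binom{i}{j} e_{j+1}$ for $0 \leq i \leq p-2$, so the change of basis matrix is unipotent upper triangular over $\Z/p^k$ and hence invertible. Claim (ii) is the heart of the argument. Combining the same formula with the hockey stick identity $\sum_{i=l-1}^{p-2} \binom{i}{l-1} = \binom{p-1}{l}$ gives $\sum_{i=0}^{p-2} A^i e_1 = \sum_{l=1}^{p-1} \binom{p-1}{l} e_l$. To compute $A^{p-1} e_1$, one applies $A$ once more to $A^{p-2} e_1$; now the summand with $e_{p-1}$ produces an $s_p$ term, which must be rewritten via relation \eqref{relacionssi} with $i=1$ as $s_p = -\sum_{l=1}^{p-1}\binom{p}{l}\, e_l$. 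Combining this with Pascal's rule $\binom{p}{l} = \binom{p-1}{l-1} + \binom{p-1}{l}$ yields $A^{p-1} e_1 = -\sum_{l=1}^{p-1} \binom{p-1}{l} e_l$, and the two sums cancel, proving (ii).

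The only delicate step is therefore this last computation, where the relation that fixes the order of $S_k$ as $p^{(p-1)k+1}$ enters the picture and lines up with the two standard binomial identities to produce the needed cancellation. Once (i) and (ii) are in hand, $\phi$ is automatically $C_p$-equivariant (its $C_p$-equivariance on the generator $v_1$ is built into the definition, and (ii) makes it consistent with the relation $\Phi_p(B) v_1 = 0$) and bijective, so the two semidirect products $(\Z/p^k)^{p-1} \rtimes_A C_p$ and $(\Z/p^k)^{p-1} \rtimes_B C_p$ are isomorphic as split extensions, as claimed.
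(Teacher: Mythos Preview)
Your proof is correct and, in fact, produces exactly the same change of basis as the paper: the paper simply writes down $v_j = \sum_{m=0}^{j-1}\binom{j-1}{m}\,s_{m+1}$ and declares the verification a ``straight forward computation'', and by your claim (i) this is precisely your $\phi(v_j)=A^{j-1}e_1$. What you add is the conceptual framing via cyclic $\Z/p^k[C_p]$-modules and an explicit verification of the relation $\Phi_p(A)\,e_1=0$ (your claim (ii)), which is the computation the paper omits.
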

\begin{proof}
In this proof we use additive notation. Fix the generators $\{s_i\}_{1\leq i\leq p-1}$ and the new generators $\{v_i\}_{1\leq i\leq p-1}$ defined as $v_j=\sum_{m=0}^{j-1} {\binom{j-1}{m}} s_{m+1}$, for $j = 1, \ldots, p-1$. It is a straight forward computation to check that if $A$ corresponds to an automorphism in generators $\{s_i\}_{1\leq i\leq p-1}$, $B$ corresponds to the same automorphism in generators $\{v_j\}_{1\leq j\leq p-1}$
\end{proof}

\begin{notation}\label{defiTk}
With all these computations we have obtained the following inclusion of split extensions:
\[
\xymatrix{ 1 \ar[r] & (C_{p^k})^{p-1} \ar[r] \ar@{=}[d] & S_{k} \ar[r]^{\rho_k} \ar[d] & C_p \ar[r] \ar[d] & 1 \\
1 \ar[r] & (C_{p^k})^{p-1} \ar[r] & (C_{p^k})^{p-1}\rtimes\Sigma_p \ar[r]  & \Sigma_p \ar[r] & 1
}
\]
with the action of $\Sigma_p$ as in Equation\eqref{actionsigma}, and where each term of the first row is a $p$-Sylow subgroup of the corresponding position on the second row. The kernel of $\rho_k$ plays an important role in this paper, and thus we fix the following notation: $T_k \definicio \Ker(\rho_k) \leq S_k$. We think of $T_k$ as the maximal torus of $S_k$ generated by $\gen{v_1, \ldots, v_{p-1}}$ and $s$ an element in $S_k$ of order $p$ which projects to a generator of $C_p$ and that the action of $s$ on $\gen{v_1, \ldots, v_{p-1}}$ is given by matrix $B$ in Equation~\eqref{action_B}.
\end{notation}


Below we enumerate some properties of the group $S_{k}$. To make it clear we consider the elements of $S_{k}$ written uniquely as $v_1^{i_1} \cdots v_{p-1}^{i_{p-1}}s^i$ with $0 \leq i_j \leq p^k-1$ and $0 \leq i \leq p-1$.

\begin{lemma}\label{conj-p} Consider Notation~\ref{defiTk} and let $v_1^{i_1} \cdots v_{p-1}^{i_{p-1}}$ be an element of $T_{k}\subset S_{k}$. Then, the following holds:
\begin{enumerate}
 \item $s\cdot v_1^{i_1} \cdots v_{p-1}^{i_{p-1}}\cdot s^{-1} =
v_1^{-i_{p-1}}v_2^{i_1-i_{p-1}}v_3^{i_2-i_{p-1}} \cdots v_{p-1}^{i_{p-2}-i_{p-1}}$.
 \item $v_1^{i_1} \cdots v_{p-1}^{i_{p-1}} \cdot s \cdot (v_1^{i_1} \cdots v_{p-1}^{i_{p-1}})^{-1} =
v_1^{i_{p-1} + i_1}v_2^{i_{p-1} + i_2 - i_1} \cdots v_{p-1}^{i_{p-1} + i_{p-1} - i_{p-2}} s$.
 \item $v_1^{i_1} \cdots v_{p-1}^{i_{p-1}}s$ and $v_1^{j_1} \cdots v_{p-1}^{j_{p-1}}s$ are
$S_{k}$-conjugate if and only if $\sum_{l=1}^{p-1} i_l \equiv \sum_{l=1}^{p-1} j_l \pmod{p}$.
\item The center of $S_{k}$ is cyclic of order $p$ and it is generated by $\zeta=(v_1^1v_2^2\cdots v_{p-1}^{p-1})^{p^{k-1}}$.
\item There are $p$ conjugacy classes of subgroups of order $p$ not contained in $T_{k}$, represented by the elements $v_1^is$ for $0 \leq i \leq p-1$.
\end{enumerate}
\end{lemma}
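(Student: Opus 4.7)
The plan is to derive (1)--(5) sequentially, using (1) and (2) as the computational backbone. By Notation~\ref{defiTk}, conjugation by $s$ on $T_k$ is given in additive notation on the basis $\{v_1, \ldots, v_{p-1}\}$ by the matrix $B$ of Equation~\eqref{action_B}, so part (1) is read off row by row from $B \cdot (i_1, \ldots, i_{p-1})^T$. For (2), I would expand $wsw^{-1} = w \cdot c_s(w)^{-1} \cdot s$, invert the formula from (1) elementwise (using that $T_k$ is abelian), and multiply on the left by $w$.

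For (3), the strategy is to identify $\sum_l i_l \pmod p$ as the complete $S_k$-conjugacy invariant for elements of the form $v_1^{i_1}\cdots v_{p-1}^{i_{p-1}} s$. Invariance follows from (1) and (2): conjugating such an element by a torus element $w = v_1^{a_1}\cdots v_{p-1}^{a_{p-1}}$ modifies the exponent vector by a change whose coordinate sum equals $p\, a_{p-1} \equiv 0 \pmod p$, and conjugating by $s$ transforms the exponent vector into $B \cdot (i_1, \ldots, i_{p-1})^T$, whose coordinate sum agrees with the original modulo $p$ because every column sum of $B$ equals $1$ modulo $p$. For the converse I would solve, given a target difference $(c_1, \ldots, c_{p-1})$ with $\sum c_l \equiv 0 \pmod p$, the linear system arising from (2): this reduces to $p\, a_{p-1} = \sum c_l$ in $\Z/p^k$, which is solvable under the hypothesis, and then $a_1, \ldots, a_{p-2}$ are uniquely determined. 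The main subtlety I anticipate is ensuring solvability in $\Z/p^k$ rather than merely in $\F_p$; letting the coordinate $a_{p-1}$ absorb the sum constraint handles this.

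For (4), an element of $T_k$ is fixed by $c_s$ iff $Bv = v$, which by (1) becomes the recursion $i_l = -l \cdot i_{p-1}$ together with the consistency condition $p\, i_{p-1} \equiv 0 \pmod{p^k}$; this forces $i_{p-1} \in p^{k-1}\Z/p^k$ and yields a cyclic group of order $p$ generated by $\zeta$. To rule out central elements outside $T_k$, I would observe that $w s^j \in Z(S_k)$ with $0 < j < p$ would force $B^j v_1 = v_1$; but $B^j$ and $B$ generate the same cyclic subgroup of $\Aut(T_k)$ and hence share the fixed submodule $\gen{\zeta}$, which does not contain $v_1$. Finally for (5), I would first show that every non-torus element has order $p$ via $(w s^j)^p = (I + B^j + \cdots + B^{(p-1)j}) \cdot w = N \cdot w$, where $N = \sum_{i=0}^{p-1} B^i$ annihilates each $v_l = e_l - e_{l+1}$ by a direct computation in the ambient permutation module. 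Each subgroup $H \leq S_k$ of order $p$ outside $T_k$ then contains a unique element whose $C_p$-projection equals $s$ (namely $x^{j^{-1}}$ for any generator $x = u s^j$ of $H$), giving a bijection between such subgroups and elements of the form $u s$; conjugacy of these is classified by (3), producing exactly $p$ classes with representatives $v_1^r s$ for $r = 0, \ldots, p-1$.
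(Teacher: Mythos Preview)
Your argument is correct, and for parts (c)--(e) it takes a noticeably different route from the paper's.

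For (c), the paper first establishes (d) by invoking Blackburn's classification to know that $|Z(S_k)| = p$, then uses a counting argument: the centralizer of any $vs$ in $S_k$ is $\gen{vs,\zeta}$, so each orbit in $X = \{vs : v \in T_k\}$ has size $p^{k(p-1)-1}$, forcing exactly $p$ orbits distinguished by $\sum i_l \pmod p$. You instead solve the linear system coming from (b) directly, showing that torus conjugation alone realizes every target difference with vanishing exponent sum modulo $p$; this is constructive and avoids any appeal to (d). For (d), the paper again relies on Blackburn (maximal nilpotency class $\Rightarrow$ $|Z(S_k)| = p$) and then identifies the generator via the $\Sigma_p$-action on $\{e_1,\ldots,e_p\}$, whereas you solve $Bv = v$ by hand and rule out non-toral central elements via the fixed-module argument for $B^j$. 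For (e), the paper refers back to Proposition~\ref{propSpk}(b), which ultimately quotes a formula from \cite{Blackburn} for $(ss_1^i)^p$; your norm-map computation $(ws^j)^p = N\cdot w$ with $N = \sum_{i=0}^{p-1} B^i = 0$ on $T_k$ is a clean intrinsic replacement. In short, your proof is self-contained and does not lean on \cite{Blackburn}, at the cost of a little more explicit linear algebra; the paper's proof is shorter on the page because it outsources the structural facts about $S_k$.
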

\begin{proof}
The proofs of (a) and (b) are direct computation using the action of $s$ on $v_k^{j_k}$, and statement (e) is the same as Proposition~\ref{propSpk} (b), just changing the generators. Moreover, by \cite{Blackburn}, $S_{k}$ is a $p$-group of maximal nilpotency class, which implies that the center of $S_{k}$ has order $p$. Consider $\zeta$ a generator of the center of $S_{k}$.

We prove (c) by applying (a) and (b). Consider the set $X \definicio \{ vs \}_{v \in T_{k}}$, which has $p^{k(p-1)}$ elements. By (a) and (b) there is an action of $S_{k}$ on $X$ by conjugation. This action keeps the congruence modulo $p$ of the sum of the exponents, so there are at least $p$ $S_{k}$-conjugacy classes of elements in $X$. Given an element $vs \in X$, its centralizer in $S_{k}$, $C_{S_{k}}(vs)$, is the elementary abelian group of order $p^2$ generated by $\langle vs,\zeta \rangle$: if $v'\in T_{k}\cap C_{S_{k}}(vs)$ then $v'$ commutes with all elements in $T_{k}$ and $s$, so $v \in \langle\zeta\rangle$; if $v's^i \in C_{S_{k}}(vs)$ and $v's^i\not\in T_{k}$, we can take a power of it such that $\langle v's^i\rangle=\langle v''s\rangle$, then we get $v''s\in C_{S_{k}}(vs)$, so $s^{-1}v''vs=(v''s)^{-1}(vs)=(vs)(v''s)^{-1}=v''v$, obtaining that $v''v\in \langle \zeta \rangle$. This implies that the orbits of the action of $S_{k}$ on $X$ have $p^{k(p-1)-1}$ elements. So, there are exactly $p$ $S_{k}$-conjugacy classes of elements in $X$ and the congruence modulo $p$ of the sum of the exponents determine if two of them are $S_{k}$-conjugated.
   
Next we prove the second part of (d) based on the fact that the center of $S_{k}$ has order $p$. To compute a generator of the center, consider the action of $\Sigma_p$ in the basis $\{ e_1, \dots, e_p\}$. It is easy to see that the elements of the form $\lambda (e_1 + \cdots +e_p)$ are invariant under the action of $\Sigma_p$ (in particular, by the action of $C_p$). In particular, if $\lambda$ is a multiple of $p^{k-1}$, then the corresponding element belongs to $T_{k}$. Finally, statement (e) follows by passing to the basis $\{v_1, \dots, v_{p-1}\}$ and in multiplicative notation.
\end{proof}


\subsection{Two families of exotic \texorpdfstring{$p$}{p}-local finite groups for \texorpdfstring{$p>3$}{p>3}}
We now describe some generalizations to all primes $p > 3$ of the examples constructed in Subsection~\ref{sec:finite3} for $p = 3$. We start reducing the possible outer automorphism group of $S_k$ of any saturated fusion system $\FF$ over $S_k$.
\begin{proposition}
If $(S_{k},\FF)$ is a saturated fusion system, then $\Out_\FF(S_{k})\leq C_{p-1}\times C_{p-1}$.	
\end{proposition}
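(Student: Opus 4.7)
The strategy is to bound $\Out_\FF(S_k)$ by first controlling $\Out(S_k)$ and then using saturation to restrict to the prime-to-$p$ part. By axiom~(I) applied to $S_k$ itself (which is trivially fully normalized), $\Out_S(S_k) = \Aut_S(S_k)/\Inn(S_k) = 1$ is a Sylow $p$-subgroup of $\Out_\FF(S_k)$; hence $|\Out_\FF(S_k)|$ is prime to $p$. It therefore suffices to show that the prime-to-$p$ part of $\Out(S_k)$ embeds into $C_{p-1} \times C_{p-1}$.

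The key structural input is that $T_k$ is a characteristic subgroup of $S_k$. I would verify this by identifying $T_k = C_{S_k}(\gamma_2(S_k))$: the inclusion $T_k \leq C_{S_k}(\gamma_2)$ is immediate from $T_k$ being abelian, while the converse follows from computing, for $sv \in S_k \setminus T_k$ and $w \in \gamma_2 \leq T_k$, that $[sv,w] = B(w)w^{-1}$, so $sv$ centralizes $\gamma_2$ only if $\gamma_2 \leq \ker(B - I)$; but Lemma~\ref{conj-p}(d) identifies $\ker(B-I) \cap T_k$ with $Z(S_k) = \langle \zeta \rangle$ of order $p$, whereas $|\gamma_2|$ is much larger for $k \geq 2$.

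Granted that $T_k$ is characteristic, any $\alpha \in \Aut(S_k)$ preserves $T_k$ and therefore induces an automorphism $\bar\alpha$ of the Frattini quotient $S_k/\Phi(S_k) \cong \F_p^2$ (with basis $\{\bar s, \bar v_1\}$) that stabilizes the line $\overline{T_k} = \langle \bar v_1 \rangle$. Thus the natural map $\Aut(S_k) \longrightarrow \Aut(S_k/\Phi(S_k)) = \GL_2(\F_p)$ lands in the Borel subgroup $B$ stabilizing $\langle \bar v_1 \rangle$, which has order $p(p-1)^2$ with unipotent radical of order $p$ and torus quotient $C_{p-1} \times C_{p-1}$.

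Finally, by Burnside's basis theorem the kernel of $\Aut(S_k) \to \Aut(S_k/\Phi(S_k))$ is a $p$-group; since $\Inn(S_k)$ is also a $p$-group, the induced map $\Out(S_k) \to B$ has $p$-group kernel. Restricting to the subgroup $\Out_\FF(S_k) \leq \Out(S_k)$, which we have shown has order prime to $p$, this map becomes injective, so $\Out_\FF(S_k)$ embeds into the prime-to-$p$ part of $B$, namely the torus $C_{p-1} \times C_{p-1}$. The main obstacle is the first step, verifying that $T_k$ is characteristic; once this is in hand, the rest is a routine application of saturation together with the Frattini-quotient argument.
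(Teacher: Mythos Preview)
Your proposal is correct and follows essentially the same approach as the paper: both argue that saturation forces $\Out_\FF(S_k)$ to have order prime to $p$, identify $T_k$ as characteristic via $T_k = C_{S_k}([S_k,S_k])$, and then use the Frattini-quotient map (with $p$-group kernel) to embed $\Out_\FF(S_k)$ into the diagonal torus of the Borel subgroup of $\GL_2(\F_p)$. One small remark: when verifying that no element outside $T_k$ centralizes $\gamma_2$, your computation treats elements of the form $sv$, but a general element of $S_k\setminus T_k$ has the form $vs^i$ with $1\le i\le p-1$; the same argument goes through since $\ker(B^i-I)=\ker(B-I)$ for $i\not\equiv 0\pmod p$.
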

\begin{proof}
As $\FF$ is saturated, $\Out_\FF(S_{k})$ must be a $p'$-subgroup of $\Out(S_{k})$.Let $\Phi(S_{k})$ be the Frattini subgroup of $S_k$. We claim that $\Phi(S_{k})=[S_{k},S_{k}]=\langle s_2, s_3, \dots s_{(p-1)k}\rangle$: as $S_{k}$ is a $p$ group, we have the inclusion $[S_{k},S_{k}]\subset\Phi(S_{k})$, and, in this case, $S_{k}/[S_{k},S_{k}]$ is elementary abelian of rank $2$; if we had that $[S_{k},S_{k}]\neq \Phi(S_{k})$, then $S_{k}/\Phi(S_{k})$ would have rank $1$, which would imply that $S_{k}$ is cyclic. The kernel of the map
	\[
	\rho\colon\Out(S_{k}) \to \Out(S_{k}/\Phi(S_{k}))
	\]
	is a $p$-group, so $\rho(\Out_\FF(S_{k}))$ is isomorphic to a subgroup of $\Out(S_{k}/\Phi(S_{k}))$. Use now that $S_{k}/\Phi(S_{k})$ is a rank two elementary abelian group, so we can consider as an $\F_p$ vector space with basis $\{\overline{s},\overline{s_1}\}$ (images of $s$ and $s_1$ in $S_{k}/\Phi(S_{k})$). We have the equality $\gamma=C_{S_{k}}([S_{k},S_{k}])$, so $\gamma$ is a characteristic subgroup of $S_{k}$. This implies that in this basis, $\rho(\Out_\FF(S_{k}))$ is included in lower triangular matrices of $\GL_2(p)$. Now use again that $p\nmid \#\Out_\FF(S_{k})$ to get the result.
\end{proof}

\begin{notation}\label{not:phipsi}
There are some subgroups of $S_k$ which will be of special interest in this subsection, and whose notation we now recall or fix. Let $T_k \leq S_k$ be the maximal torus of $S_k$ and the element $s\in S_k$ as defined in Notation~\ref{defiTk}. Consider now
$$
V \definicio \gen{\zeta, s} \leq S_k,
$$
where $\zeta \in Z(S_k)$ is the generator of the center of $S_k$ specified in Lemma~\ref{conj-p} (d). Thus, $V$ is an elementary abelian $p$-subgroup of rank $2$. As we work with concrete examples, we also specify certain automorphisms, $\phi, \psi \in \Aut(S_k)$ of order $p-1$, for later use:
\begin{itemize}
\item The normalizer of $\langle s \rangle$ in $\{1\}\rtimes \Sigma_p$ is isomorphic to $C_p\rtimes C_{p-1}$. Let $\phi \in \{1\}\rtimes\Sigma_p$ be an element of order $p-1$ normalizing $\langle s  \rangle$. $\phi$ acts over $S_{k}=T_{k}\rtimes \langle s\rangle$ by conjugation (here we consider $S_{k}$ as a subgroup of $T_{k}\rtimes \Sigma_p$). This action sends $s\mapsto s^\lambda$, $\lambda$ a generator of $\F_p^\times$, while $\phi(\zeta)=\zeta$.
\item As $\Aut(C_{p^k})\cong (\Z/p^k\Z)^\times$, we can consider $\mu \in \Aut(C_{p^k})$ an element of order $p-1$, and define $\psi$ as the element in $\Aut(S_{k})$ which restricts to $\mu\times\stackrel{p-1}{\cdots}\times\mu$ in the maximal torus and to the identity on $\langle s \rangle$. To facilitate computations, we can choose $\mu$ such that the composition $\Aut(C_{p^k})\cong(\Z/p^k\Z)^\times \to (\Z/p\Z)^\times$ (where the last map is the reduction modulo $p$) sends $\mu$ to $\lambda$. With this definition, $\psi(\zeta)=\zeta^\lambda$.
\end{itemize}
It can be checked that $\langle \phi, \psi\rangle \cong C_{p-1}\times C_{p-1}$. Moreover, restrictions to $V$ and $T_k$ are given by $\phi|_V=\big(\begin{smallmatrix}\lambda & 0\\ 0 & 1 \end{smallmatrix} \big)$, $\psi|_V=\big(\begin{smallmatrix}1 & 0 \\ 0 & \lambda \end{smallmatrix} \big)$ (we are using additive notation and taking $V=\langle s,\zeta\rangle$), and by $\phi|_{T_{k}}=\sigma$ ($\sigma\in\Sigma_p$ an element of order $(p-1)$ normalizing $s$) and $\psi|_{T_{k}}=\lambda\Id$.
\end{notation}

\begin{proposition}\label{def_of_sfs}
Consider the subgroups $T_{k}$ and $V$ of $S_k$, as given in Notation~\ref{defiTk} and \ref{not:phipsi} respectively. Consider also the action of $\Sigma_p$ on $T_{k}$ described in Equation~\eqref{actionsigma} and let $\phi$ and $\psi$ be the automorphisms of $S_k$ fixed in  Notation~\ref{not:phipsi}.
\begin{enumerate}
\item  For $p \geq 5$, there are exotic saturated fusion systems $(S_{k},\widetilde\FF_{k})$  generated, in the sense of Remark~\ref{generated}, by the following automorphisms groups:
\begin{itemize}
	\item $\Aut_{\widetilde{\FF}_k}(S_k)=\langle \phi,\psi,\Inn(S_k) \rangle$ with an isomorphism $\Out_{\widetilde{\FF}_k}(S_k) \cong C_{p-1}\times C_{p-1}$,
	\item $\Aut_{\widetilde{\FF}_k}(T_{k})=\langle \Sigma_p,\psi\rangle \cong \Sigma_p\times C_{p-1}$ and
	\item $\Aut_{\widetilde{\FF}_k}(V)=\GL_2(\F_p)\cong \SL_2(\F_p)\rtimes C_{p-1}$.
\end{itemize}
\item $(S_k,\widetilde{\FF}_k)$  contains an exotic simple saturated fusion subsystem $(S_k,\FF_k)$ of index $2$, generated by the following automorphisms groups:
\begin{itemize}
	\item $\Aut_{\FF_k}(S_k)=\langle \phi^2,\psi\phi^{-1},\Inn(S_k) \rangle$ with an isomorphism $\Out_{\FF_k}(S_k) \cong C_{\frac{p-1}{2}}\times C_{p-1}$,
	\item $\Aut_{\FF_k}(T_{k})=\langle A_p , \psi\phi^{-1}\rangle\cong A_p\rtimes C_{p-1}$ (where $A_p\leq\Sigma_p$ is the alternating group) and
	\item $\Aut_{\FF_k}(V)=\SL_2(\F_p)\rtimes C_{\frac{p-1}{2}} < \GL_2(\F_p)$.
\end{itemize}
\end{enumerate}
Moreover, $S_k$, $T_k$ and $V$ are representatives of the only conjugacy classes in $\widetilde{\FF}_k$ (respectively $\FF_k$) of centric radical subgroups of $S_k$.
\end{proposition}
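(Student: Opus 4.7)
The plan is to (1) verify internal consistency of the generating automorphism data, (2) prove saturation via the finite case of Proposition~\ref{1.1LO}, (3) identify the $\FF$-centric $\FF$-radical subgroups, and (4) derive exoticness, the index-$2$ claim, and the simplicity of $\FF_k$. For step (1), Notation~\ref{not:phipsi} already supplies the restriction formulas $\phi|_{T_k}=\sigma$ (a $(p-1)$-cycle in $\Sigma_p$), $\psi|_{T_k}=\lambda\cdot\Id$, and explicit diagonal matrices on $V$. These restrictions sit inside $\Sigma_p\times C_{p-1}\cong \Aut_{\widetilde{\FF}_k}(T_k)$ and $\GL_2(\F_p)\cong \Aut_{\widetilde{\FF}_k}(V)$, and agree on the intersection $T_k\cap V=\langle\zeta\rangle$. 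For $\FF_k$, the restrictions of $\phi^2$ and $\psi\phi^{-1}$ land in $A_p\rtimes C_{p-1}$ on $T_k$ (since $\sigma$ is odd for $p\geq 5$, so $\sigma^2\in A_p$) and in $\SL_2(\F_p)\rtimes C_{(p-1)/2}$ on $V$ via a direct determinant calculation.

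For step (2), I would apply the finite analogue of Proposition~\ref{1.1LO} (that is, the original \cite[Proposition 1.1]{LO}). Take $\setp$ to be a set of $\FF$-conjugacy-class representatives of order-$p$ elements of $S_k$. Since $T_k$ is characteristic in $S_k$, these classes partition into those in $\Omega_1(T_k)$ and those outside; Lemma~\ref{conj-p}(e) and the transitivity of $\SL_2(\F_p)\subseteq \Aut_\FF(V)$ on nonzero vectors of $V$ merge the non-$T_k$ classes into the single class of $s$, while inside $T_k$ one reads off representatives of the $\Aut_\FF(T_k)$-orbits on $\Omega_1(T_k)\cong\F_p^{p-1}$. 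Conditions (i)--(ii) are then standard orbit computations. Condition (iii) --- saturation of $C_\FF(y)$ for each $y\in\setp$ --- is straightforward when $C_S(y)$ is abelian: for $y\in\Omega_1(T_k)\setminus\langle\zeta\rangle$ one has $C_S(y)=T_k$, and for $y=s$ one has $C_S(s)=V$, so in both cases the centralizer subsystem is realised by a finite group. The only delicate case is $y=\zeta$, where $C_S(\zeta)=S_k$ is non-abelian, and saturation of $C_\FF(\zeta)$ must be checked directly from those generators of $\FF$ which fix $\zeta$.

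For steps (3) and (4), a case analysis using that $S_k$ has maximal nilpotency class (as in \cite{DRV} for $p=3$ and \cite{BLO2} for $p\geq 5$) shows that $S_k$, $T_k$, and $V$ are, up to $\FF$-conjugacy, the only $\FF$-centric $\FF$-radical subgroups. Exoticness of $\widetilde{\FF}_k$ for $p\geq 5$ is established in \cite{BLO2} by analysing the possible finite groups realising the fusion pattern at $V$; the same style of argument, now using that $\Aut_{\FF_k}(V)\supseteq \SL_2(\F_p)$, yields exoticness of $\FF_k$. The index-$2$ claim follows from Appendix~\ref{appendixA} applied to the quotients $\Aut_{\widetilde{\FF}_k}(Q)/\Aut_{\FF_k}(Q)$, each of order $2$ for $Q\in\{S_k,T_k,V\}$. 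Simplicity of $\FF_k$ is obtained via Lemma~\ref{reducedsimple}: any nontrivial strongly $\FF_k$-closed $P\leq S_k$ contains $Z(S_k)=\langle\zeta\rangle$ by \cite[Theorem~8.1]{AB}, then contains $s$ (since $\zeta$ and $s$ are $\Aut_{\FF_k}(V)$-conjugate), and Lemma~\ref{conj-p} combined with the $A_p$-action on $T_k$ forces $P=S_k$; reducedness is then verified directly from the generators.

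The main obstacle is condition (iii) of the saturation criterion at $y=\zeta$: the other centralisers are abelian and handled by general principles, but $C_S(\zeta)=S_k$ is not abelian, so saturation of $C_\FF(\zeta)$ requires a hands-on verification of the Puig axioms using the description of $\FF$ by its generators. A secondary technical point is the precise enumeration of $\Aut_\FF(T_k)$-orbits on $\Omega_1(T_k)$ to fix an explicit $\setp$.
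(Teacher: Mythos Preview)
Your route diverges from the paper's in a significant way, and the divergence is worth highlighting because it costs you effort the paper simply avoids.

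The paper does \emph{not} re-prove saturation of $\widetilde{\FF}_k$: it cites \cite[Example~9.3]{BLO2}, where both saturation and exoticness are already established. Then, rather than proving saturation of $\FF_k$ independently, the paper \emph{defines} $\FF_k$ as $O^{p'}(\widetilde{\FF}_k)$ via the finite-case classification of index-prime-to-$p$ subsystems in \cite[Section~5.1]{BCGLO2}. This classification guarantees saturation for free and simultaneously establishes the index-$2$ claim; the concrete description of $\Aut_{\FF_k}(Q)$ for $Q\in\{S_k,T_k,V\}$ is then read off by computing $O^{p'}(\Aut_{\widetilde{\FF}_k}(Q))$ and the group $\Aut^0_{\widetilde{\FF}_k}(S_k)$. (Note: Appendix~\ref{appendixA} is the \emph{compact} analogue; for this finite proposition the relevant reference is \cite{BCGLO2}.)

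Your direct approach via \cite[Proposition~1.1]{LO} is workable, but the obstacle you flag at $y=\zeta$ has a resolution you do not state: the centralizer $C_{\FF}(\zeta)$ is precisely the fusion system of the finite group $T_k\rtimes A_p$ (resp.\ $T_k\rtimes\Sigma_p$), hence automatically saturated. This is the finite analogue of Lemma~\ref{centralZ}, and without it your ``hands-on verification of the Puig axioms'' over a non-abelian $C_S(\zeta)=S_k$ is essentially as hard as the original problem. So your strategy is sound but incomplete as written; the paper's strategy sidesteps the issue entirely by leaning on \cite{BLO2} and \cite{BCGLO2}.

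For simplicity via Lemma~\ref{reducedsimple}, your outline matches the paper's. One point you underspecify: showing $O^p(\FF_k)=\FF_k$ requires an explicit focal-subgroup computation $\foc(\FF_k)=S_k$, which the paper carries out by hand using specific elements of $\Aut_{\FF_k}(V)$ and $\Aut_{\FF_k}(T_k)$; ``verified directly from the generators'' is correct in spirit but hides real work.
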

\begin{proof}
To prove (a), the existence of the saturated fusion systems $(S_k,\widetilde{\FF}_k)$ can be found in \cite[Example 9.3]{BLO2}. In the same result the authors also prove that these examples are exotic.

To get (b), we can proceed classifying all the saturated fusion subsystems of $(S_k,\widetilde{\FF}_k)$ of index prime to $p$ as in \cite[Section 5.1]{BCGLO2} or \cite[Part I.7]{AKO}. To do this, we need to compute $\EE_0$, the fusion system generated by $O^{p'}(\Aut_{\widetilde{\FF}_k}(P))$ for all $P\in\widetilde{\FF}_k^{c}$ and use it to compute
$$
\Aut^0_{\widetilde{\FF}_k}(S_k)=\langle \alpha\in\Aut_{\widetilde{\FF}_k}(S_k)\mid \alpha|_p \in \Hom_{\EE_0}(P,S_k), \text{ some } P \in \FF_k^c\rangle .
$$
By \cite[Theorem 3.4]{Ruiz}, it is enough to describe the groups $O^{p'}(\Aut_{\widetilde{\FF}_k}(Q))$, where $Q$ is centric and radical in $\widetilde{\FF}_k$.
\begin{itemize}
	\item For $P=S_k$, $O^{p'}(\Aut_{\widetilde{\FF}_k}(S_k))=\Inn(S_k)$.
	\item For $P=T_{k}$, $O^{p'}(\Aut_{\widetilde{\FF}_k}(T_{k}))\cong A_p$: the elements of order $p$ in $\Sigma_p$ for odd prime $p$ generate the alternating group $A_p$.
	\item For $P=V$, $O^{p'}(\Aut_{\widetilde{\FF}_k}(V))\cong\SL_2(\F_p)$: for odd prime $p$ the elements of order $p$ in $\GL_2(\F_p)$ generate $\SL_2(\F_p)$.
\end{itemize}
Now we have to detect the elements in $\Out_{\widetilde{\FF}_k}(S_k)$ which restrict to morphisms in $\Out_{\EE_0}(S_k)$. We recall that, by definition, there is $\lambda$ a generator of $\F_p^\times$ such that $\phi|_V=\big(\begin{smallmatrix}\lambda & 0\\ 0 & 1 \end{smallmatrix} \big)$, $\psi|_V=\big(\begin{smallmatrix}1 & 0 \\ 0 & \lambda \end{smallmatrix} \big)$, as matrices of $\GL_2(\F_p)$ (see Notation \ref{not:phipsi} for details).
\begin{itemize}
\item $\phi^2$ is an even permutation, so it restricts to an element in 
$O^{p'}(\Aut_{\widetilde{\FF}_k}(T_{k}))$.
\item $\psi^i \phi^{-i}$ restricts to an automorphism of determinant one in $V$.
\item $\langle \phi^2, \psi\phi^{-1}\rangle$ is a subgroup of index $2$ in $\langle \phi,\psi \rangle$ and it remains to prove that $\phi \not\in \Aut_{\EE_0}(S)$. Using \cite[Theorem 3.4]{Ruiz} it is enough to check that $\phi\not\in O^{p'}(\Aut_{\widetilde{\FF}_k}(P))$ for $P=V$ and $P=T_{k}$, which are the only proper $\widetilde{\FF}_k$-centric, $\widetilde{\FF}_k$-radical subgroups (remark that there are not inclusions between them): $\phi$ is an odd permutation, so the restriction to $T_{k}$ does not give an element of $O^{p'}(\Aut_{\widetilde{\FF}_k}(T_{k}))$; $\phi$ does not restrict to an automorphism of determinant $1$ in $V$, so $\phi \not \in O^{p'}(\Aut_{\widetilde{\FF}_k}(V))$. 
\end{itemize}
These computations show us that $\Aut_{\widetilde{\FF}_k}(S)/\Aut_{\widetilde{\FF}_k}^0(S)\cong\Z/2\Z$, so there is just one proper saturated fusion subsystem $\FF'\definicio O^{p'}(\widetilde{\FF}_k)$ of index prime to $p$, and it is of index $2$. By these computations, $\langle \phi^2, \psi\phi^{-1}\rangle \leq \Aut_{\FF'}(S_k)$, $\SL_2(\F_p)\leq\Aut_{\FF'}(V)$ and $A_p\leq \Aut_{\FF'}(T_k)$. 

Consider $\FF_k$ the (not necessarily saturated) fusion system generated, in the sense of Remark~\ref{generated}, by the automorphims in part (b) of the theorem. The restriction $\phi^2|_V$ must also be in $\Aut_{\FF'}(V)$, proving that $\Aut_{\FF_k}(V)\leq \Aut_{\FF'}(V)$, and the restriction $\phi\psi^{-1}|_{T_k}$ must also be in $\Aut_{\FF'}(T_k)$, getting that $\Aut_{\FF_k}(Q) \leq \Aut_{\FF'}(Q)$ for all $Q\in\{V,T_k,S_k\}$. 

It remains to check that there are no elements in $\Aut_{\FF'}(Q)\setminus \Aut_{\FF_k}(Q)$ for $Q\in\{V,T_k,S_k\}$: in these three cases $\Aut_{\widetilde{\FF}_k}(Q)/\Aut_{\FF_k}(Q)\cong \Z/2\Z$, so adding any other morphism $\alpha \in \Aut_{\widetilde{\FF}_k}(Q)\setminus\Aut_{\FF_k}(Q)$ implies $\Aut_{\FF'}(Q_0)=\Aut_{\widetilde{\FF}_k}(Q_0)$ with $Q_0$ either $V$, $T_k$ or $S_k$. If $Q_0=S_k$, $\alpha$ would restrict to elements in $\Aut_{\FF'}(Q)$ not in $\Aut_{\FF_k}(Q)$ for $Q\in\{V,T_k\}$, obtaining $\FF'=\widetilde{\FF}_k$. If $Q_0\in\{V,T_k\}$, then $\Aut_{\FF'}(Q_0)=\Aut_{\widetilde{\FF}_k}(Q_0)$ and, using that $\FF'$ is saturated, there is an automorphims which would extend to $\psi\in\Aut_{\FF'}(S_k)$, getting that also in this case $\FF'=\widetilde{\FF}_k$. As $\FF'\lneq\widetilde{\FF}_k$, we get that $\Aut_{\FF'}(Q)=\Aut_{\FF_k}(Q)$ for $Q\in\{V,T_k,S_k\}$ and $\FF'=\FF_k$.

Let us see now that $\FF_k$ is simple. By Lemma~\ref{reducedsimple} we have to check that:
\begin{enumerate}[label=\textup{(\roman*)}]
\item $O_p(\FF_{k})=1$: there is not any proper nontrivial strongly closed subgroup in $\FF_{k}$. Indeed, let $P \trianglelefteq S_{k}$ be a 
nontrivial strongly closed subgroup. By \cite[Theorem 8.1]{AB}, $P$ must intersect the center in a nontrivial subgroup. Since the center of $S_{k}$ 
has order $p$, we must have $Z(S_{k}) \leq P$. Moreover, since $\zeta$ is $\FF_{k}$-conjugated to $s$ by a morphism in $\Aut_{\FF_{k}}(V)$, we 
must have also $s \in P$, since $P$ is strongly closed. Then, by \cite[Lemma 2.2]{Blackburn}, $P$ is of index at most $p$ in $S_{k}$.

In fact, by Lemma~\ref{conj-p}~$(c)$, $P$ must contain the subgroup generated by $s$ and all elements $v_1^{i_i}\cdots v_{p-1}^{i_{p-1}} \in 
T_{k}$ whose sum of exponents is congruent to 0 modulo $p$, which is an index $p$ subgroup of $S_{k}$. Then, let $\varphi$ be the automorphism of $T_{k}$ induced by the cycle $(1 2 3) \in A_p \cong \Aut_{\FF_{k}}(T_{k})$. We have that
\begin{align*}
\varphi(v_1) &= v_2\\
\varphi(v_2) &= v_1^{-1}v_2^{-1}\\
\varphi(v_3) &= v_1v_2v_3\\
\varphi(v_i) &= v_i, \text{ for } 4 \leq i \leq p-1
\end{align*}
Taking for example the element $v_2^{-1}v_4$, which is in $P$ since the sum of exponents is 0, we have that $\varphi(v_2^{-1}v_4) = v_1v_2v_4$. 
Thus, $v_1v_2v_4$ must lie also in $P$, but since the sum of the exponents is not 0 modulo $p$ for $p \geq 5$, and using that the index of $P$ is at 
most $p$, we get that $P = S_{k}$. So there is not any proper nontrivial normal subgroup in $\FF_{k}$.
\item $O^p(\FF_k)=\FF_k$: by \cite[Proposition 1.3 (d)]{Oliver_Ind_p}, we need to show that $\foc(\FF_k)=S_k$, where $\foc(\FF_k)$ is the \emph{focal subgroup of $\FF_k$}, defined by
\[
\foc(\FF_k) = \gen{g \cdot \varphi(g)^{-1} \, |\, g \in S_k, \, \varphi \in \Hom_{\FF_k}(\gen{g}, S_k)}
\]
(remark that this definition applies only to finite saturated fusion systems).
First, note that there are elements $\varphi,\varphi'\in\Aut_{\FF_k}(V)$ such that $\varphi(s)=s \zeta$ and also $\varphi'(\zeta)=s\zeta$, and hence we get $V_0\subset\foc(\FF_k)$. The action of $C_{p-1}$ on the maximal torus $T_k$ includes $\varphi$ such that $\varphi(v)=v^{-1}$ for all $v \in T_k$, so we have all elements $\langle v_1^2, \dots , v_{p-1}^2\rangle\subset\foc(\FF_k)$. Taking now the expression of $\zeta$, we get that $v_1 v_3 \cdots v_{p-2} \in \foc(\FF_k)$. Conjugating this element by $s$ we get $v_2 v_4 \cdots v_{p-1} \in \foc(\FF_k)$. Consider now $\varphi$ the conjugation by an element of order $p$ in $A_p$ on $v_{p-1}$. This tells us that $v_{p-1}\varphi(v_{p-1}^{-1})=v_1 v_2 \dots v_{p-2}v_{p-1}^2\in\foc(\FF_k)$. So $v_1 v_2 \dots v_{p-2}\in\foc(\FF_k)$ and
we get that $v_{p-1}\in\foc(\FF_k)$. Conjugating $v_1 v_2 \dots v_{p-2}$ by $s$ and using that $v_{p-1}\in\foc(\FF_k)$ we get $v_1\in\foc(\FF_k)$, getting that $S_k=\langle s,v_1 \rangle \subset\foc(\FF_k)$.
\item $O^{p'}(\FF_k)=\FF_k$: as $\widetilde{\FF}_k^c=\FF_k^c$, the computations in the first part of this proof show us that $O^{p'}(\FF_k)=O^{p'}(\widetilde\FF_k)=\FF_k$.
\end{enumerate}
Finally, $(S_k,\FF_k)$ and $(S_k,\widetilde{\FF}_k)$ are exotic because these examples do not appear in the list of \cite[Proposition 9.5]{BLO2}, which contains all the finite groups realizing simple saturated fusion systems over $p$-groups of this type.
\end{proof}

\section{New \texorpdfstring{$p$}{p}-local compact groups}\label{sec:exotic}

In this section we will introduce the $p$-local compact version of the finite examples described in Section~\ref{sec:finite} and we will discuss about their exoticness. When trying to generalize the definition of \emph{exotic} from $p$-local finite groups to to $p$-local compact groups, it seems natural to just remove the finiteness condition but, as we can see in \cite{GL}, there always exists some (non-compact) infinite group which realizes a given saturated fusion system over a discrete $p$-toral group. So, in order to keep the condition of being compact, we restrict our attention to compact Lie groups and $p$-compact groups.
%
\begin{remark}
There are several ways of producing examples of \emph{non-simple} $p$-local compact groups which are not $p$-compact groups. For example, one could consider an extension of a torus by a non $p$-nilpotent finite group. By Lemma~\ref{lemma_ishiguro}, this produces a $p$-local compact group which does not correspond to any $p$-compact group.
\end{remark}

\subsection{Families of \texorpdfstring{$p$}{p}-local compact groups}\label{sec:plcg}

In this subsection we describe the properties which define the $p$-local compact groups we are interested in. 
\begin{notation}\label{notation_ptoral}
Consider the extension $T_{k} \rightarrow S_{k} \rightarrow C_p$ and the particular elements $s$ and $\{v_i\}_{1\leq i \leq p-1}$ described in Notation~\ref{defiTk}. We can construct a discrete $p$-toral group by taking the monomorphims $I_{k}\colon S_{k} \rightarrow S_{k+1}$, defined by $I_{k}(s) = s$ and $I_{k}(v_i) = v_i^p$, which are compatible with an obvious choice of sections of the extensions. Thus, the discrete $p$-toral group
\[S = \bigcup_{k \geq 2} S_{k}\]
fits in a split extension $T \rightarrow S \rightarrow C_p$, where $T = \bigcup_{k \geq 2} T_k \cong (\Z/p^{\infty})^{p-1}$. Observe that, for each $k \geq 2$, the generator $\zeta$ of $Z(S_k)$ described in Lemma~\ref{conj-p} (d) is mapped by $I_k$ to the corresponding generator of $Z(S_{k+1})$. It is thus reasonable to adopt the same notation for the resulting element in $S$, and in fact we have $Z(S) = \gen{\zeta}\cong \Z/p$. We also consider $V = \gen{s, \zeta} \cong \Z/p \times \Z/p$ as a subgroup of $S$, via the obvious inclusion. Finally, notice that the description of the different subgroups of $\Out(P)$ for $P\in\{S_{k},T_{k},V\}$ described in Section~\ref{sec:finite} can be generalized without modification to describe certain subgroups of $\Out(Q)$, for $Q \in \{S, T, V\}$. For later use, we also set $Z \definicio Z(S)$.
\end{notation}
\begin{theorem}\label{FFpissaturated}
Let $p$ be an odd prime number and $S$ as in Notation~\ref{notation_ptoral}.
Consider $(S,\FF)$ for $p\geq 3$,  and $(S,\widetilde{\FF})$ for $p\geq 5$, the fusion systems generated, in the sense of Remark~\ref{generated}, by the automorphisms in the following table:
	{\renewcommand{\arraystretch}{2}
	\[
	\begin{array}{|c||c|c|c|c|}
	\hline \FF & \Aut_\FF(S) & \Aut_\FF(T) & \Aut_\FF(V) & \text{prime} \\
	\hline \hline \FF & \langle \phi, \psi, \Inn(S) \rangle  & \GL_2(\F_3) & \GL_2(\F_3) & p=3 \\
	\hline \FF &  \langle \phi^2, \psi\phi^{-1}, \Inn(S) \rangle & A_p \rtimes C_{p-1} & \SL_2(\F_p) \rtimes C_{(p-1)/2} & \multirow{2}{30pt}{$p\geq5$} \\
	\cline{1-4} \widetilde{\FF} &  \langle \phi, \psi, \Inn(S) \rangle & \Sigma_p \times C_{p-1} & \GL_2(\F_p) & \\ \hline
	\end{array}
	\]
	}
where $\phi$ and $\psi$ are defined in Notation~\ref{not:phipsi} and the action of the symmetric group on maximal torus is the one described in Equation~\eqref{actionsigma}. Then, $\FF$ (for $p\geq3$) and $\widetilde{\FF}$ (for $p\geq5$) are saturated fusion systems over $S$ and there exist $p$-local compact groups $(S,\FF,\LL)$ (for $p\geq3$) and $(S,\widetilde\FF,\widetilde\LL)$ (for $p\geq5$). Moreover, $S$, $T$ and $V$ are representatives of the only conjugacy classes in $\widetilde{\FF}$ (respectively $\FF$) of centric radical subgroups of $S$.
\end{theorem}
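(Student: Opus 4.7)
The plan is to apply the saturation criterion Proposition~\ref{1.1LO} to both $\FF$ and $\widetilde\FF$ (the arguments are parallel) and then invoke obstruction theory to build the centric linking system. Before applying the criterion, I would classify the $\FF$-centric radical subgroups of $S$ by mirroring the finite analysis of Proposition~\ref{def_of_sfs} and its $p = 3$ analogue: a subgroup $Q \leq S$ not $\FF$-conjugate to one of $S$, $T$, or $V$ fails either $\FF$-centricity ($C_S(Q) \not\leq Q$) or $\FF$-radicality ($O_p(\Out_\FF(Q)) \neq 1$). The key input is the structure of $S$ as an extension $T \rtimes C_p$ together with the fact that each finite truncation $\FF_k$ (or $\widetilde\FF_k$) has centric radicals $\{S_k, T_k, V\}$; any centric radical of $\FF$ must be such for $\FF_k$ at every large $k$, forcing the list $\{S, T, V\}$.

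Axiom (III) of Proposition~\ref{1.1LO} follows from the local finiteness $S = \bigcup_k S_k$ together with the fact that $\FF$ is generated by finitely many automorphism groups: any restriction $\varphi|_{P_n}$ is a composition of restrictions of generators from $\{\Aut_\FF(S), \Aut_\FF(T), \Aut_\FF(V)\}$, and this representation passes to $P_\infty$. For the distinguished set of order-$p$ elements I would take $\setp = \{\zeta\} \cup \{t_1, \ldots, t_m\}$, where the $t_i$ represent the $\Aut_\FF(T)$-orbits on $\Omega_1(T)$ not meeting $\gen\zeta$ (for $p = 3$ the single $\GL_2(\F_3)$-orbit already contains $\zeta$, so $\setp$ reduces to $\{\zeta\}$; for $p \geq 5$ the $t_i$ lie in $\Omega_1(T) \setminus Z$). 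Condition (i) holds because $\SL_2(\F_p) \leq \Aut_\FF(V)$ is transitive on $V \setminus \{1\}$, fusing $\zeta$ with all order-$p$ elements of $V$; combined with $\Inn(S)$-conjugation (using that $Z + \mathrm{im}(I-B) = T$) every element of $S \setminus T$ is also $\FF$-conjugate to $\zeta$; the remaining elements lie in $\Omega_1(T) \setminus Z$ and are represented by the $t_i$. For $p \geq 5$ these two families of classes are disjoint because $\Aut_\FF(T)$ preserves $Z$ setwise (since $\Sigma_p$ fixes $\zeta$ and $C_{p-1}$ acts on $Z$ as scalars). Condition (ii) is then straightforward, as the chosen representatives have maximal centralizer in their $\FF$-class.

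The main obstacle is condition (iii). For each $t_i$ (only relevant for $p \geq 5$), $C_\FF(t_i)$ is a fusion system over the abelian group $T$ whose automorphism group at $T$ is the stabilizer of $t_i$ in $\Aut_\FF(T)$; as the $p$-elements of $\Aut_\FF(T)$ are $p$-cycles with fixed subspace $Z$, for $t_i \notin Z$ this stabilizer is a $p'$-group and saturation is immediate. The delicate case is $C_\FF(\zeta)$, which sits over the full $S$. The key subtlety is axiom (I) at $V$: the stabilizer of $\zeta$ in $\SL_2(\F_p) \leq \Aut_\FF(V)$ is the lower unipotent subgroup of order $p$, but this coincides with $\Aut_S(V) = N_S(V)/V$, so $\Aut_S(V)$ is already a Sylow $p$-subgroup of $\Aut_{C_\FF(\zeta)}(V)$. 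Analogous verifications handle axiom (I) at $T$ and $S$; axioms (II) and (III) for $C_\FF(\zeta)$ are inherited from $\FF$. Perhaps the cleanest path is to identify $C_\FF(\zeta) = \bigcup_k C_{\FF_k}(\zeta)$ as a directed colimit of finite saturated fusion systems (each $C_{\FF_k}(\zeta)$ is saturated because $\gen\zeta$ is fully $\FF_k$-centralized), reducing the infinite saturation to the finite case established in Section~\ref{sec:finite}.

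Once saturation of $\FF$ and $\widetilde\FF$ is established and the centric radicals are identified as $S$, $T$, $V$, the existence and essential uniqueness of the centric linking systems $\LL$ and $\widetilde\LL$ follow from obstruction theory for fusion systems over discrete $p$-toral groups: the relevant obstructions lie in $\varprojlim^j$ ($j = 2, 3$) of the center functor on the orbit category of centric radicals. With only three isomorphism classes and the explicit centers $Z(S) \cong \Z/p$, $Z(T) \cong (\Z/p^\infty)^{p-1}$, $Z(V) \cong (\Z/p)^2$, the higher limits can be computed directly and shown to vanish.
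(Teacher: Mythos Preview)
Your overall strategy---apply Proposition~\ref{1.1LO} and then build the linking system---matches the paper, and your verifications of conditions (i) and (ii) are essentially the same (the paper takes $\setp$ to be \emph{all} order-$p$ elements of $T$ rather than representatives, but either choice works). The genuine divergence is in condition (iii), and this is where the paper's argument is substantially cleaner than either of your proposed routes.

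The paper proves (Lemma~\ref{centralZ}) that for $v \in Z$ the centralizer $C_{\EE}(v)$ is \emph{equal} to the fusion system $\FF_S(T \rtimes G)$ of a compact Lie group, with $G \in \{\Sigma_3, A_p, \Sigma_p\}$ according to the case; saturation is then immediate from \cite[Lemma~9.5]{BLO3}. The key step is a factorization lemma: every morphism in $C_{\EE}(v)$ factors as $\alpha_3|_{X_3} \circ \alpha_2|_{X_2} \circ \alpha_1|_P$ with $\alpha_1, \alpha_3 \in \Aut_{\EE}(S)$, $\alpha_2 \in \Aut_{\EE}(Q)$ for $Q \in \{S,T,V\}$, and each $\alpha_i$ fixing $Z$ pointwise. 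This reduces the description of $C_{\EE}(v)$ to computing $\{\gamma \in \Aut_{\EE}(Q) : \gamma|_Z = \Id_Z\}$ for $Q \in \{S,T,V\}$, which is a straightforward inspection. Your direct-verification approach would work but is more laborious, and your claim that ``axioms (II) and (III) for $C_\FF(\zeta)$ are inherited from $\FF$'' is not quite right: axiom (II) in the centralizer requires the extension to fix $\zeta$, which is an extra constraint not automatic from (II) in $\FF$. Your colimit approach $C_\FF(\zeta) = \bigcup_k C_{\FF_k}(\zeta)$ is appealing but needs justification, since saturation does not in general pass to directed colimits of fusion systems; one would have to invoke or reprove an approximation result.

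Finally, for the existence of $\LL$ and $\widetilde\LL$ you do not need to compute any higher limits: the paper simply cites Levi--Libman \cite{LeviLibman}, which gives existence and uniqueness of centric linking systems for \emph{every} saturated fusion system over a discrete $p$-toral group.
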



The proof of Theorem~\ref{FFpissaturated} essentially relies upon Proposition~\ref{1.1LO}. First, we need some technical lemmas.



\begin{lemma}\label{conjtotor}
Let $(S, \EE)$ be any of the fusion systems described in Theorem~\ref{FFpissaturated}, and let $x \in S$ be an element of order $p$. Then, either $x \in T$ or $x$ is $\EE$-conjugate to an element of $Z$.
\end{lemma}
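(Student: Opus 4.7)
The plan is to show that every element $x \in S \setminus T$ of order $p$ is $S$-conjugate to a nonzero power of $s$, and then to apply the transitive action of $\Aut_\EE(V) \supseteq \SL_2(\F_p)$ on $V \setminus \{1\}$ to move that power of $s$ to a generator of $Z$.

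First, since $S = \bigcup_{k \geq 2} S_{k}$, I would choose $k$ with $x \in S_{k}$ and invoke Lemma~\ref{conj-p}(e) in $S_{k}$ to write $x = g(v_1^i s)^m g^{-1}$ for some $g \in S_{k}$, $i \in \{0,\dots,p-1\}$, and $m \in \{1,\dots,p-1\}$.

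The key step is the passage from $S_{k}$ to $S_{k+1}$ via the inclusion $I_k$ of Notation~\ref{notation_ptoral}, which sends $v_1 \mapsto v_1^p$ and fixes $s$. As a result, $v_1^i s \in S_{k}$ is identified with $v_1^{ip}s$ in $S_{k+1}$, whose $T$-part $v_1^{ip}$ has exponent sum $ip \equiv 0 \pmod p$. Lemma~\ref{conj-p}(c), applied in $S_{k+1}$, then produces an element $h \in S_{k+1}$ with $h(v_1^{ip}s)h^{-1} = s$; raising both sides to the $m$-th power gives an $S$-conjugation of $x$ to $s^m$ inside $S$.

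Finally, $s^m$ is a nonzero vector of $V = \langle s,\zeta\rangle \cong \F_p^2$, and $\SL_2(\F_p) \leq \Aut_\EE(V)$ in every row of the table of Theorem~\ref{FFpissaturated}. Since $\SL_2(\F_p)$ acts transitively on the nonzero vectors of $\F_p^2$, there is $\alpha \in \Aut_\EE(V)$ with $\alpha(s^m) = \zeta$. Composing the inner automorphism from the previous paragraph with $\alpha$ yields the required morphism in $\EE$ sending $x$ to $\zeta \in Z$.

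I expect the main obstacle to be spotting the right reformulation. Within any fixed $S_{k}$ the $p$ classes of Lemma~\ref{conj-p}(e) are genuinely distinct, and the automorphisms $\phi$, $\psi$ and $\Inn(S)$ all preserve the $s$-exponent modulo $p$, so one cannot bring an element like $v_1 s$ into $T$ by an automorphism of $S$ alone; what rescues the situation is that the inclusion $I_k$ multiplies the exponent of $v_1$ by $p$, collapsing all $p$ classes into the single class of $\langle s\rangle$ in the direct limit $S$. Once this collapse is identified, the rest is a routine application of Lemma~\ref{conj-p}.
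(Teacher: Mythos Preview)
Your argument is correct and follows essentially the same route as the paper's proof: both exploit the $p$-divisibility of the torus to force the exponent sum of the $T$-part to be $\equiv 0 \pmod p$, invoke Lemma~\ref{conj-p}(c) to $S$-conjugate into $\langle s\rangle$, and then use $\SL_2(\F_p)\leq\Aut_\EE(V)$ to reach $Z$. The only cosmetic difference is that the paper phrases the $p$-divisibility step directly (``since $T=(\Z/p^\infty)^{p-1}$ we can write $w=w_1^{pi_1}\cdots w_{p-1}^{pi_{p-1}}$'') rather than via the inclusion $I_k$, and it first reduces $vs^i$ to some $ws$ before conjugating, whereas you carry the power $m$ along and apply it at the end.
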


\begin{proof}
Let $v\in T$ and $vs^i$, with $i \neq 0$, an element not contained in $T$. An easy computation shows that
$\langle vs^i\rangle=\langle ws\rangle$ for some $w \in T$. Since $T = (\Z/p^{\infty})^{p-1}$, we can write $w
= w_1^{pi_1} \cdots w_{p-1}^{pi_{p-1}}$, for some $w_1, \dots, w_{p-1}$ in $\Z/p^k\subset\Z/p^\infty$, for $k$ big enough. Then, by Lemma~\ref{conj-p} (c), $w_1^{pi_1} \cdots w_{p-1}^{pi_{p-1}}s$ and $s$ are conjugate in $S_{k}$, and thus so are they in $S$. Hence $vs^i$ is $S$-conjugate to $s^j$ for some $j \neq  0$. Finally, as $\FF \subseteq \widetilde{\FF}$ for $p \geq 5$, and as
\[
\Out_{\FF}(V) = \left\{
\begin{array}{ll}
\GL_2(\F_3), & \mbox{if $p = 3$;}\\
\SL_2(\F_p) \rtimes C_{\frac{p-1}{2}}, & \mbox{if $p \geq 5$;}
\end{array}
\right.
\]
we deduce that $s$ is conjugate to $s^j$ for all $j \neq 0$ in all cases. This also proves that $s$ is conjugate to the element $\zeta \in Z$, and this finishes the proof.
\end{proof}

The following result states some properties of centralizers of toral elements of order $p$. In the particular case of central elements, we describe the centralizer fusion system in full detail, as we will need these computations later in this section.

\begin{lemma}\label{centralZ}
Let $(S, \EE)$ be any of the fusion systems described in Theorem~\ref{FFpissaturated}, let $v \in T$ be a nontrivial element of order $p$, and let $C_{\EE}(v)$ be the centralizer fusion system of $\gen{v}$ over $C_S(v)$. Then, $C_{\EE}(v)$ is a saturated fusion system over $C_S(v)$. Moreover, if $v \in Z$, then $C_{\EE}(v)$ is given by the following table.
	{\renewcommand{\arraystretch}{2}
	\[
	\begin{array}{|c|c|}
	\hline p = 3 & C_{\FF}(v) = \FF_{S}(T \rtimes \Sigma_3)\\
    \hline	\multirow{2}{30pt}{$p \geq 5$} & C_{\FF}(v) = \FF_{S}(T \rtimes A_p)\\
	\cline{2-2} & C_{\widetilde{\FF}}(v) = \FF_{S}(T \rtimes \Sigma_p) \\
 \hline
	\end{array}
	\]
	}
\end{lemma}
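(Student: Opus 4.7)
The strategy is to identify $C_\EE(\zeta)$, for $\zeta$ a generator of $Z$, with the saturated fusion system of an explicit virtual compact Lie group. Saturation of $C_\EE(v)$ for $v\in Z$ then follows from \cite[Lemma 9.5]{BLO3}, while the case $v\in T\setminus Z$ will be handled separately (and more easily) since then $C_S(v)=T$ is abelian.

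For $\zeta\in Z$ we have $C_S(\zeta)=S$. The plan is to set $G=T\rtimes H$ with $H$ being $\Sigma_3$, $A_p$, or $\Sigma_p$ as in the table. Since in each case $|H|$ has $p$-part exactly $p$, the subgroup $S=T\rtimes\langle s\rangle$ is a Sylow $p$-subgroup of $G$, so $\FF_S(G)$ is automatically saturated. The key observation is that, in the description of Notation~\ref{notation_ptoral}, $\zeta$ corresponds to the ``all-ones'' element of the sum-zero submodule $T\subset(\Z/p^\infty)^p$; hence $\zeta$ is fixed pointwise by the $\Sigma_p$-action on $T$, while the scalar generator $\psi$ acts on $\zeta$ by the nontrivial $\lambda\in\F_p^\times$. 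This immediately implies that the stabilizer of $\zeta$ in $\Aut_\EE(T)$ is precisely $H$, matching $\Aut_{\FF_S(G)}(T)=N_G(T)/C_G(T)$. Analogous computations on the two remaining $\EE$-centric radical classes (which by Proposition~\ref{def_of_sfs} are $S$ and $V$) would show that modulo $\Inn(S)$ the stabilizer of $\zeta$ in $\Out_\EE(S)$ equals $N_H(\langle s\rangle)/\langle s\rangle$, which is exactly $\Out_{\FF_S(G)}(S)$, and that the stabilizer of $\zeta$ in $\Aut_\EE(V)$ is the affine subgroup of $\Aut(V)$ fixing $\zeta$, which matches $\Aut_{\FF_S(G)}(V)$. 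By Alperin's fusion theorem, since both fusion systems agree on all $\EE$-centric radical subgroups, we conclude $C_\EE(\zeta)=\FF_S(G)$.

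For $v\in T\setminus Z$ of order $p$: from the description of the $s$-action on $T$ via the matrix $B$, the $s$-fixed subgroup of $T$ equals $Z$, and hence $C_S(v)=T$. Moreover, every element of order $p$ in $H$ is a $p$-cycle in $\Sigma_p$, and every such $p$-cycle has fixed subgroup exactly $Z$ on $T$; thus the stabilizer $H_v\leq\Aut_\EE(T)$ of $v$ is a $p'$-group. It follows that $T\rtimes H_v$ is a virtual compact Lie group with $T$ as a Sylow $p$-subgroup, and the plan is to identify $C_\EE(v)$ with the saturated fusion system $\FF_T(T\rtimes H_v)$, using that every morphism of $C_\EE(v)$ between subgroups of $T$ is a restriction of an element of $H_v$.

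The main obstacle is the explicit computation of $\Aut_{\FF_S(G)}(V)=N_G(V)/C_G(V)$: since $V$ is not normal in $G$, one needs to determine precisely which elements of $T\cdot N_H(\langle s\rangle)$ normalize $V$. This boils down to identifying the preimage under $1-B$ of $\langle\zeta\rangle$ in $T$, and verifying that the resulting automorphisms of $V$ give exactly the stabilizer of $\zeta$ inside $\Aut_\EE(V)$.
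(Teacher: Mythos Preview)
Your approach has a genuine gap, and it is precisely the point where the paper does the real work.

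You compute the stabilizers of $\zeta$ in $\Aut_\EE(Q)$ for $Q\in\{S,T,V\}$ and match them with $\Aut_{\FF_S(G)}(Q)$. This gives (at best) the inclusion $\FF_S(G)\subseteq C_\EE(\zeta)$, via Alperin applied to the saturated system $\FF_S(G)$. The reverse inclusion is the nontrivial part. A morphism $\alpha$ lies in $C_\EE(\zeta)$ when it admits \emph{some} extension in $\EE$ fixing $\zeta$. Since $\EE$ is only \emph{defined} as the system generated by $\Aut_\EE(S)$, $\Aut_\EE(T)$, $\Aut_\EE(V)$, such an $\alpha$ is a priori a composition $\beta_n|\circ\cdots\circ\beta_1|$ of restrictions of generators whose \emph{total} effect on $\zeta$ is trivial, while the individual $\beta_i$ need not fix $\zeta$ at all (and the intermediate subgroups need not even contain $\zeta$). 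To conclude $\alpha\in\FF_S(G)$ you must refactor $\alpha$ so that each piece fixes $\zeta$. Your invocation of Alperin does not produce this: you cannot apply Alperin to $C_\EE(\zeta)$ or to $\EE$, because at this point in the paper neither is known to be saturated; indeed, this lemma is an input to the saturation proof of $\EE$ via Proposition~\ref{1.1LO}, so that would be circular.

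The paper's proof addresses exactly this. It first shows, by a minimality argument on the length of a factorization, that every morphism in $\EE$ can be written as $\alpha_3|\circ\alpha_2|\circ\alpha_1|$ with $\alpha_1,\alpha_3\in\Aut_\EE(S)$ and $\alpha_2\in\Aut_\EE(Q)$ for a single $Q\in\{S,T,V\}$. It then refines this three-term factorization, using the decomposition $\Aut_\EE(S)=\langle G,H,\Aut_T(S)\rangle$, so that each factor individually fixes $W=\langle v\rangle$. Only after this refactoring is established does the identification of $C_\EE(v)$ with $\FF_S(G)$ (or with a system over $T$ when $v\notin Z$) follow from the stabilizer computations you describe. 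Your stabilizer computations and the identification of $G$ are correct in spirit, but they are the last step, not the whole argument.
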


\begin{proof}
Fix $v \in T$, a nontrivial element of order $p$, and let $W = \gen{v}$. By definition, $C_{\EE}(v)$ is the fusion system over $C_S(v)$ whose morphisms are those morphisms $\alpha \colon P \to P'$ in $\EE$ that extend to some $\widetilde{\alpha} \colon PW \to P'W$ such that $\widetilde{\alpha}|_W = \Id_W$.

Suppose first the following: for every morphism $\alpha \colon P \to P'$ in $C_{\EE}(W)$, with $W \leq P$, there is a factorization
$$
\alpha = \alpha_3|_{X_3} \circ \alpha_2|_{X_2} \circ \alpha_1|_P,
$$
with $X_2 = \alpha_1(P)$ and $X_3 = \alpha_2(X_2)$, satisfying the following conditions:
\begin{enumerate}[label=\textup{(\arabic*)}]
\item $\alpha_1, \alpha_3 \in \Aut_{\EE}(S)$ and $\alpha_2 \in \Aut_{\EE}(Q)$, with $Q \in \{S, T, V\}$; and
\item $\alpha_i|_W = \Id_W$ for $i = 1, 2, 3$ (in particular, $W \leq Q$ in condition (1)).
\end{enumerate}
Under this assumption, we claim that the statement follows: assume that $v \notin Z$, so that $C_S(v) = T$ and $W \not \leq V$. This means that every morphism in $C_{\EE}(v)$ is the composition of restrictions of automorphisms of $S$ or $T$ which restricts to the identity in $\langle v\rangle$. As $C_S(v) = T$, we get that $C_{\EE}(v)=\FF_{T}(G)$, with $G\cong T\rtimes W$ ($W$ a finite group). Such a $G$ satisfies the conditions in \cite[Theorem 8.7]{BLO3}, getting that $C_{\EE}(v)$ is saturated.
Suppose, otherwise, that $v \in Z$, so that $W = Z$ and $C_S(v) = S$. By properties (1) and (2) above, in order to describe $C_{\EE}(v)$, it is enough to analyze the groups $\Gamma_Q\definicio\{\gamma \in \Aut_{\EE}(Q) \mid \gamma|_Z = \Id_Z\}$ for $Q \in \{S, T, V\}$. A case by case inspection shows that $\Gamma_Q=\Aut_Q(G)$, where $Q \in \{S, T, V\}$ and $G$ is the group specified in the statement. So $C_{\EE}(v)$ corresponds to the fusion system $\FF_{S}(G)$, with $G$, again, as in \cite[Theorem 8.7]{BLO3}, which implies that $C_{\EE}(v)$ is saturated.

It remains to prove that each morphism in $C_{\EE}(v)$ admits a factorization satisfying properties (1) and (2) above. Thus, fix some morphism $\alpha \colon P \to P'$ in $C_{\EE}(v)$. Without loss of generality we may assume that $W \leq P, P'$. As $\EE$ is generated by the automorphisms of $S$, $T$ and $V$ specified in Theorem~\ref{FFpissaturated}, there exist $\beta_i \in \Aut_\EE(Q_i)$, with $Q_i \in \{V, T, S\}$ such that
\begin{equation}\label{alphabeta}
\alpha=\beta_n|_{P_n} \circ \cdots \circ \beta_1|_{P_1}
\end{equation}
where $P_1=P$ and $P_i=\beta_{i-1}(P_{i-1})$. Consider now $n_0$ minimal such that a factorization of $\alpha$ exists. Such a minimal factorization fulfills the following conditions.
\begin{enumerate}
\item There is not $P_i=P_{i+1}$, as we can always compose $\alpha_i$ with $\alpha_{i+1}$ to obtain a shorter factorization.
\item There is only one index $i_0$ such that $P_{i_0}=V$: if there are two indices $i_0<i_1$ with $P_{i_0} = P_{i_1} = V$, then the composition $\beta_{i_1}|_{P_{i_1}} \circ\cdots\circ \beta_{i_0}|_{P_{i_0}}$ is the restriction of an element in the group $\Aut_{\EE}(V) \geq \SL_2(\F_p)$.
\item If $P_i=T$, then $P_{i-1}\neq V$ and $P_{i+1}\neq V$: as $T\cap V=Z$, this composition can only restrict to an automorphism of $Z$, and these are all realized by restrictions of elements of $\Aut_\EE(S_p)$.
\item There is only one index $i_0$ such that $P_{i_0}=T$: by the previous cases, if there are two indices $i_0<i_1$ such that $P_{i_0}=P_{i_1}=T$, then the above conditions imply that $i_1=i_0+2$ and $Q_{i_0+1}=S_p$. As $T$ is a characteristic subgroup of $S_p$, the composition $\beta_{i_0+2}\circ \beta_{i_0+1} \circ \beta_{i_0}$ can be realized as single element in $\Aut_\EE(T)$, producing again a shorter factorization.
\end{enumerate}
With these restrictions, it follows that there always exists a factorization $\alpha = \beta_3|_{P_3} \circ \beta_2|_{P_2} \circ \beta_1|_P$, where $\alpha_1, \alpha_3 \in \Aut_{\EE}(S)$ and $\alpha_2 \in \Aut_{\EE}(Q)$ for $Q \in \{V, T, S\}$. Notice that such factorization may not be of minimal length, although that means no inconvenience.

Next, we refine such a factorization of $\alpha$, by modifying the morphisms $\beta_i$ if necessary, so that each morphism in the factorization restricts to the identity on $W$. To do that, let $\Aut_T(S) \leq \Aut_{\EE}(S)$ be the subgroup of automorphisms induced by conjugation by elements of $T$, let $H \leq \Aut_{\EE}(S)$ be the subgroup of automorphisms induced by conjugation by a power of the element $s \in S$, and let $G \leq \Aut_{\EE}(S)$ be the subgroup $\gen{\phi, \psi}$ or $\gen{\phi^2, \psi\phi^{-1}}$, according to the appropriate case in Theorem~\ref{FFpissaturated}. This way, $\Inn(S) = \gen{\Aut_T(S), H}$, and $\Aut_{\EE}(S) = \gen{G, \Inn(S)}$. Moreover, as $T$ is characteristic in $S$, $\Aut_T(S)$ is a normal subgroup of $\Aut_{\EE}(S)$.

Let $\alpha = \beta_3|_{P_3} \circ \beta_2|_{P_2} \circ \beta_1|_P$ be as above. By the above discussion, the automorphisms $\beta_1, \beta_3 \in \Aut_{\EE}(S)$ admit factorizations
\[
\beta_1 = \gamma_3 \circ \gamma_2 \circ \gamma_1 \qquad \mbox{and} \qquad \beta_3 = \lambda_1 \circ \lambda_2 \circ \lambda_3
\]
where $\gamma_3, \lambda_3 \in G$, $\gamma_2, \lambda_2 \in H$ and $\gamma_1, \lambda_1 \in \Aut_T(S)$. Notice that all the elements of $G$ and $H$ restrict to automorphisms of both $V$ and $T$. Hence, the morphism $\alpha$ can be factored as
\[
\alpha = \alpha_3|_{Q_3} \circ \alpha_2|_{Q_2} \circ \alpha_1|_P,
\]
where $\alpha_1 = \gamma_1$, $\alpha_2 = \lambda_2 \circ \lambda_3 \circ \beta_2 \circ \gamma_3 \circ \gamma_2$ and $\alpha_3 = \lambda_1$, and where $Q_2 = \alpha_1(P)$ and $Q_3 = \alpha_2(Q_2)$. Here, the morphisms $\lambda_i, \gamma_i$ for $i = 2, 3$ are restricted to match the domain of $\beta_2$ in order to produce $\alpha_2$. In particular, the new factorization of $\alpha$ still satisfies that $\alpha_1, \alpha_2 \in \Aut_{\EE}(S)$ and $\alpha_2 \in \Aut_{\EE}(Q)$, with $Q \in \{V, T, S\}$. Moreover, as $W \leq T$, we have $\beta_1|_W = \beta_3|_W = \Id_W$, and $\alpha|_W = \Id_W$ by assumption, which implies that $\beta_2|_W = \Id_W$. Notice also that $W \leq Q$.
\end{proof}

\begin{proof}[Proof of Theorem~\ref{FFpissaturated}]
Let $(S, \EE)$ be any of the fusion systems described in Theorem~\ref{FFpissaturated}. In order to show that $\EE$ is saturated, we apply Proposition~\ref{1.1LO}. Note that, by construction, $\EE$ satisfies axiom (III) of saturated fusion systems, which corresponds to condition (a) in Proposition~\ref{1.1LO}, and it remains to check condition (b): there exists a set $\setp$ of elements of order $p$ in $S$ satisfying conditions (i) through (iii) in Proposition~\ref{1.1LO}.

Let  $\setp = \lbrace v \in T \mid v \mbox{ is of order } p \rbrace$. Then, condition (i) in Proposition~\ref{1.1LO} follows from Lemma~\ref{conjtotor}, and condition (iii) in Proposition~\ref{1.1LO} follows from Lemma~\ref{centralZ}. It remains to check condition (ii) in Proposition~\ref{1.1LO}: if $x, y$ are $\EE$-conjugate and $y \in \setp$, there is some morphism $\rho \in \Hom_{\EE}(C_S(x), C_S(y))$ such that $\rho(x) = y$. Observe that, for $v \in T$,  $C_S(v) = T$ if $v \not\in Z$ and $C_S(v) = S$ if $v \in Z$.

Let first $v \in T$ be a non-central element of order $p$. If $v$ is $\EE$-conjugated to other $v' \in T$, then, by construction, there is an automorphism $\rho \in \Aut_{\EE}(T)$ such that $\rho(v) = v'$. If $v \in Z$, then $v = \zeta^\lambda$, for some $\lambda$, and $\zeta^\lambda$ is conjugated to $\zeta^\mu$ for all $\lambda, \mu \neq 0$ by an $\EE$-automorphism of $S$.

Finally, let $vs^i$ be an element of order $p$, with $v \in T$ and $i \neq 0$. By Lemma~\ref{conj-p} (b) there exists some $t \in T$ such that $t(vs^i)t^{-1} = s^i$. Hence, it is enough to prove that there is some $\rho \in \Hom_{\EE}(C_{S}(s^i), C_{S}(\zeta^\lambda))$ such that $\rho(s^i) = \zeta^\lambda$ for some $\lambda \neq 0$. Recall that $C_{S}(s^i) = V$ and $C_{S}(\zeta^\lambda)) = S$. Moreover, by construction there is an automorphism $\rho \in \Aut_{\EE}(V)$ sending $s^i$ to $\zeta^\lambda$, and its composition with the inclusion $V \leq S$ yields a morphism
\[
\rho \in \Hom_{\EE}(C_{S}(s^i), C_{S}(\zeta^\lambda))
\]
such that $\rho(s^i) = \zeta^{\lambda}$. This proves that condition (ii) in Propositon~\ref{1.1LO} is satisfied, and thus $\EE$ is saturated. The existence and uniqueness of a linking systems for every saturated fusion system is proved by R.~Levi and A.~Libman in \cite{LeviLibman}.
\end{proof}

The next step is to prove that our examples have no proper nontrivial strongly closed subgroups.This will be very useful to study the normal 
subsystems.
\begin{lemma}\label{no-proper-nontrivial-strongly-closed-subgroups}
Let $(S, \EE)$ be any of the fusion systems described in Theorem~\ref{FFpissaturated}. Then, $\EE$ contains no proper nontrivial strongly closed subgroups.
\end{lemma}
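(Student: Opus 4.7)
The plan is to reduce the statement to the already-established finite cases. Let $P \le S$ be a nontrivial strongly closed subgroup of $\EE$, where $\EE$ denotes $\FF$ or, for $p \geq 5$, $\widetilde{\FF}$. For each $k \ge 2$ set $P_k \definicio P \cap S_k$; the strategy is to show that $P_k \in \{1, S_k\}$ for every $k$ by transferring strong closure to the finite fusion system over $S_k$, and then to take the union.

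First I would verify that $P_k$ is strongly closed in the finite fusion system $\EE_k$ over $S_k$ (either $\FF_k$ or $\widetilde{\FF}_k$). Indeed, every generating automorphism of $\EE_k$ listed in Section~\ref{sec:finite} is, by construction, the restriction to $S_k$, $T_k$, or $V \leq S_k$ of the corresponding generator of $\EE$ from Theorem~\ref{FFpissaturated}, so that $\EE_k \subseteq \EE$ as fusion systems. Given $g \in P_k$ and $\varphi \in \Hom_{\EE_k}(\langle g \rangle, S_k) \subseteq \Hom_{\EE}(\langle g \rangle, S)$, strong closure of $P$ in $\EE$ forces $\varphi(g) \in P$, and since $\varphi(g) \in S_k$ by construction we obtain $\varphi(g) \in P_k$.

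Next I would invoke the finite-case results. The theorem of \cite{DRV} recalled in Subsection~\ref{sec:finite3} (for $p=3$) and Proposition~\ref{def_of_sfs}(b) (for $p \geq 5$) show that $\FF_k$ has no proper nontrivial strongly closed subgroup. For $\widetilde{\FF}_k$ with $p \geq 5$, the same conclusion follows because $\FF_k \subseteq \widetilde{\FF}_k$, so any subgroup strongly closed in $\widetilde{\FF}_k$ is a fortiori strongly closed in $\FF_k$. Therefore $P_k \in \{1, S_k\}$ for every $k \ge 2$.

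Finally, since $P \neq 1$ there is some $k_0$ with $P \cap S_{k_0} \neq 1$, which forces $P_{k_0} = S_{k_0}$; and for every $k \geq k_0$ one has $P_k \supseteq S_{k_0} \neq 1$, so $P_k = S_k$. Passing to the union gives $P \supseteq \bigcup_{k \geq k_0} S_k = S$, hence $P = S$. The main technical point is the inclusion $\EE_k \subseteq \EE$: one must check that the automorphisms $\phi, \psi$ of Notation~\ref{not:phipsi}, the permutation action of $\Sigma_p$ on $T$, and the matrix groups acting on $V$ all preserve and restrict correctly to $S_k$, $T_k$, and $V$ and that their restrictions are precisely the generators of $\EE_k$. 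This is immediate from the level-compatible definitions in Notation~\ref{notation_ptoral} and Notation~\ref{not:phipsi}, since the $\Sigma_p$-part permutes a basis and $\psi$ acts by a scalar in $(\Z/p^k)^\times$ for every $k$.
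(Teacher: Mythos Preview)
Your proof is correct and takes a genuinely different route from the paper's argument.

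The paper also begins by intersecting with $S_k$ to see that $Z(S_k)\le P_k$ (via \cite[Theorem~8.1]{AB}), hence $Z=Z(S)\le P$, but then it stays entirely inside the infinite fusion system: Lemma~\ref{conjtotor} gives that $\zeta$ is $\EE$-conjugate to $s$, so $s\in P$; the same lemma shows every element outside $T$ is $\EE$-conjugate to $s$, hence lies in $P$; and since products of such elements generate all of $T$, one gets $P=S$ directly.

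Your argument instead transfers strong closure down to the finite fusion system $\EE_k$ via the inclusion $\EE_k\subseteq\EE$, and then invokes the already-established absence of proper nontrivial strongly closed subgroups in $\FF_k$ (from the proofs of the \cite{DRV} theorem for $p=3$ and of Proposition~\ref{def_of_sfs}(b)(i) for $p\ge 5$). This is a clean reduction that reuses existing work; its cost is the verification that the generators of $\EE$ restrict to those of $\EE_k$, which is straightforward from Notation~\ref{notation_ptoral} and~\ref{not:phipsi} but does require matching the $\phi,\psi$ notation with the $\eta,\omega$ of \cite[Notation~5.9]{DRV} when $p=3$. The paper's direct approach avoids this bookkeeping and is self-contained for the infinite case, at the expense of repeating (in a slightly different form) an argument already made for the finite systems.
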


\begin{proof}
Let $P \leq S$ be a strongly closed nontrivial subgroup. In particular, $P$ is normal in $S$ and, if we write $P_k = S_{k} \cap P$, we 
have that $P_k$ is normal in $S_{k}$. Since $P$ is nontrivial, there exists $k$ such that $P_k$ is also nontrivial and, by \cite[Theorem 
8.1]{AB}, the center $Z(S_{k})$ intersects $P_k$ in a non trivially way. Since the center has order $p$, we must have $Z(S_{k}) \leq P_k$. This 
implies that $Z = Z(S) \leq P$.

The generator of the center is $\EE$-conjugated to $s$, as proved in Lemma~\ref{conjtotor}. Hence, $s \in P$, since $P$ is strongly closed by assumption. 
Moreover, we saw in the proof of Lemma~\ref{conjtotor} that all elements not in the maximal torus are conjugated to $s$, so all elements not in the 
maximal torus must belong also to $P$. Finally, if $P$ contains $s$ and all the elements not in the maximal torus it also contains all elements of the maximal torus, so the only 
possibility is $P = S$, and then $P$ is not proper.
\end{proof}

We finish this section with some properties of the examples described in Theorem~\ref{FFpissaturated}.
\begin{proposition}\label{prop_FFp_simple}
The following holds.
\begin{enumerate}
\item For $p \geq 3$, the fusion system $(S, \FF)$ is simple.
\item For $p \geq 5$, $(S, \FF)$ is the only proper normal subsystem of $(S, \widetilde{\FF})$, with index $2$. 
\item Let $(S, \EE, \TT)$ be any of the $p$-local compact groups described in Theorem~\ref{FFpissaturated}. Then, $\pi_1(|\TT|^\wedge_p)=\{e\}$.
\end{enumerate}
\end{proposition}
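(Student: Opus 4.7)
The overall strategy for all three parts is to reduce everything to computations on the three centric radical representatives $S$, $T$ and $V$, for which the relevant automorphism groups are listed explicitly in Theorem~\ref{FFpissaturated}.

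For part (a), the plan is to emulate Lemma~\ref{reducedsimple} in the infinite-rank setting. Let $(R,\FF') \lhd (S,\FF)$ be a proper normal subsystem. Axiom (N1) forces $R$ to be strongly closed, so by Lemma~\ref{no-proper-nontrivial-strongly-closed-subgroups} we have $R \in \{1,S\}$. The case $R = 1$ yields the trivial subsystem, which is finite. For $R = S$, I would invoke the infinite analog of \cite[Lemma~5.72]{Craven} (which follows from (N2), forcing $\Aut_{\FF'}(Q) \lhd \Aut_\FF(Q)$ for every $Q \leq S$, combined with the Sylow structure of $\Aut_\FF(Q)$ already established in the proof of Proposition~\ref{1.1LO}) to conclude that $\FF'$ has index prime to $p$ in $\FF$. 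By the classification from Appendix~\ref{appendixA}, it suffices to check $O^{p'}(\FF) = \FF$; this is done exactly as in the proof of Proposition~\ref{def_of_sfs}(b), by computing $O^{p'}(\Aut_\FF(Q))$ for $Q \in \{S,T,V\}$ and verifying that the generators $\phi^2,\psi\phi^{-1}$ of $\Aut_\FF(S)/\Inn(S)$ restrict into these subgroups on $V$ and $T$. Since $\rk(\FF) = p-1 \geq 1$, Definition~\ref{defisimple}(ii) yields simplicity.

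For part (b), the plan is to run the same reduction for $(S,\widetilde\FF)$: normal subsystems live over $1$ or over $S$, and the latter case forces index prime to $p$. I would then transcribe the finite computation in Proposition~\ref{def_of_sfs}(b) to the compact level. The groups $O^{p'}(\Aut_{\widetilde\FF}(Q))$ for $Q \in \{S,T,V\}$ are $\Inn(S)$, $A_p$ and $\SL_2(\F_p)$ respectively; among the generators $\phi,\psi$ of $\Out_{\widetilde\FF}(S)$, the elements $\phi^2$ and $\psi\phi^{-1}$ restrict into the relevant subgroup on $V$ and $T$ while $\phi$ does not, giving $\Gamma_{p'}(\widetilde\FF) \cong \Z/2$. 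The unique proper subsystem of index prime to $p$ is then identified with $(S,\FF)$ by matching generators, and it is normal in $\widetilde{\FF}$ because the minimal subsystem of index prime to $p$ is always normal by the theorem preceding Definition~\ref{defifoca}.

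For part (c), the plan is to apply the compact analog of the finite formula $\pi_1(|\LL|^\wedge_p) \cong S/\foc(\FF)$, which in this setting reads $\pi_1(|\TT|^\wedge_p) \cong S/O^p_\EE(S)$ and follows from the classification of subsystems of $p$-power index cited from \cite[Appendix~B]{Gonza2}. It then suffices to show $O^p_\EE(S) = S$ for $\EE \in \{\FF,\widetilde\FF\}$. By Definition~\ref{defifoca} we already have $T \leq O^p_\EE(S)$. Since $O^p(\Aut_\EE(V))$ contains $\SL_2(\F_p)$, which acts transitively on $V \setminus \{0\}$, I can pick $\varphi \in \SL_2(\F_p)$ with $\varphi(s) = s\zeta$ and $\varphi' \in \SL_2(\F_p)$ with $\varphi'(s) = \zeta$; the corresponding commutators $s \cdot \varphi(s)^{-1} = \zeta^{-1}$ and $s \cdot \varphi'(s)^{-1} = s\zeta^{-1}$ both lie in $O^p_\EE(S)$, giving $\zeta,s \in O^p_\EE(S)$, hence $O^p_\EE(S) = \gen{T,s} = S$. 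The main obstacle will be the two transfer steps between the finite and infinite worlds: the infinite analog of \cite[Lemma~5.72]{Craven} used in (a) and (b), and the identification $\pi_1(|\TT|^\wedge_p) \cong S/O^p_\EE(S)$ in the compact setting; both should be routine given the saturation and classification machinery cited from \cite{Gonza2} and from Appendix~\ref{appendixA}, but they are the two places requiring genuine care.
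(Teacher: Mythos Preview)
Your proposal is correct, and for part (b) it coincides with the paper's argument. For parts (a) and (c), however, you take a different route than the paper, and the comparison is worth recording.

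For part (a), you abstract the problem: any normal subsystem over $S$ has index prime to $p$ (via the infinite analog of \cite[Lemma~5.72]{Craven}), and then you check $O^{p'}(\FF)=\FF$ using the Appendix~\ref{appendixA} classification. This is clean but front-loads two nontrivial pieces of machinery. The paper instead argues directly: from (N2) it extracts the lower bounds $\Aut_{\FF'}(V)\geq \SL_2(\F_p)$ and $\Aut_{\FF'}(T)\geq A_p$ (or $\SL_2(\F_3)$ for $p=3$), and then, using only (N3) together with axiom~(II) and Alperin's theorem, it runs an explicit extension--restriction bootstrap. For instance, diagonal elements of $\SL_2(\F_p)\leq\Aut_{\FF'}(V)$ must extend to $\Aut_{\FF'}(S)$, and their restrictions to $T$ then force $\Aut_{\FF'}(T)=\Aut_\FF(T)$; a symmetric pass recovers $\Aut_{\FF'}(V)=\Aut_\FF(V)$ and $\Aut_{\FF'}(S)=\Aut_\FF(S)$. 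The paper's approach is more elementary and self-contained (it never invokes Theorem~\ref{app4} for this step), while yours is more conceptual and reusable. One small slip: your description of the $O^{p'}(\FF)=\FF$ check mentions only the generators $\phi^2,\psi\phi^{-1}$, which is the $p\geq 5$ list; for $p=3$ the generators are $\phi,\psi$, and the verification that both lie in $\Out^0_\FF(S)$ goes through $\psi|_T=-\Id\in\SL_2(\F_3)$ and $(\phi\psi)|_V=-\Id\in\SL_2(\F_3)$.

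For part (c), you compute $s\in O^p_\EE(S)$ by hand using a suitable $\varphi'\in\SL_2(\F_p)\leq O^p(\Aut_\EE(V))$ with $\varphi'(s)=\zeta$. The paper instead observes, citing \cite[B.12--B.13]{Gonza2}, that $O^p_\EE(S)$ is strongly $\EE$-closed and contains $T$, and then invokes Lemma~\ref{no-proper-nontrivial-strongly-closed-subgroups} to force $O^p_\EE(S)=S$. The paper's argument is shorter and avoids any explicit element chase; yours is independent of the strongly-closed property and would survive even without Lemma~\ref{no-proper-nontrivial-strongly-closed-subgroups}.
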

\begin{proof}
To prove (a) we have to check that every proper normal subsystem of $(S,\FF)$ is finite. As there is not any proper $\FF$-strongly closed subgroup in $S$, we only have to check that any normal subsystem over $S$ must be all $\FF$. Let $\FF' \subseteq \FF$ be a normal subsystem over $S$. By condition (N2) in Definition~\ref{definormal}, as we have to consider fusion subsystems over $S$, all the elements of order $p$ in $\Out_{\FF}(P)$ must be in $\Out_{\FF'}(P)$. This implies that:
\begin{itemize}
\item for $p=3$, $\Out_{\FF'}(T)\geq \SL_2(\F_3)$ and $\Out_{\FF'}(V)\geq \SL_2(\F_3)$;
\item for $p>3$, $\Out_{\FF'}(T)\geq A_p$ and $\Out_{\FF'}(V)\geq \SL_2(\F_p)$.
\end{itemize}
To finish the proof, we make use of property (N3) of normal subsystems and the saturation of $\FF'$. We also distinguish the cases $p=3$ and $p\geq5$. Recall the outer automorphisms of $S$, $\phi$ and $\psi$, defined in Notation~\ref{not:phipsi}. Recall also that their restrictions to $V$ and $T$ are respectively given by $\phi|_V=\big(\begin{smallmatrix}\lambda & 0\\ 0 & 1 \end{smallmatrix} \big)$, $\psi|_V=\big(\begin{smallmatrix}1 & 0 \\ 0 & \lambda \end{smallmatrix} \big)$ (where $V = \gen{s, \zeta}$ and we use additive notation), and by $\phi|_{T}=\sigma$ ($\sigma\in\Sigma_p$ an element of order $(p-1)$ normalizing $s$) and $\psi|_{T}=\lambda\Id$, respectively.
\begin{itemize}
\item Consider first $p=3$. In this case, $-\Id\in\SL_2(\F_3) \leq \Aut_{\FF'}(V)$ must extend to $N_{S}(V)$ (see \cite[Definition 2.2 (II)]{BLO3}), which is strictly larger than $V$. Applying Alperin's fusion theorem \cite[Theorem 3.6]{BLO3}, this morphism must extend to an automorphism $\varphi\in\Aut_{\FF'}(S)$, since $S$, $T$ and $V$ are representatives of the only $\FF$-conjugacy classes of centric radical subgroups in $(S, \FF)$. As $\varphi\in\Aut_{\FF}(S)$, $\varphi=\phi^i\psi^j c_g$, with $g\in S$, and checking the restriction to $V$, we get $\varphi=\phi\psi c_g$. The restriction of $\varphi=\phi\psi c_g$ to the maximal torus $T$ will give an element in $\Aut_{\FF'}(T)$ with determinant $-1$ (see Notation \ref{not:phipsi} for details, as the restrictions specified there applied to the infinite case too). This implies that $\Aut_{\FF'}(T)=\Aut_{\FF}(T)$. Moreover, this argument can be applied now to $-\Id\in\SL_2(\F_3) \leq \Aut_{\FF'}(T_3)$, which must extend to $\psi c_g$, and restricting to $\Aut_{\FF'}(V)$ we get an element of determinant $-1$, obtaining that $\Aut_{\FF'}(V)=\Aut_{\FF}(V)$. Finally, this shows that $\Aut_{\FF'}(S)$ contains $\langle\phi\psi, \psi\rangle=\langle \phi, \psi\rangle$. 
\item Consider now the case $p\geq 5$: the part of the proof concerning the extension of elements in $\SL_2(\F_p)\leq \Aut_{\FF'}(V)$ is the same, replacing $-\Id$ by diagonal matrices of the type $(\begin{smallmatrix}\lambda & 0 \\ 0 & \lambda^{-1}\end{smallmatrix})$, for $\lambda \in \F_p^\times$, which will extend to $(\phi\psi^{-1})^\lambda \in \Aut_{\FF'}(S)$. The extension and restriction argument implies that $\Aut_{\FF'}(T)\geq A_p\rtimes C_{p-1} \cong \Aut_{\FF}(T)$. Finally, consider $\varphi$ an element of order $(p-1)/2$ in $N_{A_p}(\langle s \rangle)$ (we can assume $\varphi=\phi^2\in\Aut_\FF(S)$) and the induced action on $T$. This action must extend to $N_{S}(T)=S$ and the restriction to $V$ gives matrices of determinant $\lambda^2$, a square in $\F_p^\times$, obtaining $\Aut_{\FF'}(T)=\Aut_{\FF}(T)$. In this case, this also implies that $\Aut_{\FF'}(S)$ contains $\langle\phi\psi^{-1}, \phi^2\rangle$. 
\end{itemize}
Now the result follows, as we have seen that $\FF'$ must contain all the generators of $\FF$.

To see (b), we have to proceed similarly, but all the computations have been already done: any normal saturated fusion subsystem of $(S,\widetilde\FF,\widetilde\LL)$ must be over $S$ (as there is not any proper strongly $\widetilde\FF$-closed subgroup by Lemma~\ref{no-proper-nontrivial-strongly-closed-subgroups}) and must contain all the automorphims of order $p$. So, applying the computations in the poof of (a), we see that $\FF = O^{p'}(\widetilde\FF)$ (see Remark~\ref{rem_minnor}), and by Corollary~\ref{minnor} it follows that $\FF$ is a normal subsystem of index prime to $p$ of $\widetilde{\FF}$. These computations also show that $\Gamma_{p'}(\widetilde\FF)\cong\Z/2$, where $\Gamma_{p'}(\widetilde\FF)$ is the group considered in Theorem~\ref{app4}. This tells that all the possible saturated fusion subsystems of index prime to $p$ are in bijective correspondence with the subgroups of $\Z/2$, and this proved (b).

Finally, we prove part (c). Let $(S, \EE, \TT)$ be any of the $p$-local compact groups described in Theorem~\ref{FFpissaturated}. By the Hyperfocal Subgroup Theorem \cite[Theorem B.5]{Gonza2}, we have
\[
\pi_1(|\TT|^{\wedge}_p) = S/O^p_{\EE}(S),
\]
where $O^p_{\EE}(S)$ is the hyperfocal subgroup defined in Definition~\ref{defifoca}. By \cite[B.12-B.13]{Gonza2}, it follows that $O^p_{\EE}(S) \lneqq S$ is a strongly $\EE$-closed subgroup containing $T$, and hence we have $O^p_{\EE}(S) = S$ by Lemma~\ref{no-proper-nontrivial-strongly-closed-subgroups}.
\end{proof}

\subsection{On the exoticness of \texorpdfstring{$p$}{p}-local compact groups}

In this subsection we prove that there does not exist any compact Lie group or $p$-compact group  realizing any of the fusion systems described in Theorem~\ref{FFpissaturated}. We start proving that there does not exist any $p$-compact group with this fusion. In particular, as a connected compact Lie group corresponds to a $p$-compact group, this also shows that there does not exist any connected compact Lie group realizing these fusion systems.

Let us fix first the usual definitions and notations when working with $p$-compact groups (we refer to W. Dwyer and C. Wilkerson papers \cite{DW0} and \cite{DW2} for more details): a $p$-compact group is a triple $(X,BX,e)$ where $X$ is a space such that $H^*(X;\F_p)$ is finite, $BX$ a pointed $p$-complete space and $e\colon X \to \Omega(BX)$ is a homotopy equivalence. We refer to $X$ as a $p$-compact group and $BX$ and $e$ are assumed. If $X$ and $Y$ are $p$-compact groups, a homomorphism $f\colon X \to Y$ is a pointed map $Bf \colon BX \to BY$. Two homomorphisms $f,f'\colon X \to Y$ are conjugate if $Bf$ and $Bf'$ are freely homotopic.

The following is a general result about $p$-compact groups and $p$-local compact groups. In order to avoid confusion with the notation in Theorem \ref{FFpissaturated}, let $(R, \EE, \TT)$ be a $p$-local compact group, and let $P \leq R$ be a fully $\EE$-centralized subgroup. We also fix the notation $\theta\colon BR\to |\TT|^\wedge_p$ as the composition of the inclusion of the Sylow $p$-subgroup $BR\to |\TT|$ \cite[Proposition 4.4]{BLO3} and $p$-completion.  In this situation, there is a well-defined notion of centralizer $p$-local compact group of $P$, denoted by $(C_R(P), C_{\EE}(P), C_{\TT}(P))$, see \cite[Section 2]{BLO6} for the explicit definition and properties. The following proposition describes the relation between algebraic centralizers and mapping spaces for $p$-local compact groups associated to $p$-compact groups.

\begin{proposition}\label{prop_p_compact}
Let $(R,\EE,\TT)$ a $p$-local compact group such that $|\TT|^\wedge_p\simeq BX$, where $(X,BX,e)$ is a $p$-compact group. Then, the following holds.
\begin{enumerate}
	\item $X$ is connected if and only if all the elements in $R$ are $\EE$-conjugate to elements in the maximal torus.
	\item Let $P$ be a fully $\EE$-centralized subgroup of $R$. Let also $(C_R(P), C_{\EE}(P), C_{\TT}(P))$ be the centralizer $p$-local compact group of $P$, and let $\theta|_{BP}\colon BP \to |\TT|^\wedge_p$ be the composition of the inclusion $BP \to BR$ with $\theta\colon BR \to |\TT|^\wedge_p$. Then there is a homotopy equivalence $\Map(BP,|\TT|^\wedge_p)_{\theta|_{BP}}\simeq |C_\TT(P)|^\wedge_p$.
\end{enumerate}
\end{proposition}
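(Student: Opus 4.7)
The plan is as follows. Part (a) reduces to a fundamental group computation: since $BX \simeq |\TT|^\wedge_p$, connectedness of the $p$-compact group $X$ is equivalent to $\pi_1(|\TT|^\wedge_p) = 1$. The Hyperfocal Subgroup Theorem \cite[Theorem B.5]{Gonza2} identifies $\pi_1(|\TT|^\wedge_p) \cong R/O^p_\EE(R)$, and $T \leq O^p_\EE(R)$ holds by construction. Hence the claim becomes: $O^p_\EE(R) = R$ if and only if every element of $R$ is $\EE$-conjugate to an element of $T$. For the forward direction I would appeal to Dwyer--Wilkerson's structure theory for connected $p$-compact groups \cite{DW0, DW2}: every finite $p$-subgroup of $X$ is subconjugate to the maximal torus. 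Applied to $\langle x \rangle$ for $x \in R$ and translated through the realization $\EE = \FF_R(X)$, this produces the required morphism $\varphi \in \Hom_\EE(\langle x \rangle, T)$.

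For the backward direction, assume every $x \in R$ has some $\varphi_x \in \Hom_\EE(\langle x \rangle, R)$ with $\varphi_x(x) \in T$. The point is that $\EE$-conjugate elements of $R$ represent the same class in $\pi_1(|\TT|^\wedge_p) \cong R/O^p_\EE(R)$: realizing $\varphi_x$ in the linking system $\TT$ exhibits a free homotopy in $|\TT|$ between the loops representing $x$ and $\varphi_x(x)$. Since $T \leq O^p_\EE(R)$, the class of $\varphi_x(x)$ is trivial, hence so is the class of $x$, forcing $O^p_\EE(R) = R$. Should a more combinatorial argument be required, one would decompose $\varphi_x$ via Alperin's fusion theorem \cite[Theorem 3.6]{BLO3} into restrictions of automorphisms $\gamma_i \in \Aut_\EE(Q_i)$ with $Q_i$ fully normalized $\EE$-centric $\EE$-radical, and use the Sylow factorization $\Aut_\EE(Q_i) = \Aut_R(Q_i) \cdot O^p(\Aut_\EE(Q_i))$: the $O^p$-factor produces commutators $g \cdot \gamma(g)^{-1}$ lying in $O^p_\EE(R)$ by definition of the hyperfocal subgroup, while the $\Aut_R(Q_i)$-factor is inner conjugation in $R$ and preserves classes in the quotient.

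Part (b) is an application of the general mapping-space theorem for $p$-local compact groups, extending \cite[Theorem 6.3]{BLO2} to the compact setting as developed in \cite{BLO3}; see also \cite[Section 2]{BLO6} for the definition and basic properties of the centralizer $p$-local compact group $(C_R(P), C_\EE(P), C_\TT(P))$ associated to a fully $\EE$-centralized $P \leq R$. The $p$-compact hypothesis is inessential here beyond presenting $|\TT|^\wedge_p$ as $BX$. The main obstacle in the whole proof is therefore the backward direction of part (a): one must either invoke the Hyperfocal Subgroup Theorem cleanly, or carry through the Alperin-plus-Sylow argument, being careful to work with fully normalized representatives so that $\Aut_R(Q_i)$ is genuinely Sylow in $\Aut_\EE(Q_i)$ and the factorization is available.
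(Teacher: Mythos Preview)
Your treatment of part (a) takes a different route from the paper and is essentially correct, though it can be streamlined. The paper argues both directions directly via \cite[Proposition~3.11]{DW2}: for $X$ connected, every homomorphism $\Z/p^n \to X$ extends to $\Z/p^\infty \to X$ and hence factors (up to conjugacy) through the maximal torus by \cite[Proposition~8.11]{DW0}; conversely, if $X$ is not connected then some $\Z/p^n \to X$ fails to extend, and the image of a generator in $R$ cannot be $\EE$-conjugate into $T$. No hyperfocal subgroup appears. Your backward direction via the Hyperfocal Subgroup Theorem is valid, but the Alperin-plus-Sylow factorization you worry about is unnecessary: by \cite[B.12--B.13]{Gonza2}, $O^p_\EE(R)$ is strongly $\EE$-closed and contains $T$, so if every $x \in R$ is $\EE$-conjugate to some element of $T \leq O^p_\EE(R)$, strong closure (applied to the inverse isomorphism) forces $x \in O^p_\EE(R)$ immediately. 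Also, your statement that ``every finite $p$-subgroup of $X$ is subconjugate to the maximal torus'' is false in general (think of extraspecial subgroups of compact Lie groups); what holds, and what you need, is the cyclic case.

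Part (b), however, is a genuine gap. You dismiss it as a citation of a mapping-space theorem ``as developed in \cite{BLO3}'', but no such theorem appears there. What \cite[Proposition~10.4]{BLO3} provides is only that the adjoint map $\gamma_P \colon BC_R(P) \to \Map(BP, |\TT|^\wedge_p)_{Bf|_{BP}}$ realizes $C_R(P)$ as a Sylow $p$-subgroup of the mapping space. The remaining work --- identifying the fusion system $\EE'$ induced on $C_R(P)$ by this mapping space with the algebraic centralizer $C_\EE(P)$ --- is precisely the content of the proposition, and the paper carries it out by comparing, for each $\varphi \colon Q \to Q'$, the homotopy-commutativity of three diagrams relating $B(P \times Q) \to |\TT|^\wedge_p$ and $BPQ \to |\TT|^\wedge_p$; the nontrivial implication (that $\varphi \in \EE'$ forces $\varphi \in C_\EE(P)$) uses Zabrodsky's Lemma \cite[Proposition~3.5]{DwyerBCAT94} applied to the central map $BK \to |\TT|^\wedge_p$ where $K = \ker(\mu|_{P \times Q})$, exploiting the $p$-compact hypothesis to identify the relevant mapping space with $|\TT|^\wedge_p$ itself. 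Alternatively one may cite \cite[Theorem~D]{Gonza3}, but you cannot simply assert that \cite{BLO3} already contains this; it does not.
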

\begin{proof}
By \cite[Proposition 10.1 and Theorem 6.3(a)]{BLO3}, we can consider $f\colon R\to X$ a maximal discrete $p$-toral subgroup and we use this notation, and the corresponding $Bf\colon BR \to BX$ in all this proof. Moreover, as we are also considering the $p$-local compact group structure of $(X,BX,e)$, which is $(R,\EE,\TT)$, we assume that the composition of $\theta\colon BR \to |\TT|^\wedge_p$ with the fixed homotopy equivalence $|\TT|^\wedge_p\simeq BX$ is $Bf\colon BR\to BX$: we can assume this, as all Sylow $p$-subgroups in a $p$-compact group are conjugated \cite[Proposition 10.1]{BLO3}.
	
We prove first (a): Consider $T \subset R$ the inclusion of the maximal torus in $R$. Define $i\colon T \to X$ to be the composition of the inclusion map and $f$. Recall from 
\cite[Definition 10.2]{BLO3} that the saturated fusion system over $R$ corresponding to $X$, which is denoted by $\EE_{R,f}(X)$, is defined as:
	\[
	\Mor_{\EE_{R,f}(X)}(P,Q)\definicio\{\varphi \in \Hom(P,Q) \mid Bf|_{BQ}\circ B\varphi \simeq Bf|_{BP} \} .
	\] 
	Assume first that $X$ is connected and let $x\in R$. The composition of the inclusion of $\langle x \rangle$ in $R$ and $f$ gives a monomorphism $g\colon \Z/p^n \to X$, where $p^n=|\langle x \rangle|$.
	By \cite[Proposition 3.11]{DW2}, as $X$ is connected,  any morphism $g\colon \Z/p^n \to X$ extends to $\overline{g}\colon \Z/p^\infty \to X$. Applying now \cite[Proposition 8.11]{DW0} we get that there is $h\colon \Z/p^\infty \to T$ such that $i\cdot h$ is conjugate to $\overline g$. The restriction of $h$ to $\Z/p^n$ gives a morphism $\varphi \in \Mor_{\EE_{R,f}(X)}(\langle x \rangle,T)$.\\
	If $X$ is not connected, also by \cite[Proposition 3.11]{DW2}, there exists $g\colon \Z/p^n\to X$ which does not extend to $\Z/p^{\infty}$. By the maximality of $R$, this map factors through $\widetilde{g}\colon\Z/p^n\to R$. Consider $x\definicio\widetilde g(1)\in R$. This element cannot be conjugated to the maximal torus, otherwise, we would be able to extend $\widetilde g$ to a map from $\Z/p^\infty$, providing an extension of $g$.
	
    To prove (b) consider first $(C_R(P), C_\EE(P), C_\TT(P))$, the $p$-local compact group defined as the centralizer of $P$ in $(R,\EE,\TT)$. This $p$-local compact group exists by \cite[Theorem 2.3]{BLO6} because $P$ is fully $\EE$-centralized. 
    This way, we may consider the mapping space $\Map(BP,|\TT|^\wedge_p)_{\theta|_{BP}}$ as a $p$-compact group. By \cite[Section 10]{BLO3}, this has a $p$-local compact group structure which we denote as $(R',\EE',\TT')$, with $|\TT'|^\wedge_p = \Map(BP,|\TT|^\wedge_p)_{\theta|_{BP}}$.
    
    By \cite[Proposition 10.4]{BLO3}, as $P$ is fully $\EE$-centralized, the map $\gamma_P\colon BC_R(P)\to \Map(BP,|\TT|^\wedge_p)_{\theta|_{BP}}=|\TT'|^\wedge_p$ defined as the adjoint to the composite
    \begin{equation}\label{adjoint}
    B(P\times C_R(P)) \stackrel{B\mu}{\longrightarrow}BR\stackrel{\theta}{\longrightarrow} |\TT|^\wedge_p
    \end{equation}
    with $\mu \colon P \times C_R(P)\to R$ the multiplication, is a Sylow $p$-subgroup of $|\TT'|^\wedge_p$. So we get that we can consider $R'=C_R(P)$ and $\gamma_P\colon BR' \to |\TT'|^\wedge_p$ a Sylow map.
     
	Thus, it remains to prove that for all $Q$, $Q'$ subgroups of $C_R(P)$, the homomorphism $\varphi\colon Q \to Q'$ belongs to $\Hom_{C_\EE(P)}(Q,Q')$ if and only if $\varphi$ belongs to $\Hom_{\EE'}(Q,Q')$. 

	Consider now the following three diagrams:
	\[
	\xymatrix{BP\times BQ \ar[r]^{\Id\times B\varphi} \ar[d]_{B\mu|_{P\times Q}} \ar @{} [dr] | {(*)} & BP\times BQ' \ar[d]^{B\mu|_{P\times Q'}} \\
		BPQ \ar[r]_{B\widetilde{\varphi}} & BPQ'} 
	\xymatrix{BPQ \ar[dr]_{\theta|_{BPQ}}  \ar[rr]^{B\widetilde{\varphi}} \ar @{} [drr] | {(**)} & & BPQ'\ar[dl]^{\theta|_{BPQ'}} \\
		& |\TT|^\wedge_p & }
	\]
	\[
	\xymatrix{BP\times BQ \ar[rr]^{\Id\times B\varphi} \ar[d]_{B\mu|_{P\times Q}} \ar @{} [ddrr] | {(***)} & & BP\times BQ' \ar[d]^{B\mu|_{P\times Q'}} \\
		BPQ \ar[dr]_{\theta|_{BPQ}} & & BPQ'\ar[dl]^{\theta|_{BPQ'}} \\
		& |\TT|^\wedge_p &} 
	\]	

	And split the proof in several steps:
	
	\textbf{Step 1:} A group homomorphism $\varphi\colon Q \to Q'$ belongs to $\Hom_{C_\EE(P)}(Q,Q')$ if and only if we can construct the homotopy commutative diagram $(**)$:
	
	If $\varphi\colon Q \to Q'$ belongs to $\Hom_{C_\EE(P)}(Q,Q')$, there exists $\widetilde{\varphi}\in\Hom_\EE(PQ,PQ)$ such that $\widetilde{\varphi}|_{Q}=\varphi$ and $\widetilde{\varphi}|_P=\Id_P$. But, by definition of $\EE$ as a fusion system corresponding to a $p$-compact group $X$ and the fixed notation at the beginning of this proof, this is equivalent to $\theta|_{BPQ} \simeq \theta|_{BPQ'} \circ B\widetilde{\varphi}$.
     
    \textbf{Step 2:} A group homomorphism $\varphi\colon Q \to Q'$ belongs to $\Hom_{\EE'}(Q,Q')$ if and only if we can construct the homotopy commutative diagram $(***)$:
    
    Recall the inclusion of the Sylow $p$-subgroups $\gamma_P\colon BC_R(P) \to \Map(BP,|\TT|^\wedge_p)_{\theta|_{BP}}$ considered above. Now $\varphi\in\Hom_{\EE'}(Q,Q')$ if and only if $B\gamma_P|_{BQ} \simeq B\gamma_P|_{BQ'}\circ B\varphi$. And, considering adjoint maps (see Equation~\eqref{adjoint}), this is equivalent to verifying that the composition $\theta|_{BPQ}\circ B\mu|_{P\times Q}$ and  $\theta_{BPQ'} \circ B\mu|_{P\times Q'} \circ \Id \times B\varphi$ are homotopy equivalent, obtaining diagram $(***)$.

	\textbf{Step 3:} Homotopy commutative diagram $(*)$ can be constructed for any $\varphi \in \Hom_{C_\EE(P)}(Q,Q')$ and any $\varphi \in \Hom_{\EE'}(Q,Q')$:

	If $\varphi \in \Hom_{C_\EE(P)}(Q,Q')$, we can construct the commutative diagram $(*)$ by definition of $C_\EE(P)$.

	Assume now $\varphi \in \Hom_{\EE'}(Q,Q')$ (so, we have commutative diagram $(***)$). Consider diagram $(*)$ at the level of groups:
	\[ \xymatrix{P\times Q \ar[r]^{\Id\times \varphi} \ar[d]_{\mu} & P\times Q' \ar[d]^{\mu} \\
		PQ \ar[r]_{\widetilde{\varphi}} & PQ'}
	\] 
	Which exists (in the category of groups) if all the elements of $P$ commute with all the elements of $Q$ and $Q'$, and $\varphi|_{P\cap Q}=\Id|_{P\cap Q}$ (here we use $\Id$ as notation for the corresponding inclusion). Moreover, in this case, $\widetilde{\varphi} \colon P \times Q \to PQ$ is uniquely defined as $\widetilde{\varphi}(ab) = a\varphi(b)$.	As $Q,Q'\leq C_R(P)$, the commutation condition is satisfied, so we have to check that $\varphi|_{P\cap Q}=\Id|_{P\cap Q}$: $P \cap Q \leq Z(P)$ and the homomorphism $B\varphi|_{B(P \cap Q)}$ composed with the inclusion of $BC_R(P)$ in $|\TT'|^\wedge_p$ is central (as $p$-compact groups), so, by \cite[Lemma~6.5]{DW2}, the morphism of $p$-compact toral groups $B\varphi|_{B(P \cap Q)}\colon B(P \cap Q)\to BPQ'$ is unique. By \cite[Proposition 1.10]{BLO3}, $B\varphi|_{B(P \cap Q)}$ corresponds to a group morphism $\varphi'\colon P\cap Q \to PQ'$ which is the composition of the inclusion (because of the commutative diagram $(***)$) with a conjugation in $PQ'$. But, as $PQ'$ centralizes $Z(P)$, conjugation by any element in $PQ'$  is the identity in $Z(P)$ and, in particular, in $P\cap Q\leq Z(P)$. So $\varphi|_{P \cap Q} = \Id|_{P \cap Q}$.

    \textbf{Step 4:} As diagram $(***)$ can be constructed from $(*)$ and $(**)$, we get the inclusion $\Hom_{C_\EE(P)}(Q,Q') \subset \Hom_{\EE'}(Q,Q')$.

    \textbf{Step 5:} If $\varphi$ is a morphism in $\EE'$, then diagram $(***)$ is homotopy commutative. We want to see that this implies that $(**)$ is also homotopy commutative. For that, consider $K$ to be the kernel of $\mu|_{P\times Q}$. The map from $BK\to |\TT|^\wedge_p$ is the composition $\theta|_{BPQ}\circ B\mu|_{BK}$, and it is a central map in $|\TT|^\wedge_p$, so $\Map(BK,|\TT|^\wedge_p)_{\theta|_{BPQ}\circ B\mu|_{BK}}\simeq |\TT|^\wedge_p$ (here we are using that $|\TT|^\wedge_p$ is the classifying space of a $p$-compact group). This allows us to see that the map $|\TT|^\wedge_p \to \Map(BK,|\TT|^\wedge_p)_{\theta|_{BPQ}\circ B\mu|_{BK}}$ which sends each point $t$ to the constant map $t$ is an homotopy equivalence. So, we can apply Zabrodsky Lemma as stated in \cite[Proposition 3.5]{DwyerBCAT94} and we get that $B\mu|_{P\times Q}$ induce an equivalence $\Map(BPQ,|\TT|^\wedge_p)\to \Map(BP\times BQ,|\TT|^\wedge_p)_{[\theta|_{BPQ}\circ B\mu|_{P\times Q}]}$. The class $[\theta|_{BPQ}]\in\pi_0(\Map(BPQ,|\TT|^\wedge_p))$ corresponds to $[\theta|_{BPQ}\circ B\mu|_{P\times Q}]\in\pi_0(\Map(BP\times BQ,|\TT|^\wedge_p))$, and the class $[\theta|_{BPQ'}\circ B\widetilde{\varphi}]$ corresponds to $[\theta|_{BPQ'}\circ B\widetilde{\varphi}\circ B\mu|_{P\times Q}]$. Using diagrams $(*)$ and $(***)$, we have that the following classes in $\pi_0(\Map(BP\times BQ,|BK|^\wedge_p))$ are the same:
    \[
    [\theta|_{BPQ'}\circ B\widetilde{\varphi}\circ B\mu|_{P\times Q}]=
    [\theta|_{BPQ'}\circ B\mu|_{P\times Q'} \circ \Id\times B\varphi]=
    [\theta|_{BPQ}\circ B\mu|_{P\times Q}].
    \]
    This implies that $(**)$ is also homotopy commutative and $\varphi\in\Hom_{C_\EE(P)}(Q,Q')$, which finishes the proof.
\end{proof}

\begin{remark}
In \cite[Theorem D]{Gonza3} the first author proves a more general version of Proposition~\ref{prop_p_compact} (b). The proof we give above is independent from \cite{Gonza3}.
\end{remark}

\begin{theorem}\label{exoticaspcompact}
There does not exist any $p$-compact group realizing the $p$-local compact groups in Theorem \ref{FFpissaturated}.
\end{theorem}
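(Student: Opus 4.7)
The plan is to argue by contradiction: suppose $|\TT|^\wedge_p \simeq BX$ (respectively $|\widetilde{\TT}|^\wedge_p \simeq BX$) for some $p$-compact group $X$, and derive a contradiction from the structure theory of connected $p$-compact groups. By Proposition~\ref{prop_FFp_simple}(c), $\pi_1(BX) = \pi_0(X) = 1$, so $X$ is connected; combining this with Proposition~\ref{prop_p_compact} and Dwyer--Wilkerson's structure theory, the maximal torus of $X$ must be $T$ and its Weyl group must be $W_X = \Aut_\EE(T)$, which is then forced to act on $T$ as a finite $\Z_p$-pseudo-reflection group.

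For $p \geq 5$, the contradiction comes from a direct eigenvalue analysis showing $W_X$ is not generated by pseudo-reflections. For $\widetilde\FF$, $W_X = \Sigma_p \times C_{p-1}$ acts via the standard representation of $\Sigma_p$ tensored with scalar multiplication by $C_{p-1}$; the only pseudo-reflections are the transpositions $(\tau, 1)$ (any other element has at least two eigenvalues different from $1$), and these generate only the proper subgroup $\Sigma_p$ of $W_X$. For $\FF$, the group $W_X = A_p \rtimes C_{p-1}$ is realized as the index-$2$ kernel inside $\Sigma_p \times C_{p-1}$ of the character $(\sigma, \lambda) \mapsto \operatorname{sgn}(\sigma)\lambda^{(p-1)/2}$, which excludes the transpositions $(\tau, 1)$; thus $W_X$ contains no pseudo-reflection at all. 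In either subcase $W_X$ fails to be a pseudo-reflection group, yielding the required contradiction.

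The hard case is $p = 3$, since $W_X = \GL_2(\F_3)$ is a genuine $\Z_3$-pseudo-reflection group (Shephard--Todd number $12$), so the previous argument fails. Here I would apply Proposition~\ref{prop_p_compact}(b) to the elementary abelian $V = \langle s, \zeta\rangle$. An explicit check using Alperin's theorem---noting that $\Aut_\FF(S)$ and $\Aut_\FF(V)$ both preserve the subgroup $V$, while inner conjugation by $S$ produces only subgroups of the form $\langle ws, \zeta\rangle \not\leq T$---shows that no $\FF$-conjugate of $V$ is contained in $T$; hence $V$ is fully $\FF$-centralized, $C_\FF(V)$ is the trivial fusion system over $V$, and Proposition~\ref{prop_p_compact}(b) forces $BC_X(V) \simeq K(V,1)$. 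On the other hand, $\GL_2(\F_3)$ acts on $T$ without nonzero fixed vectors, so $\zeta \notin Z(X)$, and $C_X(\zeta)$ is a connected rank-$2$ $3$-compact group whose Weyl group is the $\GL_2(\F_3)$-stabilizer $\Sigma_3$ of $\zeta$, i.e., a $3$-completion of $SU(3)$ (or of an isogenous group). Inside this centralizer, $s \notin T$ is non-toral, and the centralizer of a non-toral order-$3$ element of $SU(3)$ is the maximal torus; thus $C_X(V) = C_{C_X(\zeta)}(s)$ is a rank-$2$ discrete $3$-toral group whose classifying space is $K(\Z_3^2, 2)$, incompatible with $K(V, 1)$ on the level of homotopy groups. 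This contradiction completes the proof.
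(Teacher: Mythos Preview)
Your argument for $p \geq 5$ is correct and genuinely different from the paper's. The paper argues uniformly for all $p$: it applies Proposition~\ref{prop_p_compact}(b) to $Z = \langle\zeta\rangle$, identifies $C_\EE(Z)$ with the fusion of $T \rtimes \Sigma_p$ (resp.\ $T \rtimes A_p$) via Lemma~\ref{centralZ}, and then invokes Lemma~\ref{lemma_ishiguro} to conclude that such a centralizer cannot be a $p$-compact group since $\Sigma_p$ and $A_p$ are not $p$-nilpotent. Your pseudo-reflection route avoids the centralizer machinery and Ishiguro's lemma entirely, trading them for the Dwyer--Wilkerson theorem that the Weyl group of a connected $p$-compact group is generated by pseudo-reflections. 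The eigenvalue analysis you sketch is right: for $p \geq 5$ an element $(\sigma,\mu)\in\Sigma_p\times C_{p-1}$ fixes a hyperplane of the $(p-1)$-dimensional module iff $\sigma$ has a $(p-2)$-dimensional $\mu^{-1}$-eigenspace, and a cycle-count shows this forces $\mu=1$ and $\sigma$ a transposition; none of these lie in the index-$2$ kernel $A_p\rtimes C_{p-1}$. The paper's approach is more uniform; yours is more elementary once connectedness is known.

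Your $p=3$ argument, however, has a real gap. You assert that $C_X(\zeta)$ is connected and then identify it with $SU(3)^\wedge_3$, but neither step is justified: connectedness of centralizers requires $\pi_1(X)$ torsion-free, which you have not established (you only have $\pi_0(X)=1$), and even granting connectedness and Weyl group $\Sigma_3$ you have not pinned down the isogeny type. Worse, the identification is actually inconsistent with the hypothesis: Proposition~\ref{prop_p_compact}(b) together with Lemma~\ref{centralZ} already force $BC_X(\zeta)\simeq B(T\rtimes\Sigma_3)^\wedge_3$, and this is \emph{not} $BSU(3)^\wedge_3$, since in $\FF_S(T\rtimes\Sigma_3)$ the element $s$ is not conjugate into $T$ whereas in $SU(3)$ it is. If you compute $C_{C_X(\zeta)}(s)$ from the actual fusion data rather than from $SU(3)$, you get $C_{T\rtimes\Sigma_3}(s)=\langle\zeta,s\rangle=V$, perfectly consistent with your own computation that $C_\FF(V)$ is trivial over $V$ --- so no contradiction arises at $V$. (Your phrase ``non-toral order-$3$ element of $SU(3)$'' is also problematic: $SU(3)$ is connected, so it has none.) The genuine contradiction at $p=3$ is precisely the paper's: $B(T\rtimes\Sigma_3)^\wedge_3$ would have to be the classifying space of the $p$-compact group $C_X(\zeta)$, but Lemma~\ref{lemma_ishiguro} rules this out since $\Sigma_3$ is not $3$-nilpotent.
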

Before the proof of the theorem we need a result which follows from a result by K.~Ishiguro \cite[Proposition 3.1]{ishiguro}.
\begin{lemma}\label{lemma_ishiguro}
Let $p$ be a prime number and $H$ a finite non $p$-nilpotent group acting on a torus $T$. Then, there does not exist any $p$-compact group realizing the fusion system of $T\rtimes H$ over the prime $p$.
\end{lemma}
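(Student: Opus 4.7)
The approach is a short contradiction argument that traces the hypothesis back to Ishiguro's cited characterisation. Suppose, toward a contradiction, that a $p$-compact group $X$ realizes the fusion system $\FF$ of the compact Lie group $G = T \rtimes H$ at the prime $p$, and fix $S \in \Syl_p(G)$. By \cite{BLO3}, each of $G$ and $X$ equips $S$ with a $p$-local compact group structure whose underlying fusion system is $\FF$, say $(S, \FF, \LL_G)$ with $|\LL_G|^\wedge_p \simeq BG^\wedge_p$, and $(S, \FF, \LL_X)$ with $|\LL_X|^\wedge_p \simeq BX$. I would then invoke the existence and uniqueness of centric linking systems over a saturated fusion system on a discrete $p$-toral group, as proved by Levi and Libman in \cite{LeviLibman} (and already used in the proof of Theorem~\ref{FFpissaturated}), to conclude $\LL_G \cong \LL_X$, and hence $BG^\wedge_p \simeq BX$. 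In particular, $BG^\wedge_p$ would itself be the classifying space of a $p$-compact group.

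The second step is to apply \cite[Proposition~3.1]{ishiguro}, which characterises exactly when $B(T \rtimes H)^\wedge_p$ is the classifying space of a $p$-compact group: namely, precisely when the component group $H$ is $p$-nilpotent. Since $H$ is, by hypothesis, not $p$-nilpotent, this contradicts the previous paragraph and completes the proof.

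The key conceptual ingredient — and the only substantive point in the argument — is the passage from agreement of fusion systems to a homotopy equivalence of $p$-completed classifying spaces, which is precisely what Levi--Libman's uniqueness of centric linking systems in the discrete $p$-toral setting provides. Once that bridge is in place, the rest is a direct quotation of Ishiguro's theorem, with the non-$p$-nilpotency hypothesis on $H$ doing all the remaining work.
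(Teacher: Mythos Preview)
Your argument is correct and follows essentially the same route as the paper: obtain $BG^\wedge_p \simeq BX$ and then contradict Ishiguro's criterion \cite[Proposition~3.1]{ishiguro}. The only minor difference is in the bridge step: the paper cites \cite[Theorem~10.7]{BLO3} directly to get $|\TT|^\wedge_p \simeq BX$ once the $p$-local compact group structures agree, whereas you make the uniqueness of linking systems (Levi--Libman) explicit to pass from agreement of fusion systems to agreement of linking systems; your version is arguably a bit more careful, since the lemma's hypothesis speaks only of the fusion system.
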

\begin{proof}
Consider the compact Lie group $G\definicio T\rtimes H$. By \cite[Theorem 9.10]{BLO3} there is a $p$-local compact group $(R,\EE,\TT)$ with $\EE$ the fusion system of $G$ over a Sylow $p$-subgroup $S$ and $|\TT|^\wedge_p \simeq BG^\wedge_p$. Assume there is a $p$-compact group $X$ realizing also the p-local compact group $(R,\EE,\TT)$. Then, by \cite[Theorem 10.7]{BLO3}, $|\TT|^\wedge_p \simeq BX$, hence $BG^\wedge_p \simeq BX$. In this case, by \cite[Proposition 3.1]{ishiguro}, the group of components of $G$ must be a $p$-nilpotent group, in contradiction with the hypothesis in $H$.
\end{proof}

\begin{proof}[Proof of Theorem~\ref{exoticaspcompact}]
Let $(S, \EE, \TT)$ be any of the examples in Theorem~\ref{FFpissaturated}, and assume that there exists some $p$-compact group $X$ such that $BX \simeq |\LL|^\wedge_p$. Let $Z$ be the centre of $S$, which is isomorphic to $\Z/p\Z$. By definition, it is a fully centralized subgroup, so we can construct the centralizer of $Z$ in $(S,\EE,\TT)$, which is again a $p$-local compact group that we denote by
$(S,C_{\EE}(Z),C_{\TT}(Z))$. By Lemma~\ref{centralZ}, $C_{\EE}(Z)$ is the fusion system over $S$ of either $T \rtimes \Sigma_3$ (if $p = 3$), $T \rtimes A_p$ (if $p \geq 5$ and $\EE = \FF$), or $T \rtimes \Sigma_p$ (if $p \geq 5$ and $\EE = \widetilde{\FF}$). As neither $\Sigma_p$ for $p\geq 3$, nor $A_p$ for $p\geq5$ are $p$-nilpotent, it follows by Lemma~\ref{lemma_ishiguro} that none of these is the fusion system of a $p$-compact group.

If we denote by $C_X(Z)$ the centralizer in $X$ of the composition of maps $Z \hookrightarrow S \to X$ we have that $C_X(Z)$ is again a $p$-compact group by \cite{DW0}. But by Proposition~\ref{prop_p_compact} (b), $C_X(Z)\simeq |C_{\TT}(Z)|^\wedge_p$, so $(S,C_{\EE}(Z))$ is the fusion system of a $p$-compact group, getting a contradiction with the previous paragraph.
\end{proof}

Until now we have proved that the $p$-local compact groups described in Theorem~\ref{FFpissaturated} cannot be realized by $p$-compact groups. This result includes the impossibility of these $p$-local compact groups to be realized by compact Lie groups whose group of components is a $p$-group. In order to prove that the $p$-local compact groups of Theorem~\ref{FFpissaturated} are not realized by any compact Lie group,  it remains to eliminate the case of compact Lie groups whose group of components is not a $p$-group.

\begin{theorem}\label{exoticasLiegroup}
There does not exist any compact Lie group realizing the $p$-local compact groups of Theorem~\ref{FFpissaturated}.
\end{theorem}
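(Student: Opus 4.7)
The plan is to reduce Theorem~\ref{exoticasLiegroup} to the already-proven Theorem~\ref{exoticaspcompact} by passing to the identity component. Suppose, for contradiction, that some compact Lie group $G$ realizes one of the $p$-local compact groups $(S,\EE,\TT)$ of Theorem~\ref{FFpissaturated}, where $\EE\in\{\FF,\widetilde{\FF}\}$, and let $G_0\trianglelefteq G$ denote the identity component.

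First I would observe that $G_0$ is a nontrivial connected compact Lie group. Since $S$ contains the infinite discrete $p$-torus $T\cong(\Z/p^\infty)^{p-1}$ as a subgroup, $S$ is itself infinite, hence $G$ is infinite and $G_0$ is positive-dimensional. The topological closure of the abelian discrete $p$-toral group $T$ inside $G$ is a standard torus, and so it is connected and contained in $G_0$; thus $T$ is already contained in $R\definicio S\cap G_0$, and in particular $R$ is nontrivial.

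Next I would apply Lemma~\ref{normal_subgroup_normal_subsystem} to the closed normal subgroup $G_0\trianglelefteq G$ to conclude that $R\in\Syl_p(G_0)$ and that $\FF_R(G_0)$ is a normal subsystem of $\EE=\FF_S(G)$. Axiom (N1) says that $R$ is strongly $\EE$-closed, so by Lemma~\ref{no-proper-nontrivial-strongly-closed-subgroups} the only possibility with $R\neq 1$ is $R=S$. Hence $\FF_S(G_0)\trianglelefteq\EE$ is a normal subsystem over the full group $S$, and its rank equals $\rk(S)=p-1>0$. I would then invoke Proposition~\ref{prop_FFp_simple}: when $\EE=\FF$, simplicity implies that every proper normal subsystem is finite, so $\FF_S(G_0)$ must coincide with $\FF$; when $\EE=\widetilde{\FF}$, part (b) says that the only proper normal subsystem of $\widetilde{\FF}$ is $\FF$, so $\FF_S(G_0)\in\{\widetilde{\FF},\FF\}$.

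In every case $G_0$ is a connected compact Lie group realizing one of the saturated fusion systems $\FF$ or $\widetilde{\FF}$. By the classical results of Dwyer--Wilkerson \cite{DW0}, $BG_0^\wedge_p$ then carries the structure of a connected $p$-compact group, and by \cite[Theorem~9.10]{BLO3} together with the uniqueness of centric linking systems \cite{LeviLibman}, we obtain $BG_0^\wedge_p\simeq|\LL|^\wedge_p$ or $BG_0^\wedge_p\simeq|\widetilde{\LL}|^\wedge_p$ respectively. This contradicts Theorem~\ref{exoticaspcompact}. The argument is short because the substantive work has been done earlier; the main (and only delicate) step is forcing $R=S$, which relies decisively on Lemma~\ref{no-proper-nontrivial-strongly-closed-subgroups} and lands us in the simple/almost-simple regime controlled by Proposition~\ref{prop_FFp_simple}.
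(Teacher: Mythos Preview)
Your proposal is correct and follows essentially the same line as the paper's proof: pass to the identity component $G_0$, use Lemma~\ref{normal_subgroup_normal_subsystem} to get that $S\cap G_0$ is strongly closed, invoke Lemma~\ref{no-proper-nontrivial-strongly-closed-subgroups} to force $S\cap G_0=S$, then use Proposition~\ref{prop_FFp_simple} to identify $\FF_S(G_0)$ with $\FF$ or $\widetilde{\FF}$, and finally contradict Theorem~\ref{exoticaspcompact}. Your justification that $T\leq G_0$ via the connected closure of $T$ is in fact more explicit than the paper's, which simply asserts $T\leq S\cap G_0$ without further comment.
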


\begin{proof}
Consider first the fusion system $(S, \FF)$, for $p \geq 3$, in Theorem~\ref{FFpissaturated}, and assume that there is a compact Lie group $G$ such that $\FF \cong \FF_{S}(G)$ for $S \in \Syl_p(G)$. Let $G_0 \trianglelefteq G$ be the 
connected component of the identity in $G$. By Lemma~\ref{normal_subgroup_normal_subsystem}, we have that $T \leq S \cap G_0$ is strongly closed in 
$\FF_{S}(G)$, but, by Lemma~\ref{no-proper-nontrivial-strongly-closed-subgroups}, $\FF$ has no proper nontrivial strongly closed subgroups, hence 
$S \leq G_0$. Then, again by Lemma~\ref{normal_subgroup_normal_subsystem}, $\FF_{S}(G_0) \trianglelefteq \FF_{S}(G)$, but, since $\FF$ is a 
simple saturated fusion system by Proposition~\ref{prop_FFp_simple}(a), we must have $\FF_{S}(G_0) \cong \FF_{S}(G)$. This is impossible since a 
connected compact Lie group gives rise to a $p$-compact group, and the saturated fusion system $\FF$ is not realized by any $p$-compact group by 
Theorem~\ref{exoticaspcompact}. 

Let now $p \geq 5$, consider the $p$-local compact group $(S, \widetilde{\FF}, \widetilde{\LL})$ in Theorem~\ref{FFpissaturated}, and assume that there is a compact Lie group $\widetilde{G}$ with $S \in \Syl_p(\widetilde{G})$ and such that $(S, \widetilde{\FF}, \widetilde{\LL}) \cong (S, \FF_{S}(\widetilde{G}), \LL_{S}^c(\widetilde{G}))$. Let again $\widetilde{G}_0 \trianglelefteq \widetilde{G}$ be 
the connected component of the identity in $\widetilde{G}$. As before, by Lemma~\ref{normal_subgroup_normal_subsystem}, we have $S \leq 
\widetilde{G}_0$ and $\FF_{S}(\widetilde{G}_0) \trianglelefteq \FF_{S}(\widetilde{G})$. Then, we know by Proposition~\ref{prop_FFp_simple}(b) that $\FF$ is the only proper nontrivial normal subsystem of $(S,\widetilde\FF,\widetilde\LL)$. Therefore, in this case we must have 
$\FF_{S}(\widetilde{G}_0) \cong \FF$ or $\FF_{S}(\widetilde{G}_0) \cong \widetilde\FF$, but we have proved in Theorem~\ref{exoticaspcompact} 
that there is no $p$-compact group realizing any of these two fusion systems, hence $\widetilde{G}$ cannot exist.
\end{proof}


\appendix

\section{Fusion subsystems of index prime to \texorpdfstring{$p$}{p} in \texorpdfstring{$p$}{p}-local compact groups}\label{appendixA}

In this section we generalize to the compact case the results in \cite{BCGLO2} about detection of subsystems of index prime to $p$ of a given fusion system (see Definition~\ref{defifoca}). We also show that the \textit{minimal} subsystem of index prime to $p$ is always a normal subsystem. Throughout this appendix, we fix a $p$-local compact group $\g = (S, \FF, \LL)$.

\begin{definition}
A subgroup $P \leq S$ is \textit{$\FF$-quasicentric} if, for all $Q \in P^{\FF}$ that is fully $\FF$-centralized, the centralizer fusion system $C_{\FF}(Q)$ is the fusion system of $C_S(Q)$.
\end{definition}

We shall also use the following notation.
\begin{itemize}
\item For a subset $\hh \subseteq \Ob(\FF)$, $\FF_{\hh} \subseteq \FF$ denotes the full subcategory of $\FF$ with object set $\hh$. The set of all morphisms in $\FF_{\hh}$ is denoted $\Mor(\FF_{\hh})$. In the particular case where $\hh$ is the set of all $\FF$-quasicentric subgroups of $S$, we simply write $\FF^q$ instead of $\FF_{\hh}$.
\item For a (discrete) group $\Gamma$, $\mathfrak{Sub}(\Gamma)$ denotes the set of nonempty subsets of $\Gamma$.
\end{itemize}
The main tool to detect subsystems of a given fusion system are the so-called \emph{fusion mapping triples}, which were already generalized from \cite{BCGLO2} to the context of $p$-local compact groups in \cite[Definition B.7]{Gonza2}.

When constructing fusion mapping triples we may have to deal with infinitely many conjugacy classes of subgroups of $S$. The \textit{bullet functor} $\functor \colon \FF \to \FF$ defined in \cite[Section 3]{BLO3} is the tool to reduce to situations involving only finitely many $\FF$-conjugacy classes. We refer to \cite{BLO3} for the properties of $(-)^{\bullet}$ which we use in this appendix. Given a nonempty full subcategory $\FF_0 \subseteq \FF$, we denote by $\FF_0^{\bullet} \subseteq \FF_0$ the full subcategory whose objects are the subgroups $P \in \Ob(\FF_0)$ such that $P = P^{\bullet}$. A priori, $\FF_0^{\bullet}$ could be empty, but this is not be the case when $\FF_0$ is closed by over-groups, as $P \leq P^{\bullet}$ for all $P \leq S$. The next result constitutes the key to inductively construct fusion mapping triples.

\begin{lemma}\label{fmt2}
Let $\hh_0 \subseteq \Ob(\FF^{\bullet q})$ be a nonempty subset closed by $\FF$-conjugacy and over-groups (in $\FF^{\bullet}$) and $\pp$ be an $\FF$-conjugacy class in $\FF^{\bullet q}$ maximal among those not contained in $\hh_0$. Set $\hh = \hh_0 \bigcup \pp$ and let $\FF_{\hh_0} \subseteq \FF_{\hh} \subseteq \FF^{\bullet q}$ be the corresponding full subcategories. Finally, let $(\Gamma, \theta, \Theta)$ be a fusion mapping triple for $\FF_{\hh_0}$ and, for each $P \in \pp$ which is fully $\FF$-normalized, fix a homomorphism
\[
\Theta_P \colon \Aut_{\FF}(P) \longrightarrow N_{\Gamma}(\theta(C_S(P)))/\theta(C_S(P))
\]
satisfying the following conditions:
\begin{enumerate}
\item $x \cdot \theta(f) \cdot x^{-1} = \theta(f(g))$ for all $g \in P$, $f \in \Aut_{\FF}(P)$ and $x \in \Theta_P(f)$; and
\item $\Theta_P(f) \supseteq \Theta(f')$ for all $P \lneqq Q \leq S$ such that $P \lhd Q$ and $Q$ is fully $\FF$-normalized, and for all $f \in \Aut_{\FF}(P)$ and $f' \in \Aut_{\FF}(Q)$ such that $f = f'|_P$.
\end{enumerate}
Then, there exists a unique extension of $\Theta$ to a fusion mapping triple $(\Gamma, \theta, \widetilde{\Theta})$ on $\FF_{\hh}$ such that $\widetilde{\Theta}(f) = \Theta_P(f)$ for all $f \in \Aut_{\FF}(P)$.
\end{lemma}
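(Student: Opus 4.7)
The plan is to extend $\Theta$ inductively to $\FF_{\hh}$ by adjoining the newly prescribed automorphism data $\Theta_P$ for $P\in\pp$, following the strategy of the analogous result in the finite case from \cite{BCGLO2}. The key combinatorial fact driving the construction is the maximality of $\pp$: any proper over-group (in $\FF^{\bullet}$) of a subgroup in $\pp$ must already lie in $\hh_0$. Combined with axiom~(II) of saturated fusion systems, which allows every isomorphism to a fully normalized subgroup to extend to the normalizer, this reduces every morphism in $\FF_{\hh}\setminus\Mor(\FF_{\hh_0})$ to a composition involving morphisms of $\FF_{\hh_0}$ and a single automorphism of a fixed fully normalized representative of $\pp$.

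Concretely, I would fix a fully $\FF$-normalized representative $R\in\pp$ and, for each $P\in\pp$, choose an isomorphism $\alpha_P\colon P\to R$ (taking $\alpha_R=\Id_R$). By axiom~(II) each $\alpha_P$ extends to $\bar\alpha_P\colon N_S(P)\to N_S(R)$, and since $P\lneqq N_S(P)$, maximality of $\pp$ forces $N_S(P),N_S(R)\in\hh_0$, so $\bar\alpha_P\in\Mor(\FF_{\hh_0})$. Any isomorphism $\varphi\colon P\to P'$ with $P,P'\in\pp$ factors uniquely as $\varphi=\alpha_{P'}^{-1}\circ f\circ\alpha_P$ for some $f\in\Aut_\FF(R)$, and we set
\[
\widetilde\Theta(\varphi)=\Theta(\bar\alpha_{P'})^{-1}\cdot\Theta_R(f)\cdot\Theta(\bar\alpha_P),
\]
interpreting the product as a coset modulo $\theta(C_S(R))\trianglelefteq N_\Gamma(\theta(C_S(R)))$. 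Morphisms whose source or target is in $\hh_0$ are handled by composing with inclusions into suitable over-groups, which themselves lie in $\hh_0$ by maximality.

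Next I would verify that $\widetilde\Theta$ is a fusion mapping triple extending the given data. Compatibility with composition is immediate from the formula. The conjugation condition $x\cdot\theta(g)\cdot x^{-1}=\theta(\varphi(g))$ for $x\in\widetilde\Theta(\varphi)$ and $g\in P$ reduces, on $\Aut_\FF(R)$, to hypothesis~(1) and transports along the $\alpha_P$ to cover all of $\pp$. The extension-compatibility axiom, that $\widetilde\Theta(\widetilde\varphi)\subseteq\widetilde\Theta(\varphi)$ whenever $\widetilde\varphi$ extends $\varphi$, reduces via the same decomposition and maximality of $\pp$ to precisely hypothesis~(2). Uniqueness follows formally, since once these values are prescribed on automorphisms and on $\FF_{\hh_0}$, the fusion-mapping-triple axioms determine $\widetilde\Theta$ on all of $\FF_{\hh}$.

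The main obstacle is well-definedness of the displayed formula for $\widetilde\Theta(\varphi)$. Changing the extension $\bar\alpha_P$ of a fixed $\alpha_P$ alters $\bar\alpha_P$ by conjugation by an element of $C_S(R)$, which is absorbed in the quotient by $\theta(C_S(R))$. Replacing $\alpha_P$ by $\gamma\circ\alpha_P$ for some $\gamma\in\Aut_\FF(R)$ changes $f$ to $f\circ\gamma^{-1}$ and $\bar\alpha_P$ to $\bar\gamma\circ\bar\alpha_P$, and the resulting correction is exactly $\Theta_R(\gamma)^{-1}\cdot\Theta(\bar\gamma)$, which equals $1$ modulo $\theta(C_S(R))$ by hypothesis~(2) applied to $R\lhd N_S(R)$. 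The bullet functor guarantees that $\FF^{\bullet q}$ has only finitely many $\FF$-conjugacy classes, so no transfinite induction is required; the compact case therefore introduces no new ideas beyond the finite case once the bullet reduction is in place, the extra care being only that all normalizers $N_S(P)$ remain discrete $p$-toral, which is automatic for $P\in\Ob(\FF^{\bullet q})$.
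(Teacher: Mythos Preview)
Your proposal is correct and follows essentially the same approach as the paper, which simply defers to \cite[Lemma B.9]{Gonza2} (itself an adaptation of the finite-case argument in \cite{BCGLO2}) with minor modifications to restrict to $\FF^{\bullet q}$. The only technical wrinkle you elide is that $N_S(P)$ need not lie in $\Ob(\FF^{\bullet})$, so one must pass to $(N_S(P))^{\bullet}$ throughout to ensure membership in $\hh_0$, and your invocation of hypothesis~(2) for $R \lhd N_S(R)$ tacitly assumes $N_S(R)$ is fully $\FF$-normalized; these are exactly the ``minor modifications'' the paper alludes to and are straightforward to handle.
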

\begin{proof}
This is \cite[Lemma B.9]{Gonza2} with minor modifications to restrict to $\FF^{\bullet q}$.
\end{proof}

\begin{lemma}\label{app51}
Let $Q \leq S$ be an $\FF$-quasicentric subgroup, let $P \leq S$ be a subgroup such that $Q \lhd P$, and let $\varphi, \varphi' \in \Hom_{\FF}(P,S)$ be such that $\varphi|_Q = \varphi'|_Q$, and such that $\varphi(Q)$ is fully $\FF$-centralized. Then there exists some $x \in C_S(\varphi(Q))$ such that $\varphi' = c_x \circ \varphi$.
\end{lemma}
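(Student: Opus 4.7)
Set $Q' := \varphi(Q)$ and $\psi := \varphi' \circ \varphi^{-1}\colon \varphi(P) \to S$, which is a morphism in $\FF$ with $\psi|_{Q'} = \Id_{Q'}$; the lemma is equivalent to finding $x \in C_S(Q')$ such that $\psi = c_x$ on $\varphi(P)$. Since $Q \lhd P$ implies $Q' \lhd \varphi(P)$, we have $\varphi(P) \leq N_S(Q')$. A direct computation using that $\psi$ is a homomorphism, is the identity on $Q'$, and that $\varphi(P)$ normalizes $Q'$ shows that $r^{-1}\psi(r) \in C_S(Q')$ for every $r \in \varphi(P)$, which is the necessary condition for the desired conclusion.

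The core step is to use saturation axiom~(II) to produce a suitable extension of $\psi$. Since $Q'$ is fully $\FF$-centralized and elements of $C_S(Q')$ trivially satisfy the defining property of $N_\psi$ (they commute with $\psi|_{Q'} = \Id_{Q'}$), one extends $\psi$ to an $\FF$-morphism $\widetilde{\psi}\colon \varphi(P) \cdot C_S(Q') \to S$ that agrees with $\psi$ on $\varphi(P)$ and restricts to the identity on $Q'$. If the image of $\psi$ is not itself fully $\FF$-centralized, one first precomposes with an $\FF$-isomorphism to reduce to that case and then transports the extension back.

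The restriction $\widetilde{\psi}|_{C_S(Q')}$ lands in $C_S(Q')$ (because $\widetilde{\psi}$ fixes $Q'$ pointwise, so for $c \in C_S(Q')$ and $q \in Q'$, $\widetilde{\psi}(c) q \widetilde{\psi}(c)^{-1} = \widetilde{\psi}(cqc^{-1}) = q$) and, together with its extension $\widetilde{\psi}|_{C_S(Q') \cdot Q'}$ which is the identity on $Q'$, it defines a morphism in the centralizer fusion system $C_{\FF}(Q')$. Now the assumption that $Q$ is $\FF$-quasicentric together with the full $\FF$-centralization of $Q' = \varphi(Q)$ forces $C_{\FF}(Q') = \FF_{C_S(Q')}(C_S(Q'))$, so there exists $x \in C_S(Q')$ with $\widetilde{\psi}|_{C_S(Q')} = c_x$.

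To finish, consider $\theta := c_x^{-1} \circ \widetilde{\psi}\colon \varphi(P) \cdot C_S(Q') \to S$, an $\FF$-morphism restricting to the identity on both $Q'$ and $C_S(Q')$. The final step is to argue that $\theta$ is the identity on its entire domain, whence $\psi = c_x$ on $\varphi(P)$; this uses the fact that an $\FF$-morphism of this form is determined by its behaviour on the subgroup $Q' \cdot C_S(Q')$, an observation that ultimately rests on the quasicentric hypothesis again. The main obstacle will be the careful verification of the extension step via axiom~(II) and this concluding uniqueness argument, which together require the full strength of the $\FF$-quasicentric hypothesis on $Q$ and the normality $Q \lhd P$.
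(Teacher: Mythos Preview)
Your extension step via axiom~(II) does not work as stated. You write that ``elements of $C_S(Q')$ trivially satisfy the defining property of $N_\psi$ (they commute with $\psi|_{Q'} = \Id_{Q'}$)'', but this misreads the definition of $N_\psi$. For $\psi\colon \varphi(P)\to S$, the extension axiom gives
\[
N_\psi = \{\, g \in N_S(\varphi(P)) \mid \psi\, c_g\, \psi^{-1} \in \Aut_S(\psi(\varphi(P))) \,\},
\]
which is a subgroup of $N_S(\varphi(P))$, not of $N_S(Q')$. There is no reason for $C_S(Q')$ to normalize $\varphi(P)$, so in general $C_S(Q')\not\leq N_\psi$ and you cannot extend $\psi$ to $\varphi(P)\cdot C_S(Q')$ in one stroke. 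The hypothesis that $Q'$ is fully centralized is likewise irrelevant to applying axiom~(II) to $\psi$; what would be needed is that $\psi(\varphi(P))$ is fully centralized, and even granting your reduction for that, the containment $C_S(Q')\leq N_\psi$ still fails.

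This is not a cosmetic issue: the paper's proof is organized as an induction precisely to get around this obstruction. One first uses quasicentricity to arrange $\varphi|_{C_P(Q)\cdot Q}=\Id$; if $C_P(Q)\cdot Q\gneqq Q$ one inducts on the larger (still quasicentric) subgroup, and if $C_P(Q)\leq Q$ one passes to the normalizer fusion system $N_\FF^K(Q)$ and manufactures an element of $C_S(Q)\setminus Q$ lying in $N_\varphi$, again enlarging the domain before inducting. Your concluding step, by contrast, would be fine once the extension exists: since $Q'\cdot C_S(Q')$ is $\FF$-centric (as $Q'$ is fully centralized), \cite[Proposition~2.8]{BLO3} gives $\theta=c_z$ with $z\in Z(Q'\cdot C_S(Q'))\leq C_S(Q')$, so $\psi=c_{xz}$ as desired. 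But you need a genuine inductive mechanism to produce that extension; the single application of axiom~(II) you propose does not suffice.
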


\begin{proof}
As a first simplification, we may replace $P$ by $\varphi(P)$ and $Q$ by $\varphi(Q)$. This way, we may assume that $\varphi' = \incl_P^S$ and $\varphi|_Q = \Id_Q$. Next, we justify reducing to the case where $Q = Q^{\bullet}$. Indeed, by \cite[Lemma 1.23]{Gonza3}, $Q$ is fully $\FF$-centralized if and only if $Q^{\bullet}$ is so, in which case we have $C_{\FF}(Q) = C_{\FF}(Q^{\bullet})$. In particular, this implies that $Q^{\bullet}$ is also $\FF$-quasicentric. The properties of the functor $(-)^{\bullet}$ then justify the restriction to the case $Q = Q^{\bullet}$. This last reduction allows us to do induction on $|Q|$ within the set of objects of $\FF^{\bullet}$, as this category contains finitely many $S$-conjugacy classes of objects. Finally, note that the statement is true if $Q$ is $\FF$-centric by \cite[Proposition 2.8]{BLO3}.

As $Q$ is $\FF$-quasicentric and fully $\FF$-centralized, it follows that $\varphi|_{C_P(Q)}$ corresponds to the conjugation homomorphism induced by some $x \in C_S(Q)$. Thus, after composing with $(c_x)^{-1}$, we may assume without loss of generality that $\varphi|_{C_P(Q)Q} = \Id$. Moreover, if $C_P(Q)Q \geq Q$, then the statement follows by induction on $|Q|$, simply by taking $\overline{P} = P^{\bullet}$ and $\overline{Q} = (C_P(Q)Q)^{\bullet}$.

Assume then that $C_P(Q) \leq Q$, and set $K = \Aut_P(Q)$. Following the notation of \cite[Section 2]{BLO6}, we write
\[
N_S^K(Q) = \{x \in N_S(Q) \mid c_x \in K\}.
\]
and let $N_{\FF}^K(Q)$ be the fusion system over $N_S^K(Q)$ with morphism sets
\[
\Hom_{N_{\FF}^K(Q)}(R, R') = \{\gamma \in \Hom_{\FF}(R, R') \mid \exists \alpha \in \Hom_{\FF}(RQ, R'Q) \colon \alpha|_Q \in K \mbox{ and } \alpha|_R = \gamma \}.
\]
Note that $P$, $\varphi(P)$ and $C_S(Q)$ are subgroups of $N_S^K(Q)$. By \cite[Lemma 2.2]{BLO6}, if $Q$ is not fully $K$-normalized in $\FF$, then there exists some $\lambda \in \Hom_{\FF}(N_S^K(Q), S)$ such that $\lambda(Q)$ is fully $\lambda K \lambda^{-1}$-normalized in $\FF$. Hence, by replacing all the subgroups by their images through $\lambda$ if necessary, we may assume that $Q$ is already fully $K$-normalized in $\FF$.

The fusion system $N_{\FF}^K(Q)$ is saturated by \cite[Theorem 2.3]{BLO6}. Replacing $\FF$ by $N_{\FF}^K(Q)$ if needed, we can then assume that $S = N_S^K(Q) = P C_S(Q)$, and $\FF = N_{\FF}^K(Q)$. In particular, each morphism in $\FF$ extends to a morphism whose domain contains $Q$ and whose restriction to $Q$ corresponds to conjugation by some element of $P$.

Fix $\alpha \in \Hom_{\FF}(P, S)$ such that $\alpha(P)$ is fully normalized in $\FF$. Since $\alpha|_Q = c_g$ for some $g \in P$, we can replace $\alpha$ by $\alpha \circ (c_g)^{-1}$, so that $\alpha|_Q = \Id$. If $\alpha$ and $\alpha \circ \varphi^{-1}$ are both given by conjugation by elements in $C_S(Q)$, then so is $\varphi$. Thus, to prove the statement it is enough to prove the same statement under the extra assumption that $\varphi(P)$ is fully normalized in $\FF$.

Next, note that $(C_S(Q)Q)/Q$ is a nontrivial normal subgroup of $N_S(Q)/Q = S/Q$. Hence there is some $x \in C_S(Q) \setminus Q$ such that $1 \neq xQ \in Z(S/Q)$. It follows that $x \in N_S(P)$ and $x$ acts (via conjugation) as the identity on both $Q$ and $P/Q$. Thus,
\[
c_x \in \Ker(\Aut_{\FF}(P) \to \Aut_{\FF}(Q) \times \Aut(P/Q)),
\]
which is a normal $p$-subgroup of $\Aut_{\FF}(P)$ by \cite[Lemma 1.7]{Gonza3}. In addition, $\Aut_S(\varphi(P)) \in \Syl_p(\Aut_{\FF}(\varphi(P)))$ as $\varphi(P)$ is fully normalized in $\FF$. It follows that $\varphi c_x \varphi^{-1} \in \Aut_S(\varphi(P))$ (after replacing $\varphi$ by $\varphi \circ \omega$ for certain $\omega \in \Aut_{\FF}(\varphi(P))$ is needed). Thus, $x \in N_{\varphi}$, and $Q \lneqq N_{\varphi}$. By axiom (II) of saturated fusion systems, $\varphi$ extends to some $\overline{\varphi} \in \Hom_{\FF}(N_{\varphi}, S)$.

Set $R = (N_{\varphi})^{\bullet}$. Set also $\overline{Q} = (C_R(Q)Q)^{\bullet}$ and $\overline{P} = N_R(\overline{Q})$. Note that $\overline{Q} \lhd \overline{P}$. By construction, $x \in \overline{Q} \setminus Q$. As $Q$ is $\FF$-quasicentric, $\overline{\varphi}|_{C_{\overline{P}}(Q)}$ corresponds to conjugation by some element $g \in C_S(Q)$, and we can replace $\overline{\varphi}$ by $\overline{\varphi} \circ (c_g)^{-1}$, so that $\overline{\varphi}|_{\overline{Q}} = \Id$. As $\overline{Q} \gneqq Q$, this finishes the induction step.
\end{proof}

\begin{lemma}\label{app5}
Let $(\Gamma, \theta, \Theta)$ be a fusion mapping triple on $\FF^c$. Then there is a unique extension
\[
\widetilde{\Theta} \colon \Mor(\FF^q) \longrightarrow \mathfrak{Sub}(\Gamma)
\]
of $\Theta$ such that $(\Gamma, \theta, \widetilde{\Theta})$ is a fusion mapping triple on $\FF^q$.
\end{lemma}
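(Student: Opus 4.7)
The plan is to work first within $\FF^{\bullet q}$, which by the properties of the bullet functor summarized in \cite{BLO3} contains only finitely many $\FF$-conjugacy classes, and then transfer the result to all of $\FF^q$. For the bullet reduction, note that quasicentricity is preserved under overgroups, so the assignment $P \mapsto P^{\bullet}$ sends $\FF^q$ into $\FF^{\bullet q}$, and every morphism $\varphi \in \Hom_\FF(P, Q)$ in $\FF^q$ extends uniquely to $\varphi^{\bullet} \in \Hom_\FF(P^{\bullet}, Q^{\bullet})$. Together with the composition and naturality axioms for fusion mapping triples, this forces a canonical bijection between mapping triples on $\FF^{\bullet q}$ and on $\FF^q$, so both existence and uniqueness on $\FF^q$ reduce to the same assertions on $\FF^{\bullet q}$.

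Within $\FF^{\bullet q}$, I build $\widetilde{\Theta}$ by iterated application of Lemma~\ref{fmt2}, starting from $\hh_0 = \Ob(\FF^{\bullet c})$. This set is nonempty (it contains $S$) and is closed under $\FF$-conjugacy and overgroups in $\FF^{\bullet}$ since overgroups of $\FF$-centric subgroups are $\FF$-centric; the given $\Theta$ on $\FF^c$ restricts to a fusion mapping triple on $\FF_{\hh_0}$. At each inductive step I pick a conjugacy class $\pp$ maximal among those of $\FF^{\bullet q}$ not yet contained in $\hh_0$. For each fully $\FF$-normalized representative $P \in \pp$, the subgroup $P$ is $\FF$-quasicentric and fully $\FF$-centralized but not $\FF$-centric, so the overgroup $\overline{P} \definicio (P \cdot C_S(P))^{\bullet}$ is $\FF$-centric and lies in $\hh_0$. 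Using saturation together with quasicentricity of $P$, every $f \in \Aut_\FF(P)$ lifts to some $\widetilde{f} \in \Aut_\FF(\overline{P})$, and I set $\Theta_P(f)$ to be the image of $\Theta(\widetilde{f})$ in $N_\Gamma(\theta(C_S(P)))/\theta(C_S(P))$.

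The fact that $\Theta_P(f)$ does not depend on the choice of $\widetilde{f}$ is precisely where Lemma~\ref{app51} enters: two lifts $\widetilde{f}$ and $\widetilde{f}'$ of $f$ have the same restriction to the fully $\FF$-centralized quasicentric subgroup $P \lhd \overline{P}$, so Lemma~\ref{app51} gives $\widetilde{f}' = c_x \circ \widetilde{f}$ for some $x \in C_S(P)$, hence $\Theta(\widetilde{f}')$ and $\Theta(\widetilde{f})$ differ by $\theta(x) \in \theta(C_S(P))$ and agree in the quotient. Condition (1) of Lemma~\ref{fmt2} is inherited from the corresponding axiom for $\Theta(\widetilde{f})$ evaluated on $P \leq \overline{P}$, while condition (2) follows by choosing, for each fully $\FF$-normalized $Q \rhd P$, lifts $\widetilde{f}$ that are themselves restrictions of lifts of the prescribed automorphism of $Q$. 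After finitely many iterations of Lemma~\ref{fmt2}, one obtains the required $\widetilde{\Theta}$ on $\FF^{\bullet q}$; uniqueness at each step is forced by the requirement that $\Theta_P(f)$ and $\Theta(\widetilde{f})$ agree modulo $\theta(C_S(P))$. I expect the main technical obstacle to be verifying condition (2) of Lemma~\ref{fmt2} coherently across all members of the new $\FF$-conjugacy class $\pp$, together with the bookkeeping needed to guarantee that the bullet of $P \cdot C_S(P)$ behaves well enough to keep $\overline{P}$ inside $\hh_0$ at every stage of the induction.
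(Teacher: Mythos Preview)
Your proposal is essentially the paper's own argument: reduce to $\FF^{\bullet q}$ via the bullet functor, start the induction at $\hh_0 = \Ob(\FF^{\bullet c})$, and for each new fully normalized $P$ define $\Theta_P(f)$ via an extension of $f$ to $(P\cdot C_S(P))^{\bullet}$, using Lemma~\ref{app51} for well-definedness.

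The one place where your sketch diverges from the paper is the verification of condition~(b) of Lemma~\ref{fmt2}. Your idea of ``choosing lifts $\widetilde{f}$ that are themselves restrictions of lifts of the prescribed automorphism of $Q$'' does not quite work as stated, because there is no natural containment between $Q$ and $\overline{P}=(P\cdot C_S(P))^{\bullet}$. The paper instead observes that $Q\cdot C_S(P)\leq N_f$, extends $f$ to some $\gamma\in\Aut_{\FF}(Q\cdot C_S(P))$, and uses that $\Theta_P(f)=\Theta(\gamma^{\bullet})\cdot\theta(C_S(P))$ (by restricting $\gamma$ to $P\cdot C_S(P)$ and invoking the well-definedness already established). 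Then Lemma~\ref{app51} is applied a \emph{second} time, to $\gamma|_Q$ and $f'$, which agree on the quasicentric normal subgroup $P\lhd Q$; this yields $\gamma|_Q=c_g\circ f'$ with $g\in C_S(P)$, and the containment $\Theta_P(f)\supseteq\Theta((f')^{\bullet})$ follows from the mapping-triple axioms. You correctly anticipated that this step is the main technical obstacle; the missing ingredient is that Lemma~\ref{app51} is used twice, once for well-definedness and once for condition~(b).
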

\begin{proof}
By the properties of the functor $(-)^{\bullet}$, we can restrict the fusion mapping triple $(\Gamma, \theta, \Theta)$ to a fusion mapping triple $(\Gamma, \theta, \Theta^{\bullet})$ on $\FF^{\bullet c}$. Indeed, simply define $\Theta^{\bullet} = \Theta \circ \incl \colon \Mor(\FF_0^{\bullet}) \to \mathfrak{Sub}(\Gamma)$. Once we extend this fusion mapping triple to all of $\FF^{\bullet q}$ we can then extend it to $\FF^q$ using again the properties of the functor $(-)^{\bullet}$. Since there is no place for confusion we denote $\Theta^{\bullet}$ simply by $\Theta$.

Let $\hh_0 \subseteq \Ob(\FF^{\bullet q})$ be a set closed under $\FF$-conjugacy and over-groups (in $\FF^{\bullet q}$), and such that it contains $\Ob(\FF^{\bullet c})$, and let $\pp$ be a conjugacy class in $\FF^{\bullet q}$, maximal among those not in $\hh_0$. We want to extend $\Theta$ to $\hh = \hh_0 \cup \pp$.

Let $P \in \pp$ be fully $\FF$-normalized. For each $\alpha\in\Aut_{\FF}(P)$, there is an extension $\beta \in \Aut_{\FF}(R)$, where $R = P \cdot C_S(P)$, which in turn induces a unique $\beta^{\bullet} \in \Aut_{\FF}(R^{\bullet})$. Furthermore, by \cite[Proposition 2.7]{BLO3} both $R$ and $R^{\bullet}$ are $\FF$-centric (because $P$ is fully $\FF$-normalized), and in particular $R^{\bullet} \in \hh_0$. Thus we can define a map
\[
\Theta_P \colon \Aut_{\FF}(P) \longrightarrow \mathfrak{Sub}(N_{\Gamma}(\theta(C_S(P))))
\]
by the formula $\Theta_P(\alpha) = \Theta(\beta^{\bullet}) \cdot \theta(C_S(P))$. By properties (i) and (ii) of fusion mapping triples, $\Theta(\beta^{\bullet})$ is a left coset of $\theta(C_S(R))$ (because $Z(R) = Z(R^{\bullet})$ by \cite[Lemma 3.2 (d)]{BLO3}), and by (iv) it is also a right coset (where the left and right coset representatives can be chosen to be the same). Hence $\Theta_P(\alpha)$ is a left and right coset of $\theta(C_S(P))$.

If $\beta' \in \Aut_{\FF}(P)$ is any other extension of $\alpha$, then by Lemma~\ref{app51} there is some $g \in C_S(P)$ such that $\beta' = c_g \circ \beta$, and then $\Theta((\beta')^{\bullet}) = \Theta(c_g \, \beta^{\bullet}) = \theta(g) \Theta(\beta^{\bullet})$, and
\[
\begin{aligned}
\Theta((\beta')^{\bullet}) \cdot \theta(C_S(P)) & = \theta(g) \cdot \Theta(\beta^{\bullet}) \cdot \theta(C_S(P)) = \\
 & = \Theta(\beta^{\bullet}) \cdot \theta(\beta^{\bullet}(g)) \cdot \theta(C_S(P)) = \Theta(\beta^{\bullet}) \cdot \theta(C_S(P))
\end{aligned}
\]
and so the definition of $\Theta_P(\alpha)$ is independent of the choice of the extension of $\alpha$. This shows that $\Theta_P$ is well defined.

Note also that $\Theta_P$ respects compositions and, since $\Theta_P(\alpha) = x \cdot \theta(C_S(P)) = \theta(C_S(P)) \cdot x$ for some $x \in \Gamma$, we conclude that $x \in N_{\Gamma}(\theta(C_S(P))$. Thus $\Theta_P$ induces a homomorphism
\[
\Theta_P \colon \Aut_{\FF}(P) \longrightarrow N_{\Gamma}(\theta(C_S(P)))/\theta(C_S(P)).
\]
We can now apply Lemma~\ref{fmt2} to extend $\Theta$ to $\hh$.

If $\alpha \in \Aut_{\FF}(P)$ and $x \in \Theta_P(\alpha)$, then $x = y \cdot \theta(h)$ for some $h \in C_S(P)$ and $y \in \Theta(\beta^{\bullet})$, where $\beta^{\bullet}$ is some extension of $\alpha$ to $R = P \cdot C_S(P)$. Hence, for any $g \in P$,
\[
x  \theta(g)  x^{-1} = y \theta(hgh^{-1}) y^{-1} = y \theta(g) y^{-1} = \theta(\beta^{\bullet}(g)) = \theta(\alpha(g)).
\]
This shows that condition (i) in Lemma~\ref{fmt2} holds.

Assume now that $P \lneqq Q \leq N_S(P)$, and let $\alpha \in \Aut_{\FF}(P)$, $\beta \in \Aut_{\FF}(Q)$ be such that $\alpha = \beta|_P$. Then, in the notation of axiom (II) for saturated fusion systems, $Q \cdot C_S(P) \leq N_{\alpha}$, and hence $\alpha$ extends to some other $\gamma \in \Aut_{\FF}(Q \cdot C_S(P))$, and
\[
\Theta_P(\alpha) = \Theta(\gamma^{\bullet}) \cdot \theta(C_S(P))
\]
by definition of $\Theta_P$. By Lemma~\ref{app51}, $\gamma|_Q = c_g \circ \beta$ for some $g \in C_S(P)$, and hence by definition of fusion mapping triple, $\Theta(\gamma^{\bullet}) = \Theta(c_g \circ \beta^{\bullet}) = \theta(g) \cdot \Theta(\beta^{\bullet})$, and
\[
\begin{aligned}
\Theta_P(\alpha) & = \theta(g) \cdot \Theta(\beta^{\bullet}) \cdot \theta(C_S(P)) = \\
 & = \Theta(\beta^{\bullet}) \cdot \theta(\beta^{\bullet}(g)) \cdot \theta(C_S(P)) = \Theta(\beta^{\bullet}) \cdot \theta(C_S(P)).
\end{aligned}
\]
In particular, $\Theta_P(\alpha) \supseteq \Theta(\beta^{\bullet})$, and condition (ii) in Lemma~\ref{fmt2} also holds.
\end{proof}

Let $\hh \subseteq \FF^q$, and let $(\Gamma, \theta, \Theta)$ be a fusion mapping triple for $\FF_{\hh}$. For a subgroup $H \leq \Gamma$, let $\FF_H^{\ast} \subseteq \FF$ be the smallest \emph{restrictive} (in the sense of \cite[Definition B.6]{Gonza2}) subcategory which contains all $f \in \Mor(\FF^q)$ such that $\Theta(f) \cap H \neq \emptyset$. Let also $\FF_H \subseteq \FF_H^{\ast}$ be the full subcategory whose objects are the subgroups of $\theta^{-1}(H)$. The following result is a modification of the statement of \cite[Proposition B.8]{Gonza2} for groups of order prime to $p$ (the statement in \cite{Gonza2} dealed with $p$-groups). As the proof is exactly the same, we omit it.

\begin{proposition}\label{app3}
Let $(\Gamma, \theta, \Theta)$ be a fusion mapping triple on $\FF^q$, where $\Gamma$ is a finite group of order prime to $p$. Then the following holds for all $H \leq \Gamma$.
\begin{enumerate}[label=\textup{(\roman*)}]
\item $\FF_H$ is a saturated fusion system over $S_H = \theta^{-1}(H)$.
\item A subgroup $P \leq S_H$ is $\FF_H$-quasicentric if and only if it is $\FF$-quasicentric.
\end{enumerate}
\end{proposition}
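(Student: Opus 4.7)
First, I would exploit the simplification that $|\Gamma|$ is prime to $p$: since $S$ is a union of finite $p$-groups, the homomorphism $\theta \colon S \to \Gamma$ must be trivial, so $S_H = \theta^{-1}(H) = S$ for every $H \leq \Gamma$. Consequently $\theta(C_S(P)) = 1$ for all $P \leq S$, so each value $\Theta(f)$ in the fusion mapping triple collapses to a singleton, and $\Theta$ induces an honest group homomorphism $\widetilde{\Theta}_P \colon \Aut_{\FF^q}(P) \to \Gamma$ for every $P$. Then $\Aut_{\FF_H}(P) = \widetilde{\Theta}_P^{-1}(H)$ contains $\Aut_S(P)$ (whose image is simultaneously a $p$-group and a subgroup of the $p'$-group $\Gamma$) and has index prime to $p$ in $\Aut_{\FF^q}(P)$. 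The crucial identity $\Syl_p(\Aut_{\FF_H}(P)) = \Syl_p(\Aut_{\FF^q}(P))$ follows immediately.

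To prove (i), I would verify the three saturation axioms for $\FF_H$ over $S$. Axiom (III) is inherited from $\FF$: if $\varphi \colon P_\infty \to S$ has each $\varphi|_{P_n}$ in $\FF_H$, then $\varphi \in \FF$, and the equality $\Theta(\varphi|_{P_n}) = \Theta(\varphi)$ (forced by the collapse of cosets to singletons combined with the inclusion $\Theta(\varphi) \subseteq \Theta(\varphi|_{P_n})$) transfers membership in $H$. For axiom (II), given $\varphi \in \Hom_{\FF_H}(P,S)$ with $\varphi(P)$ fully $\FF_H$-centralized, I would extend $\varphi$ to $N_\varphi$ via axiom (II) for $\FF^q$ (Lemma~\ref{app5}); any such extension $\widetilde{\varphi}$ satisfies $\Theta(\widetilde{\varphi}) \subseteq \Theta(\varphi)$, and since both are singletons they coincide, so $\widetilde{\varphi}$ lies in $\FF_H$. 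For axiom (I), starting from $P$ fully $\FF_H$-normalized, I would produce a fully $\FF$-normalized $\FF_H$-conjugate $P'$ of $P$ by adjusting any $\FF$-iso $P \to P_0$ (with $P_0$ fully $\FF$-normalized) by a suitable element of the $p'$-quotient $\Aut_\FF(P_0) / \Aut_{\FF_H}(P_0)$, using that this quotient permutes the relevant $\FF$-representatives while preserving the $p$-power quantities $|N_S(\cdot)|$. Then axiom (I) for $\FF$ at $P'$, combined with the Sylow coincidence from the first paragraph, yields axiom (I) for $\FF_H$ at $P$.

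For (ii), if $P$ is $\FF$-quasicentric, I would fix $Q \in P^{\FF_H}$ fully $\FF_H$-centralized and pass, via an $\FF^q$-isomorphism, to a fully $\FF$-centralized $Q' \in Q^\FF$. The equality $C_\FF(Q') = \FF_{C_S(Q')}$ together with axiom (II) for $\FF_H$ just established and the Sylow identification descend to $C_{\FF_H}(Q) = \FF_{C_S(Q)}$. The converse runs by moving a fully $\FF$-centralized representative into a fully $\FF_H$-centralized one and applying the same bookkeeping in reverse, using that centralizer fusion systems in $\FF$ and $\FF_H$ differ only by a $p'$-quotient of automorphism groups. I expect the main technical obstacle to lie precisely in this bookkeeping: switching cleanly between fully $\FF$- and fully $\FF_H$-normalized (resp.\ centralized) representatives while tracking the behaviour of automorphism groups under the $p'$-quotient. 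The entire argument is a direct adaptation of the proof of \cite[Proposition B.8]{Gonza2}, with prime-to-$p$ index arguments replacing $p$-power index ones at each step, which works seamlessly thanks to the collapse $\theta \equiv 1$ that removes all coset ambiguity from $\Theta$.
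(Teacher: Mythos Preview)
Your proposal is correct and aligns with the paper's approach: the paper omits the proof entirely, stating only that it is identical to that of \cite[Proposition~B.8]{Gonza2} with the obvious modification for $p'$-groups, and your sketch is precisely a fleshed-out version of that adaptation, resting on the key simplification $\theta\equiv 1$ that you identify at the outset. One small slip: your parenthetical reference to Lemma~\ref{app5} for axiom~(II) is misplaced (that lemma extends fusion mapping triples, not morphisms), but the intended appeal to axiom~(II) of $\FF$ is clear and correct.
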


When $\Gamma$ is a group of order prime to $p$, there is only one possible morphism from a discrete $p$-toral group $S$ to $\Gamma$, the trivial one. The  existence of a fusion mapping triple in this case is equivalent to the existence of a functor $\widehat{\Theta} \colon \FF^q \to \bb(\Gamma)$ such that $\Theta(f) = \{\widehat{\Theta}(f)\}$ for each $f \in \Mor(\FF^q)$. This equivalent approach will be useful later on when constructing fusion mapping triples.

Given a (possibly infinite) group $G$, recall that $O^{p'}(G)$ is the intersection of all normal subgroups $K \lhd G$ such that $|G/K|$ is finite and prime to $p$.

\begin{lemma}\label{opprime}
Let $G, H$ be groups, and let $f \colon G \to H$ be an epimorphism with $\Ker(f) \leq O^{p'}(G)$. Then, $f$ induces an isomorphism $\overline{f} \colon G/O^{p'}(G) \stackrel{\cong} \longrightarrow H/O^{p'}(H)$.
\end{lemma}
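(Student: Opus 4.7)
The plan is to establish a bijective, inclusion-preserving correspondence between the normal subgroups of $G$ of finite index prime to $p$ and the normal subgroups of $H$ of finite index prime to $p$, induced by $f$ and its inverse image. Surjectivity of $f$ immediately gives that for each $K \lhd H$ of finite index prime to $p$, $f^{-1}(K) \lhd G$ satisfies $G/f^{-1}(K) \cong H/K$, so $f^{-1}(K)$ has finite index prime to $p$ in $G$; taking intersection over all such $K$ yields $f^{-1}(O^{p'}(H)) \supseteq O^{p'}(G)$, equivalently $f(O^{p'}(G)) \leq O^{p'}(H)$, so the quotient map $\bar f\colon G/O^{p'}(G) \to H/O^{p'}(H)$ is well defined (and surjective, since $f$ is).

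Conversely, let $N \lhd G$ have finite index prime to $p$. By the definition of $O^{p'}(G)$ and the standing hypothesis, $\Ker(f) \leq O^{p'}(G) \leq N$, so $N = f^{-1}(f(N))$, $f(N) \lhd H$, and $H/f(N) \cong G/N$ has finite order prime to $p$. Therefore $N \mapsto f(N)$ and $K \mapsto f^{-1}(K)$ are mutually inverse, inclusion-preserving bijections between the two posets of such subgroups. Because each $N$ in the first poset contains $\Ker(f)$, the image of an intersection is the intersection of images, so
\[
f(O^{p'}(G)) = f\Bigl(\bigcap_N N\Bigr) = \bigcap_N f(N) = \bigcap_K K = O^{p'}(H).
\]

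It remains to check injectivity of $\bar f$. If $g \in G$ satisfies $f(g) \in O^{p'}(H) = f(O^{p'}(G))$, then $g = h \cdot k$ for some $h \in O^{p'}(G)$ and $k \in \Ker(f) \leq O^{p'}(G)$, hence $g \in O^{p'}(G)$. Thus $\bar f$ is injective, and together with the surjectivity already noted, it is the desired isomorphism. The only delicate point is justifying the identity $f(\bigcap_N N) = \bigcap_N f(N)$; this is exactly where the hypothesis $\Ker(f) \leq O^{p'}(G)$ enters, since it forces every relevant $N$ to contain $\Ker(f)$ and thereby makes $f$ behave as an order-isomorphism on the two posets of finite-index-prime-to-$p$ normal subgroups.
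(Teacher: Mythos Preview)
Your proof is correct and follows essentially the same approach as the paper's: both set up the bijective correspondence (via $f$ and $f^{-1}$) between normal subgroups of finite index prime to $p$ in $G$ and in $H$, deduce $f(O^{p'}(G)) = O^{p'}(H)$, and conclude that $\overline{f}$ is an isomorphism. Your version is a bit more explicit, particularly in justifying why the image of the intersection equals the intersection of the images, which the paper leaves implicit.
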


\begin{proof}
Let $K \lhd G$ be such that $|G/K|$ is finite and prime to $p$, and note that $\Ker(f) \leq O^{p'}(G) \leq K$ by assumption. Hence, $f$ induces an isomorphism $G/K \cong H/f(K)$. Conversely, let $N \lhd H$ be such that $|H/N|$ is finite and prime to $p$, and let $K \leq G$ be the preimage of $N$ through $f$, so $\Ker(f) \leq K$. Again, $f$ induces an isomorphism $G/K \cong H/N$. It follows that $f(O^{p'}(G)) = O^{p'}(H)$, and there is a commutative diagram of extensions
\[
\xymatrix{
O^{p'}(G) \ar[r] \ar[d]_f & G \ar[r] \ar[d]^f & G/O^{p'}(G) \ar[d]^{\overline{f}}\\
O^{p'}(H) \ar[r] & H \ar[r] & H/O^{p'}(H)
}
\]
where all the vertical maps are epimorphisms. As $\Ker(f) \leq O^{p'}(G)$, it then follows that $\overline{f}$ must be an isomorphism.
\end{proof}

Although \cite[Proposition 2.6]{BCGLO2} was originally proved for $p$-local finite groups, a careful inspection of its proof shows that it applies without modification in the compact case. Thus, combining \cite[Proposition 2.6]{BCGLO2} with Lemma~\ref{opprime} we deduce the existence of an isomorphism
\begin{equation}\label{Gamma}
\Gamma_{p'}(\FF) \definicio \pi_1(|\LL|)/O^{p'}(\pi_1(|\LL|)) \cong \pi_1(|\FF^c|)/O^{p'}(\pi_1(|\FF^c|)).
\end{equation}
We shall show that $\Gamma_{p'}(\FF)$ is a finite group of order prime to $p$, and that the natural functor
\[
\varepsilon \colon \FF^c \longrightarrow \bb (\Gamma_{p'}(\FF))
\]
induces a bijective correspondence between subgroups of $\Gamma_{p'}(\FF)$ and fusion subsystems of $\FF$ of index prime to $p$.

\begin{definition}\label{out0}
We denote by $O^{p'}_{\ast}(\FF) \subseteq \FF$ the smallest fusion subsystem over $S$ (not necessarily saturated) whose morphism set contains $O^{p'}(\Aut_{\FF}(P))$ for all $P \leq S$. Furthermore, we define $\Out^0_{\FF}(S) \leq \Out_{\FF}(S)$ as the subgroup generated by the elements $[f] \in \Out_{\FF}(S)$ such that $f|_P \in \Mor_{O^{p'}_{\ast}(\FF)}(P, S)$ for some $P \in \Ob(\FF^c)$ and $f \in \Aut_{\FF}(S)$ representing $[f]$.
\end{definition}

\begin{lemma}\label{app1}
The following holds.
\begin{enumerate}[label=\textup{(\roman*)}]
\item $O^{p'}_{\ast}(\FF)$ is normalized by $\Aut_{\FF}(S)$: for all $f \in \Mor(O^{p'}_{\ast}(\FF))$ and all $\gamma \in \Aut_{\FF}(S)$, we have
\[
\gamma \circ f \circ \gamma^{-1} \in \Mor(O^{p'}_{\ast}(\FF));
\]
\item the fusion system $\FF$ is generated by $O^{p'}_{\ast}(\FF)$ together with $\Aut_{\FF}(S)$ and
\item $\Out^0_{\FF}(S)$ is a normal subgroup of $\Out_{\FF}(S)$.
\end{enumerate}
\end{lemma}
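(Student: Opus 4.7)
The plan is to prove (i) first by direct manipulation, then obtain (iii) as an immediate corollary, and finally establish (ii) via Alperin's fusion theorem combined with iterative use of the extension axiom.

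For (i), given $\gamma \in \Aut_\FF(S)$ and $P \leq S$, the conjugation map $\alpha \mapsto \gamma \alpha \gamma^{-1}$ is a group isomorphism $\Aut_\FF(P) \to \Aut_\FF(\gamma(P))$ and therefore sends the characteristic subgroup $O^{p'}(\Aut_\FF(P))$ onto $O^{p'}(\Aut_\FF(\gamma(P)))$. Since $\Mor(O^{p'}_{\ast}(\FF))$ consists precisely of compositions of restrictions of elements of the various $O^{p'}(\Aut_\FF(P))$, and conjugation by $\gamma$ commutes with both restriction and composition, the morphism set is preserved. Part (iii) then follows at once: if $f \in \Aut_\FF(S)$ represents a generator $[f] \in \Out^0_\FF(S)$ with $f|_P \in \Mor(O^{p'}_{\ast}(\FF))$ for some $P \in \Ob(\FF^c)$ and if $\gamma \in \Aut_\FF(S)$, the restriction $(\gamma f \gamma^{-1})|_{\gamma(P)}$ equals $\gamma \circ f|_P \circ \gamma^{-1}$, which lies in $\Mor(O^{p'}_{\ast}(\FF))$ by (i), while $\gamma(P)$ remains $\FF$-centric. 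Hence $[\gamma f \gamma^{-1}] \in \Out^0_\FF(S)$, so $\Out^0_\FF(S)$ is closed under conjugation by $\Out_\FF(S)$.

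For (ii), let $\FF^0 \subseteq \FF$ denote the fusion subsystem over $S$ generated by $O^{p'}_{\ast}(\FF)$ together with $\Aut_\FF(S)$. By Alperin's fusion theorem for $p$-local compact groups (\cite[Theorem 3.6]{BLO3}), it suffices to show that $\Aut_{\FF^0}(P) = \Aut_\FF(P)$ for every fully $\FF$-normalized $\FF$-centric $\FF$-radical subgroup $P$. By axiom (I), $\Aut_S(P) \in \Syl_p(\Aut_\FF(P))$, and since the quotient $\Aut_\FF(P)/O^{p'}(\Aut_\FF(P))$ is a $p'$-group, we have $\Aut_S(P) \leq O^{p'}(\Aut_\FF(P)) \leq \Aut_{\FF^0}(P)$. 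A Frattini argument then yields
\[
\Aut_\FF(P) = O^{p'}(\Aut_\FF(P)) \cdot N_{\Aut_\FF(P)}(\Aut_S(P)),
\]
reducing us to showing that each $\alpha$ in the normalizer factor is the restriction of an element of $\Aut_\FF(S)$. For such $\alpha$ one checks that $N_\alpha = N_S(P)$ in the sense of axiom (II); since $P$ is fully centralized, this axiom extends $\alpha$ to some $\bar{\alpha} \in \Aut_\FF(N_S(P))$. Iterating along the normalizer chain $P < N_S(P) < N_S(N_S(P)) < \cdots$, which terminates at $S$ by the normalizer-growth property of discrete $p$-toral groups, produces $\widetilde{\alpha} \in \Aut_\FF(S)$ with $\widetilde{\alpha}|_P = \alpha$, so that $\alpha \in \Aut_{\FF^0}(P)$.

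The delicate point is the iterative extension in (ii): at each stage $P_k = N_S^k(P)$ one must verify that the new extension $\bar{\alpha}_k$ continues to normalize the ambient $\Aut_S(P_k)$ so that axiom (II) can be invoked at the next level, and one must handle intermediate terms of the chain that fail to be fully $\FF$-normalized by first transporting via a suitable $\FF$-morphism to a fully normalized conjugate (as in the replacement step carried out in the proof of Proposition~\ref{1.1LO}). This is the natural compact analogue of the finite-group argument developed in \cite{BCGLO2}, and all the requisite ingredients (normalizer growth, existence of fully normalized conjugates, Sylow behavior inside $\Aut_\FF(-)$) are standard for saturated fusion systems over discrete $p$-toral groups.
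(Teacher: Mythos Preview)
Your approach is essentially the same as the paper's (which simply cites \cite[Lemma~3.4]{BCGLO2} and asserts that the finite argument carries over verbatim), and your treatment of (i) and (iii) is clean and correct.

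For (ii) there is one imprecision worth flagging. You write that at each stage one must ``verify that the new extension $\bar{\alpha}_k$ continues to normalize the ambient $\Aut_S(P_k)$'', but this is not something that can be verified: the extension produced by axiom~(II) has no reason to normalize $\Aut_S(N_S(P))$. The correct move---and the one implicit in \cite{BCGLO2}---is either to \emph{re-apply} the Frattini decomposition to $\bar{\alpha}$ at the next level (factoring off an $O^{p'}$-piece, which is already in $\FF^0$, and extending only the normalizing factor), or, more cleanly, to replace the normalizer-chain iteration by a downward induction over the \emph{finite} set of orders of $\FF$-centric $\FF$-radical subgroups (finite by \cite[Corollary~3.5]{BLO3}): once $\alpha$ extends to $\bar{\alpha}\in\Aut_\FF(N_S(P))$, Alperin's theorem decomposes $\bar{\alpha}$ through centric radicals of strictly larger order, and induction finishes. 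This second formulation also sidesteps any worry about termination of the normalizer chain in the discrete $p$-toral setting, where ascending chains of subgroups need not stabilize.
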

\begin{proof}
Parts (i) and (ii) follow from \cite[Lemma 3.4]{BCGLO2}, since the same proof applies here without modification (all the properties required have the necessary counterpart for fusion systems over discrete $p$-toral groups). Part (iii) follows then from part (i).
\end{proof}

\begin{proposition}\label{app2}
There is a unique functor $\widehat{\theta} \colon \FF^c \to \bb(\Out_{\FF}(S)/\Out^0_{\FF}(S))$ with the following properties:
\begin{enumerate}[label=\textup{(\roman*)}]
\item $\widehat{\theta}(f) = [f]$ for all $f \in \Aut_{\FF}(S)$.
\item $\widehat{\theta}(f) = [\Id]$ if $f \in \Mor(O^{p'}_{\ast}(\FF)^c)$. In particular, $\widehat{\theta}$ sends inclusion morphisms to the identity.
\end{enumerate}
Furthermore, there is an isomorphism $\overline{\theta} \colon \Gamma_{p'}(\FF) \stackrel{\cong} \longrightarrow \Out_{\FF}(S)/\Out^0_{\FF}(S)$ such that $\widehat{\theta} = \bb(\overline{\theta}) \circ \varepsilon$. In particular, $\Gamma_{p'}(\FF)$ is a finite group of order prime to $p$.
\end{proposition}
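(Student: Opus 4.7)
Set $\Gamma := \Out_\FF(S)/\Out^0_\FF(S)$. I first check that $\Gamma$ is a finite group of order prime to $p$: since $S \lhd S$ we have $\Aut_S(S) = \Inn(S)$, so $\Out_S(S) = 1$, and by the Sylow axiom in the saturation of $\FF$, $\Out_S(S) \in \Syl_p(\Out_\FF(S))$. Hence $\Out_\FF(S)$ is a finite $p'$-group and so is its quotient $\Gamma$. Uniqueness of $\widehat{\theta}$ is then automatic: Lemma~\ref{app1}(ii) says that the morphisms in $\Aut_\FF(S)$ together with those in $O^{p'}_{\ast}(\FF)^c$ generate $\Mor(\FF^c)$, and properties (i), (ii) of the statement already specify $\widehat{\theta}$ on this generating set.

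For existence, the plan is to produce a normal form for morphisms in $\FF^c$. Given $\varphi \in \Hom_{\FF^c}(P, Q)$, Lemma~\ref{app1}(ii) yields a factorization
\[
\varphi = \varphi_n \circ \varphi_{n-1} \circ \cdots \circ \varphi_1,
\]
in which each $\varphi_i$ is a restriction of either some $f_i \in \Aut_\FF(S)$ or some morphism $g_i$ of $O^{p'}_{\ast}(\FF)$. By Lemma~\ref{app1}(i), for any such $f$ and $g$ one has a commutation relation $f \circ g = g' \circ f$, where $g' := f \circ g \circ f^{-1} \in \Mor(O^{p'}_{\ast}(\FF))$. Iterating, I collect all the $\Aut_\FF(S)$-factors on the right to obtain
\[
\varphi = h|_{F(P)} \circ F|_P, \qquad F \in \Aut_\FF(S),\ h \in \Mor(O^{p'}_{\ast}(\FF)^c),
\]
and I set $\widehat{\theta}(\varphi) := [F] \in \Gamma$. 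The main obstacle is well-definedness: if $\varphi = h \circ F|_P = h' \circ F'|_P$, then $F' \circ F^{-1}$ restricted to $F(P) \in \FF^c$ equals $(h')^{-1} \circ h \in \Mor(O^{p'}_{\ast}(\FF)^c)$, so by the very definition of $\Out^0_\FF(S)$ the class $[F' \circ F^{-1}]$ lies in $\Out^0_\FF(S)$, and $[F] = [F']$ in $\Gamma$. Functoriality and properties (i)--(ii) follow from the construction.

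For the isomorphism, the functor $\widehat{\theta}$ induces on geometric realizations a map $|\FF^c| \to B\Gamma$, and hence a group homomorphism $\pi_1(|\FF^c|) \to \Gamma$. Since $\Gamma$ is a finite $p'$-group, this factors through $\Gamma_{p'}(\FF) = \pi_1(|\FF^c|)/O^{p'}(\pi_1(|\FF^c|))$, giving $\overline{\theta}$, with the equality $\widehat{\theta} = \bb(\overline{\theta}) \circ \varepsilon$ holding by construction. Surjectivity of $\overline{\theta}$ is clear, since its image already contains the image of $\Aut_\FF(S) \to \Gamma$, which is everything. For injectivity I construct an inverse: the inclusion $\Aut_\FF(S) \hookrightarrow \pi_1(|\FF^c|, S)$ composed with the quotient onto $\Gamma_{p'}(\FF)$ kills $\Inn(S)$ (inner automorphisms trivialize via the linking system, using~\eqref{Gamma}), hence yields $\Out_\FF(S) \to \Gamma_{p'}(\FF)$. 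Each generator $[f] \in \Out^0_\FF(S)$ is represented by an $f \in \Aut_\FF(S)$ whose restriction to some $P \in \FF^c$ lies in $O^{p'}_{\ast}(\FF)^c$; the corresponding loop at $S$ is homotopic, via the restriction $1$-cell $P \hookrightarrow S$, to a loop at $P$ built from morphisms in $O^{p'}(\Aut_\FF(Q))$ for various $Q \leq P$, all of which vanish in the maximal $p'$-quotient $\Gamma_{p'}(\FF)$. Hence $[f]$ vanishes in $\Gamma_{p'}(\FF)$, and the induced map $\Gamma \to \Gamma_{p'}(\FF)$ is a two-sided inverse of $\overline{\theta}$. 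The finiteness and $p'$-order statements for $\Gamma_{p'}(\FF)$ are then inherited from $\Gamma$.
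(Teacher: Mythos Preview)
Your argument follows the paper's proof essentially verbatim: factor $\varphi = h \circ F|_P$ using Lemma~\ref{app1}, check well-definedness via the definition of $\Out^0_\FF(S)$, and then build $\overline{\theta}$ together with an inverse $\Gamma \to \Gamma_{p'}(\FF)$ coming from the inclusion $B\Aut_\FF(S)\hookrightarrow |\FF^c|$. Two small points to tighten. First, the reason $\Inn(S)$ dies in $\Gamma_{p'}(\FF)$ is not the linking system per se: it is simply that every element of the discrete $p$-toral group $S$ has $p$-power order, so $\Inn(S)$ lands in $O^{p'}(\pi_1(|\FF^c|))$. Second, to conclude that your map $\Gamma \to \Gamma_{p'}(\FF)$ is a \emph{two}-sided inverse you need it to be surjective; this is exactly the paper's observation that $\FF^c$ is generated by $\Aut_\FF(S)$ together with $O^{p'}_\ast(\FF)$ (Lemma~\ref{app1}(ii)), and that morphisms of the latter are composites of restrictions of $p$-power order automorphisms and hence already vanish in $\Gamma_{p'}(\FF)$.
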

\begin{proof}
By Lemma~\ref{app1} (ii), there exist an automorphism $\alpha \in \Aut_{\FF}(S)$ and a morphism $f' \in \Hom_{O^{p'}_{\ast}(\FF)^c}(\alpha(P), Q)$ such that $f
= f' \circ \alpha|_P$. Thus, if we have two such decompositions $f = f'_1 \circ (\alpha_1)|_P = f_2' \circ (\alpha_2)|_P$, then (after factoring out inclusions)
we have
\[
(\alpha_2 \circ \alpha_1^{-1})|_P = (f'_2)^{-1} \circ f_1 \in \Iso_{O^{p'}_{\ast}(\FF)^c}(\alpha_1(P), \alpha_2(P)),
\]
which implies that $\alpha_2 \circ \alpha_1^{-1} \in \Out^0_{\FF}(S)$, and we can define
\[
\widehat{\theta}(f) = [\alpha_1] = [\alpha_2] \in \Out_{\FF}(S)/\Out^0_{\FF}(S).
\]
This also proves that $\widehat{\theta}$ is well defined on morphisms and maps all objects in $\FF^c$ to the unique object of $\bb(\Out_{\FF}(S)/\Out^0_{\FF}(S))$. By Lemma~\ref{app1} (ii) again, this functor preserves compositions, and thus is well defined. Furthermore, it satisfies conditions (i) and (ii) above by construction. The uniqueness of $\widehat{\theta}$ is clear.

Let us prove then the last part of the statement. Since $\Out_{\FF}(S)/\Out^0_{\FF}(S)$ is a finite $p'$-group, the morphism $\pi_1(|\widehat{\theta}|)$ factors through a homomorphism
\[
\overline{\theta} \colon \pi_1(|\FF^c|)/ O^{p'}(\pi_1(|\FF^c|)) \longrightarrow \Out_{\FF}(S)/\Out^0_{\FF}(S),
\]
and the inclusion of $B\Aut_{\FF}(S)$ into $|\FF^c|$ (as a subcomplex with a single vertex $S$) induces then a homomorphism
\[
\tau \colon \Out_{\FF}(S) \longrightarrow \pi_1(|\FF^c|)/ O^{p'}(\pi_1(|\FF^c|)).
\]
Furthermore, $\tau$ is an epimorphism since $\FF$ is generated by $O^{p'}_{\ast}(\FF)$ and $\Aut_{\FF}(S)$, by Lemma~\ref{app1} (ii), and because every automorphism on $O^{p'}_{\ast}(\FF)$ is a composite of restrictions of automorphisms of $p$-power order.

By part (i), the composite $\overline{\theta} \circ \tau$ is the projection of $\Out_{\FF}(S)$ onto the quotient $ \Out_{\FF}(S)/\Out^0_{\FF}(S)$, 
and $\Out^0_{\FF}(S) \leq \Ker(\tau)$ by definition of $\Out_{\FF}^0(S)$. Thus $\overline{\theta}$ is an isomorphism. As $\Out_{\FF}(S)$ is a finite group of order prime to $p$, the last part of the statement follows.
\end{proof}

\begin{theorem}\label{app4}
There is a bijective correspondence between the set of subgroups $H \leq \Gamma_{p'}(\FF) = \Out_{\FF}(S)/\Out^0_{\FF}(S)$ and the set of saturated fusion subsystems $\FF_H \subseteq \FF$ of index prime to $p$. The correspondence is given by associating to $H$ the fusion system generated by $\big(\widehat{\theta}\big)^{-1}(\bb(H))$, where $\widehat{\theta}$ is the functor in Proposition~\ref{app2}.
\end{theorem}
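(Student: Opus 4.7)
My plan is to combine the fusion mapping triple machinery of Lemma~\ref{app5} and Proposition~\ref{app3} with the functor $\widehat\theta$ of Proposition~\ref{app2}. Because $\Gamma_{p'}(\FF)$ is a finite group of order prime to $p$ and $S$ is discrete $p$-toral, the only group homomorphism $\theta\colon S\to\Gamma_{p'}(\FF)$ is trivial. Accordingly, $\widehat\theta$ determines a fusion mapping triple $(\Gamma_{p'}(\FF),\theta,\Theta)$ on $\FF^c$ via $\Theta(f)=\{\widehat\theta(f)\}$, and Lemma~\ref{app5} extends it uniquely to a fusion mapping triple on $\FF^q$, which I still denote $(\Gamma_{p'}(\FF),\theta,\Theta)$.

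For each subgroup $H\leq\Gamma_{p'}(\FF)$, Proposition~\ref{app3} then yields a saturated fusion subsystem $\FF_H$ over $\theta^{-1}(H)=S$. By construction $\FF_H$ is the smallest restrictive subcategory of $\FF^q$ containing every morphism $f$ with $\widehat\theta(f)\in H$, which matches the characterization in the statement. Proposition~\ref{app2}~(ii) gives $\widehat\theta(f)=[\Id]\in H$ for every $f\in\Mor(O^{p'}_\ast(\FF)^q)$; hence $\Aut_{\FF_H}(P)\geq O^{p'}(\Aut_\FF(P))$ for every $P\leq S$, so $\FF_H$ has index prime to $p$ in the sense of Definition~\ref{defifoca}. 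Injectivity of $H\mapsto\FF_H$ is then immediate: one recovers $H$ as $\widehat\theta(\Aut_{\FF_H}(S))$, since by Proposition~\ref{app2} the kernel of the quotient $\Out_\FF(S)\to\Gamma_{p'}(\FF)$ is exactly $\Out^0_\FF(S)$ (Definition~\ref{out0}).

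For surjectivity, given any saturated subsystem $\FF'\subseteq\FF$ of index prime to $p$, I would set $H'=\widehat\theta(\Aut_{\FF'}(S))\leq\Gamma_{p'}(\FF)$ and argue $\FF'=\FF_{H'}$. The inclusion $\FF'\subseteq\FF_{H'}$ follows from Lemma~\ref{app1}~(ii) applied to $\FF'$: every morphism in $\FF'$ decomposes as a composite of restrictions of elements of $\Aut_{\FF'}(S)$ and of $O^{p'}_\ast(\FF')\subseteq O^{p'}_\ast(\FF)$, and each of these is sent by $\widehat\theta$ into $\bb(H')$. The reverse inclusion $\FF_{H'}\subseteq\FF'$ reduces, via the compact version of Alperin's fusion theorem \cite[Theorem 3.6]{BLO3} together with the properties of the bullet functor $\functor$, to checking $\Aut_{\FF_{H'}}(P)\subseteq\Aut_{\FF'}(P)$ for each fully $\FF$-normalized $\FF$-centric radical subgroup $P$; but $\Aut_{\FF_{H'}}(P)$ is generated by $O^{p'}(\Aut_\FF(P))$ (already in $\FF'$ by the hypothesis of index prime to $p$) and by restrictions of elements of $\Aut_\FF(S)$ whose class lies in $H'$, which by the definition of $H'$ lift to elements of $\Aut_{\FF'}(S)$.

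The main obstacle will be establishing this reverse inclusion $\FF_{H'}\subseteq\FF'$ rigorously. The subtlety is that $\FF_{H'}$ is defined abstractly as a restrictive subcategory of $\FF^q$, whereas the control one has over morphisms in $\FF'$ is only via its saturation axioms plus the bound $\Aut_{\FF'}(P)\geq O^{p'}(\Aut_\FF(P))$. The key leverage is the uniqueness clause in Lemma~\ref{app5}: it forces any fusion mapping triple on $\FF^q$ to be determined by its restriction to $\FF^{\bullet c}$, so no morphism can enter $\FF_{H'}$ beyond those already traceable back to $\Aut_\FF(S)$-classes in $H'$ and to $O^{p'}_\ast(\FF)$, both of which are present in $\FF'$.
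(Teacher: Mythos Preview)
Your overall architecture matches the paper's: build the fusion mapping triple from $\widehat\theta$, extend to $\FF^q$ via Lemma~\ref{app5}, and invoke Proposition~\ref{app3} to get saturated subsystems $\FF_H$ of index prime to $p$. The injectivity argument and the inclusion $\FF'\subseteq\FF_{H'}$ are fine.

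The genuine gap is in the reverse inclusion $\FF_{H'}\subseteq\FF'$. You assert that an element $\alpha\in\Aut_\FF(S)$ with $\widehat\theta(\alpha)\in H'$ ``lifts to an element of $\Aut_{\FF'}(S)$.'' But $\widehat\theta(\alpha)\in H'$ only says that $[\alpha]\equiv[\beta]$ modulo $\Out^0_\FF(S)$ for some $\beta\in\Aut_{\FF'}(S)$; to conclude $\alpha\in\Aut_{\FF'}(S)$ you need $\Out^0_\FF(S)\leq\Out_{\FF'}(S)$, and that is \emph{not} immediate from the index-prime-to-$p$ hypothesis. A generator of $\Out^0_\FF(S)$ is the class of some $\gamma\in\Aut_\FF(S)$ with $\gamma|_Q\in O^{p'}_\ast(\FF)$ for \emph{some} centric $Q$, and while $\gamma|_Q\in\FF'$ follows, it does not automatically follow that $\gamma$ itself lies in $\Aut_{\FF'}(S)$. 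Your appeal to the uniqueness clause of Lemma~\ref{app5} does not address this: that uniqueness controls the fusion mapping triple, not the membership of morphisms in the a~priori unrelated subsystem $\FF'$.

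The paper closes exactly this gap with a direct ``climb through normalizers'' argument: given $\alpha\in\Aut_\FF(S)$ with $\alpha|_P\in\FF'$ for some $\FF$-centric $P\lneqq S$, one uses that $\alpha(P)$ is $\FF'$-centric (hence fully $\FF'$-centralized) together with axiom~(II) for $\FF'$ to extend $\alpha|_P$ to some $\alpha_1\in\Hom_{\FF'}(N_S(P),S)$; then \cite[Proposition~2.8]{BLO3} forces $\alpha_1=\alpha|_{N_S(P)}\circ c_g$ for some $g\in Z(P)$, so $\alpha|_{N_S(P)}\in\FF'$. Iterating (using $P\lneqq N_S(P)$) yields $\alpha\in\Aut_{\FF'}(S)$. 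This single lemma is what makes the bijection go through, and it is missing from your sketch.
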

\begin{proof}
Let $\FF_0 \subseteq \FF$ be a saturated subsystem of index prime to $p$. That is, $\FF_0$ is a saturated subsystem over $S$ which contains $O^{p'}_{\ast}(\FF)$. Then $\Out_{\FF}^0(S) \lhd \Out_{\FF_0}(S)$, and we can set
\[
H = \Out_{\FF_0}(S) / \Out^0_{\FF}(S) \leq \Gamma_{p'}(\FF).
\]
We have to show that this provides the bijection in the statement. We first show that a morphism $f \in \Mor(\FF^c)$ is in $\FF_0$ if and only if $\widehat{\theta}(f) \in H$, which in turn implies that
\[
\FF_0 = \big(\widehat{\theta}\big)^{-1}(\bb(H)).
\]
Clearly it is enough to prove this for isomorphisms in $\FF^c$.

Let $P, Q \leq S$ be $\FF$-centric, $\FF$-conjugate subgroups, and fix $f \in \Iso_{\FF}(P,Q)$. By Lemma~\ref{app1} we can write $f = f' \circ \alpha|_P$, where $\alpha \in \Aut_{\FF}(S)$ and $f' \in \Iso_{O^{p'}_{\ast}(\FF)}(P,Q)$. Then $f$ is in $\FF_0$ if and only if $\alpha|_P$ is in $\FF_0$. Also, by definition of $\widehat{\theta}$ (and also $h$), $\widehat{\theta}(f) \in H$ if and only if $\alpha \in \Aut_{\FF_0}(S)$. Thus we have to show that $\alpha|_P \in \Mor(\FF_0)$ if and only if $\alpha \in \Aut_{\FF_0}(S)$.

The case when $\alpha \in \Aut_{\FF_0}(S)$ is clear, so let us prove the converse. Note that $\alpha(P)$ is $\FF_0$-centric, and hence fully $\FF_0$-centralized. Since $\alpha|_P$ extends to an (abstract) automorphism of $S$, axiom (II) implies that it extends to some $\alpha_1 \in \Hom_{\FF_0}(N_S(P), S)$. By \cite[Proposition 2.8]{BLO3},
\[
\alpha_1 = (\alpha|_{N_S(P)}) \circ c_g
\]
for some $g \in Z(P)$, and hence $\alpha|_{N_S(P)} \in \Hom_{\FF_0}(N_S(P), S)$. Furthermore, $P \lneqq N_S(P)$ since $P \lneqq S$ by hypothesis. Applying this process repeatedly it follows that $\alpha \in \Aut_{\FF_0}(S)$.

Now, fix a subgroup $H \leq \Gamma_{p'}(\FF)$, and let $\FF_H$ be the smallest fusion system over $S$ which contains $\big(\widehat{\theta}\big)^{-1}(\bb(H))$. We show then that $\FF_H$ is a saturated fusion subsystem of $\FF$ of index prime to $p$. Let $P, Q \leq S$ be $\FF$-centric subgroups, and note that $\Hom_{\FF_H}(P,Q)$ is the set of all morphisms $f \in \Hom_{\FF}(P,Q)$ such that $\widehat{\theta}(f) \in H$. Thus, in particular $\FF_H$ contains $O^{p'}_{\ast}(\FF)$ because all morphisms in $O^{p'}_{\ast}(\FF)$ are sent by $\widehat{\theta}$ to the identity.

Define then a map $\Theta \colon \Mor(\FF^c) \to \mathfrak{Sub}(\Gamma_{p'}(\FF))$ by setting $\Theta(f) = \{\widehat{\theta}(f)\}$, that is, each image is a singleton. Let also $\theta \in \Hom(S, \Gamma_{p'}(\FF))$ be the trivial homomorphism. Then it follows that $(\Gamma_{p'}(\FF), \theta, \Theta)$ is a fusion mapping triple of $\FF^c$ which, by Lemma~\ref{app5} extends to a unique fusion mapping triple of $\FF^q$. Thus $\FF_H$ is saturated by Proposition~\ref{app3}.

By Alperin's Fusion Theorem, \cite[Theorem 3.6]{BLO3}, $\FF_H$ is the unique saturated fusion subsystem of $\FF$ with the property that a morphism $f \in \Hom_{\FF}(P,Q)$ between $\FF$-centric subgroups of $S$ lies in $\FF_H$ if and only if $\widehat{\theta}(f) \in H$. This shows that the correspondence is bijective.
\end{proof}
\begin{remark}\label{rem_minnor}
There is a minimal fusion subsystem $O^{p'}(\FF) \subseteq \FF$ of index prime to $p$, corresponding to the trivial subgroup $\{1\} \leq \Gamma_{p'}(\FF)$. By \cite[Theorem B]{LeviLibman}, $O^{p'}(\FF)$ has a unique associated centric linking system $O^{p'}(\LL)$ (up to isomorphism), and thus there is a $p$-local compact group $O^{p'}(\g) = (S, O^{p'}(\FF), O^{p'}(\LL))$.
\end{remark}
\begin{corollary}\label{minnor}
$O^{p'}(\FF)$ is a normal subsystem of $\FF$.
\end{corollary}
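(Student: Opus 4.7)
The plan is to verify the four conditions of Definition~\ref{definormal} for $O^{p'}(\FF) \subseteq \FF$, where the ambient Sylow equals $S$. Three of the conditions are essentially automatic: (N1) is trivial since $S' = S$; (N3) is exactly the saturation of $O^{p'}(\FF)$, which is part of Theorem~\ref{app4}; and (N4) reduces to the observation that $C_S(S) = Z(S) \leq S$, so $S \cdot C_S(S) = S$, any $f \in \Aut_{O^{p'}(\FF)}(S)$ extends to itself in $\Aut_\FF(S)$, and $[f, Z(S)] \subseteq Z(S)$ because $Z(S)$ is characteristic in $S$.

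The main content is (N2): for $P \leq Q \leq S$ and $\gamma \in \Hom_\FF(Q, S)$, conjugation by $\gamma$ must restrict to a bijection between $\Hom_{O^{p'}(\FF)}(P, Q)$ and $\Hom_{O^{p'}(\FF)}(\gamma(P), \gamma(Q))$. Since conjugation by $\gamma$ is already a bijection on $\Hom_\FF$, it is enough to prove that a morphism $\phi$ lies in $O^{p'}(\FF)$ if and only if $\gamma \phi \gamma^{-1}$ does. I would exploit the characterization from Theorem~\ref{app4}: for $P, Q \in \FF^c$ and $\phi \in \Hom_\FF(P,Q)$, the condition $\phi \in O^{p'}(\FF)$ is equivalent to $\widehat\theta(\phi) = [\Id]$, where $\widehat\theta \colon \FF^c \to \bb(\Gamma_{p'}(\FF))$ is the functor of Proposition~\ref{app2}. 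Letting $\bar\gamma \colon Q \to \gamma(Q)$ denote the $\FF$-isomorphism induced by $\gamma$, functoriality of $\widehat\theta$ yields
\[
\widehat\theta(\bar\gamma \phi \bar\gamma^{-1}) = \widehat\theta(\bar\gamma) \, \widehat\theta(\phi) \, \widehat\theta(\bar\gamma)^{-1},
\]
which is the identity in $\Gamma_{p'}(\FF)$ precisely when $\widehat\theta(\phi)$ is. This settles (N2) for $P, Q \in \FF^c$.

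To pass from $\FF^c$ to arbitrary subgroups, I would invoke the bullet functor $\functor$ of \cite[Proposition~3.3]{BLO3}, which extends any $\phi \in \Hom_\FF(P, Q)$ uniquely to $\phi^\bullet \in \Hom_\FF(P^\bullet, Q^\bullet)$ with $P^\bullet, Q^\bullet$ in $\FF^q$, and similarly extends $\bar\gamma$. The extension of $\widehat\theta$ to $\FF^q$ provided by Lemma~\ref{app5} allows one to apply the same functorial argument to $\phi^\bullet$ and $\bar\gamma^\bullet$, and then to conclude (N2) by restriction.

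The main obstacle is verifying the equivalence $\phi \in O^{p'}(\FF) \iff \phi^\bullet \in O^{p'}(\FF)$. One direction is immediate, since $O^{p'}(\FF)$ is closed under restriction. For the other direction, I plan to use the saturation of $O^{p'}(\FF)$: axioms (II) and (III) applied inside $O^{p'}(\FF)$, together with the fact that $P^\bullet$ is fully $\FF$-centralized by \cite[Lemma~3.2(c)]{BLO3}, should produce an $O^{p'}(\FF)$-extension of $\phi$ to $P^\bullet$; by uniqueness of the $\FF$-extension, this must agree with $\phi^\bullet$, placing $\phi^\bullet$ in $O^{p'}(\FF)$.
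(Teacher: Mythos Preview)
Your verification of (N1), (N3), (N4), and of (N2) restricted to $\FF^c$ (and, via Lemma~\ref{app5}, to $\FF^q$) is correct and matches the paper's argument exactly: the paper also reduces (N2) to the existence of the functor $\widehat{\Theta}\colon \FF^q \to \bb(\Gamma)$ sending $O^{p'}(\FF)$-morphisms to the identity, and then invokes functoriality.

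The part that needs care is your passage from quasicentric to arbitrary subgroups. Your claim that the bullet construction lands in $\FF^q$, i.e.\ that $P^\bullet$ and $Q^\bullet$ are automatically $\FF$-quasicentric, is not justified; the functor $\functor$ produces objects of $\FF^{\bullet}$, and there is no general statement in \cite{BLO3} guaranteeing those are quasicentric. Likewise, the extension step in your last paragraph is shaky: \cite[Lemma~3.2]{BLO3} does not assert that $P^\bullet$ is fully centralized, and even granting that, axiom (II) only extends $\phi$ to $N_\phi$, which need not equal $P^\bullet$. So the bullet-functor route, as written, has a genuine gap.

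The paper is admittedly terse at this point; its implicit reduction is not via $\functor$ but via the description of $O^{p'}(\FF)$ as the smallest restrictive subcategory of $\FF$ containing the $\FF^q$-morphisms in the kernel of $\widehat\Theta$. A clean way to finish (N2) for arbitrary $P\leq Q$ is to use Lemma~\ref{app1}(ii): any $\gamma\in\Hom_{\FF}(Q,S)$ factors as a composite of restrictions of morphisms in $O^{p'}_{\ast}(\FF)\subseteq O^{p'}(\FF)$ and elements of $\Aut_{\FF}(S)$. Conjugation by the former preserves $O^{p'}(\FF)$ trivially (closure under composition), and conjugation by $\alpha\in\Aut_{\FF}(S)$ induces an autoequivalence of $\FF^q$ under which $\widehat\Theta(\alpha\,\psi\,\alpha^{-1})=\widehat\Theta(\alpha)\widehat\Theta(\psi)\widehat\Theta(\alpha)^{-1}$, hence preserves the generating set $\{\psi\in\Mor(\FF^q):\widehat\Theta(\psi)=1\}$ and its restrictive closure.
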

\begin{proof}
Since $O^{p'}(\FF)$ is a saturated fusion subsystem over $S$, conditions (N1) and (N3) in Definition~\ref{definormal} follow immediately. Also condition (N4) is immediate since $S = S \cdot C_S(S)$. Finally, condition (N2) is a consequence of the following. The fusion mapping triple $(\Gamma, \theta, \Theta)$ associated to $O^{p'}(\FF)$ corresponds to a functor $\widehat{\Theta} \colon \FF^q \to \bb(\Gamma)$ which sends the morphisms in $O^{p'}(\FF)$ to the trivial automorphism.
\end{proof}
 

\bibliographystyle{alpha}
\bibliography{Main.bib}


\end{document}